\RequirePackage{fix-cm}
\documentclass{amsart} 
\usepackage{mathptmx}
\usepackage{latexsym}
\usepackage{amsmath}
\usepackage{amssymb}
\usepackage{amsfonts} 
\usepackage{eucal} 
\usepackage{amsbsy}
\usepackage{amsthm}
\usepackage[all]{xy}

\newtheorem{thm}{Theorem}[section]
\newtheorem*{thm*}{Theorem}
\newtheorem{lemma}[thm]{Lemma}
\newtheorem{prop}[thm]{Proposition}
\newtheorem{cor}[thm]{Corollary}
\newtheorem*{cor*}{Corollary}

\theoremstyle{definition}
\newtheorem{defn}[thm]{Definition}
\newtheorem{example}[thm]{Example}

\theoremstyle{remark}
\newtheorem{remark}[thm]{Remark}

\newcommand {\Ha}    {\ensuremath{\mbox{$\mathcal{H}$}}}

\newcommand {\Va}    {\ensuremath{\mbox{$\mathcal{V}$}}}

\newcommand {\der}   {\ensuremath{\mathbf{H}}}

\newcommand {\bG}    {\ensuremath{\mathbf{G}}}

\newcommand {\bL}    {\ensuremath{\textbf{L}}}

\newcommand {\real}  {\ensuremath{\mathbb{R}}}
\newcommand {\intg}  {\ensuremath{\mathbb{Z}}}

\newcommand {\cplx}  {\ensuremath{\mathbb{C}}}
\newcommand {\rat}   {\ensuremath{\mathbb{Q}}}

\newcommand {\im}    {\operatorname{im}}
\newcommand {\rk}    {\operatorname{rk}}

\newcommand {\ip}    {\ensuremath{I^{\bar{p}}}}

\newcommand {\pt}    {\ensuremath{\operatorname{pt}}}
\newcommand {\id}    {\ensuremath{\operatorname{id}}}

\newcommand {\Harm}  {\ensuremath{\operatorname{Harm}}}

\newcommand {\ms}    {\ensuremath{\mathcal{MS}}}

\newcommand {\oms}    {\ensuremath{\Omega_{\ms}}}

\newcommand {\ft}   {\ensuremath{\operatorname{ft}}}
\newcommand {\fU}   {\ensuremath{\mathfrak{U}}}
\newcommand {\Der}   {\ensuremath{\operatorname{Der}}}
\newcommand {\Princ}  {\ensuremath{\operatorname{Princ}}}
\newcommand {\cd}   {\ensuremath{\operatorname{cd}}}

\newcommand {\oR}   {\ensuremath{\overline{R}}}
\newcommand {\oE}   {\ensuremath{\overline{E}}}
\newcommand {\oX}   {\ensuremath{\overline{X}}}
\newcommand {\oJ}   {\ensuremath{\overline{J}}}
\newcommand {\oF}   {\ensuremath{\overline{F}}}

\begin{document}


\title{Isometric Group Actions and the Cohomology of Flat Fiber Bundles}

\author{Markus Banagl}

\address{Mathematisches Institut, Universit\"at Heidelberg,
  Im Neuenheimer Feld 288, 69120 Heidelberg, Germany}

\email{banagl@mathi.uni-heidelberg.de}

\thanks{The author was in part supported by a research grant of the
 Deutsche Forschungsgemeinschaft.}

\date{May, 2011}

\subjclass[2010]{55R20, 55R70, 55N91}

\keywords{Serre spectral sequence, cohomology of fiber bundles, flat bundles, isometric group actions,
equivariant cohomology, aspherical manifolds, discrete torsionfree transformation groups, Euler class}


\begin{abstract}
Using methods originating in the theory of intersection spaces, specifically a
de Rham type description of the real cohomology of these spaces by a complex
of global differential forms, we show that the Leray-Serre spectral sequence
with real coefficients of a flat fiber bundle of smooth manifolds collapses if
the fiber is Riemannian and the structure group acts isometrically. The proof is
largely topological and does not need a metric on the base or total space.
We use this result to show further that if the fundamental group of a smooth
aspherical manifold acts isometrically on a Riemannian manifold, then the equivariant real
cohomology of the Riemannian manifold can be computed as a direct sum over the cohomology
of the group with coefficients in the (generally twisted) cohomology modules of
the manifold. Our results have consequences for the Euler class of flat sphere bundles.
Several examples are discussed in detail, for instance an action
of a free abelian group on a flag manifold.
\end{abstract}

\maketitle


\tableofcontents


\section{Introduction}

A fiber bundle with given structure group is \emph{flat}, if the transition
functions into the structure group are locally constant.
We show that the method of intersection spaces introduced in
\cite{banagl-intersectionspaces}, specifically the de Rham description
of intersection space cohomology given in \cite{banagl-derhamintspace},
implies the following result, by a concise and topological proof:

\begin{thm*} (See Theorem \ref{thm.main1}.)
Let $B, E$ and $F$ be closed, smooth manifolds with $F$ oriented.
Let $\pi: E\to B$ be a flat, smooth fiber bundle with
structure group $H$. If \\

(1) $H$ is a Lie group acting properly (and smoothly) on $F$ (for example $H$ compact), \\

\noindent or \\

(2) $F$ is Riemannian and the images of the monodromy homomorphisms \\
\mbox{ } \hspace{.6cm} $\pi_1 (B,b)\to H$ act by isometries on $F$, where the base-point $b$ ranges \\
\mbox{ } \hspace{.6cm} over the connected components of $B$, \\

\noindent then the cohomological Leray-Serre
spectral sequence of $\pi$ for real coefficients collapses at the $E_2$-term.
In particular, the formula
\[ H^k (E;\real) \cong \bigoplus_{p+q=k} H^p (B; \der^q (F;\real)) \]
holds, where the $\der^q (F;\real)$ are local coefficient systems on $B$
induced by $\pi$, whose groups are the real cohomology groups of the fiber.
\end{thm*}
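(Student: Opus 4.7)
The plan is to produce, on the total space $E$, a small sub-complex of the de Rham complex $\Omega^\bullet(E)$ that computes $H^\bullet(E;\real)$ and whose associated Leray-Serre filtration splits tautologically.

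First I would reduce (1) to (2). A proper smooth action of a Lie group on a smooth manifold always admits an invariant Riemannian metric (built from slices, whose stabilizers are compact, via a partition of unity on the orbit space). So in both cases we may assume $H$ acts on $(F,g)$ by isometries.

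Next I would apply fiberwise Hodge theory. On the closed Riemannian manifold $(F,g)$, the Laplace--Beltrami operator $\Delta_g$ commutes with isometries, so the finite-dimensional subspace $\Harm^q(F)\subset\Omega^q(F)$ of harmonic $q$-forms is $H$-stable and canonically isomorphic to $H^q(F;\real)$. Since the cocycle of $\pi$ takes values in $H$, it assembles the spaces $\Harm^q(F)$ into a flat real vector bundle over $B$ whose associated local system is $\der^q(F;\real)$.

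Then I would construct the small complex. Using the flat Ehresmann connection of $\pi$, every harmonic fiber form has a canonical horizontal extension to $\pi^{-1}(U)\cong U\times F$ over any simply-connected trivializing open $U\subset B$; wedging with pulled-back base forms yields a sub-complex $\bL^\bullet(E)\subset\Omega^\bullet(E)$ whose degree-$k$ term is $\bigoplus_{p+q=k}\Omega^p(B;\der^q(F;\real))$. Because harmonic forms are closed and the extension is parallel, the exterior derivative on $\bL^\bullet(E)$ only differentiates the base coefficients, so
\[ H^\bullet(\bL^\bullet(E))\cong\bigoplus_{p+q=\bullet} H^p(B;\der^q(F;\real)), \]
and the natural filtration by base-degree already collapses at the $E_2$-term on the nose.

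The main obstacle is showing that the inclusion $\bL^\bullet(E)\hookrightarrow\Omega^\bullet(E)$ is a filtered quasi-isomorphism; this is precisely where the de Rham description from \cite{banagl-derhamintspace} enters. Fiberwise the Green operator $G_\Delta$ gives a cochain homotopy $K$ with $dK+Kd=\id-P$, where $P$ projects onto fiberwise harmonic forms. The isometric hypothesis is what forces $K$ and $P$ to commute with the monodromy, so they patch to globally defined operators on $E$ and exhibit $\bL^\bullet(E)$ as a deformation retract of $\Omega^\bullet(E)$ compatible with the Leray--Serre filtration. A comparison of spectral sequences then identifies the Serre spectral sequence of $\pi$ with the trivially collapsing one of $\bL^\bullet(E)$, which yields the stated collapse and the direct-sum formula.
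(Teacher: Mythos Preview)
Your argument is correct, but it takes a genuinely different route from the paper's own proof. You build the ``smallest'' model $\bL^\bullet(E)\cong\bigoplus_{p,q}\Omega^p(B;\der^q(F))$ using fiberwise \emph{harmonic} forms and then exhibit it directly as a filtered deformation retract of $\Omega^\bullet(E)$ via the Green-operator homotopy $K=d_F^\ast G_\Delta$; flatness makes the horizontal--vertical bigrading global, and the isometric hypothesis makes $K$ and $P$ patch. Since the induced differential on $\bL^\bullet(E)$ is purely horizontal, its spectral sequence is visibly degenerate, and a filtered comparison finishes. By contrast, the paper works with the larger complex $\Omega^\bullet_\ms(B)$ of multiplicatively structured forms (arbitrary $\gamma_j\in\Omega^\bullet(F)$, not just harmonic), compares it to $\Omega^\bullet(E)$ through the \v{C}ech--de Rham double complex, and then proves $d_r=0$ for $r\ge 2$ by an induction that pits the spectral sequence of $K_\ms$ against those of the \emph{cotruncated} subcomplexes $K_{\ge t}$; the isometric hypothesis enters only to make $\ker d^\ast$ (hence $\ft_{\ge t}$) invariant under the transition functions. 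Your approach is more analytic and more direct; the paper's is more combinatorial, closer to the intersection-space truncation formalism, and---as the introduction hints---better positioned for a PL generalization via Sullivan forms, where no Green operator is available. One small remark: your appeal to \cite{banagl-derhamintspace} for the filtered quasi-isomorphism is misplaced---that reference develops the $\Omega^\bullet_\ms$ machinery used in the paper's proof, whereas your Green-operator retraction is self-contained and needs nothing from it.
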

By a result of R. Palais (\cite{palaisexistslices}),
condition (1) implies that $F$ can be endowed with a Riemannian metric
such that (2) holds. The isometry group of an $m$-dimensional, compact, 
Riemannian manifold
is a compact Lie group of dimension at most $\frac{1}{2}m(m+1)$.
Thus (2) implies (1) by taking $H$ to be the isometry group of $F$,
and the two conditions are essentially equivalent.
In Section \ref{sec.nonisometric}, we provide an example of a flat, smooth
fiber bundle whose structure group does not act isometrically for any
Riemann metric on the fiber, and whose spectral sequence does not collapse
at $E_2$. Hence, conditions (1), (2) in the theorem cannot be deleted
without substitution. \\

Let us indicate two immediate consequences of the theorem.
Let $\pi: E\to B$ be an oriented, flat sphere bundle with structure group
$\operatorname{SO}(n)$ over a closed, smooth manifold $B$. 
The transgression
\[ d_n: E^{0,n-1}_n = H^0 (B;\der^{n-1}(S^{n-1};\real)) = H^{n-1}(S^{n-1};\real) \]
\[ \hspace{3cm}\longrightarrow E^{n,0}_n = H^n (B;\der^0 (S^{n-1};\real)) = H^{n} (B;\real) \]
sends a certain element $\sigma \in E^{0,n-1}_1$, which corresponds to 
local angular forms on the sphere bundle, and which survives to $E_n$,
to the Euler class of the sphere bundle. Since $\operatorname{SO}(n)$ is
compact, the spectral sequence of the bundle collapses at $E_2$, by the theorem.
Thus $d_n=0$ and we obtain the following corollary:
\begin{cor*}
The real Euler class of an oriented, flat, linear sphere bundle (structure group
$\operatorname{SO}(n)$) over a closed, smooth manifold is zero.
\end{cor*}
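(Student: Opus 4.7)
The plan is to deduce the corollary directly from the main theorem together with the classical transgression description of the Euler class. The content is essentially already summarized in the paragraph preceding the corollary, and the job here is mainly to make the three steps explicit.

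First I would verify that the hypotheses of the main theorem are met. The fiber $S^{n-1}$ is a closed, smooth, oriented manifold; the base $B$ is closed and smooth by assumption; and the bundle $\pi\colon E\to B$ is a flat, smooth fiber bundle with structure group $\operatorname{SO}(n)$, acting on $S^{n-1}$ via the standard linear action. Since $\operatorname{SO}(n)$ is a compact Lie group, this action is automatically proper and smooth, so condition (1) of the theorem holds. Applying the theorem, the cohomological Leray–Serre spectral sequence of $\pi$ with real coefficients collapses at $E_2$; in particular, every differential $d_r$ with $r\ge 2$ vanishes.

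Second, I would recall the transgression description of the Euler class sketched in the excerpt. The fundamental class generator of $H^{n-1}(S^{n-1};\real)$ is represented on the bundle by a global angular form (the classical Chern–Weil/Thom form), which lives in $E_2^{0,n-1}=H^0(B;\der^{n-1}(S^{n-1};\real))$ because the bundle is oriented, hence the coefficient system $\der^{n-1}(S^{n-1};\real)$ is the constant system $\real$. This class $\sigma$ is a permanent cycle of the bundle up to the $E_n$-page, and its transgression $d_n(\sigma)\in E_n^{n,0}=H^n(B;\real)$ is, by a standard theorem of Borel and Serre (equivalently the obstruction-theoretic definition via the Gysin sequence), precisely the real Euler class $e(\pi)$ of the oriented sphere bundle.

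Combining these two facts yields the corollary at once: since $d_n=0$ by the collapse result from the first step, one has $e(\pi)=d_n(\sigma)=0$ in $H^n(B;\real)$. There is really no obstacle here beyond invoking the main theorem; the only point requiring care is the identification of the transgression of the angular form with the Euler class, which is a standard fact and does not need to be reproved.
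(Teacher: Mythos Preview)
Your proposal is correct and follows essentially the same argument as the paper: apply the main theorem via condition~(1) (compactness of $\operatorname{SO}(n)$) to obtain collapse at $E_2$, then use the standard identification of the Euler class with the transgression $d_n(\sigma)$ of the angular-form class to conclude $e(\pi)=0$. You have simply made explicit the three steps that the paper sketches in the paragraph preceding the corollary.
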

We thus obtain a topological proof, without using the Chern-Weil theory of curvature forms,
of a result closely related to the results of \cite[Section 4]{milnorconncurvzero},
which do rely on Chern-Weil theory.

By \cite{smillie}, there is a flat manifold $M^{2n}$ with nonzero Euler characteristic for every
$n>1$. If the associated tangent sphere bundle of $M$, with structure group 
$\operatorname{SO}(2n)$, were flat, then the Euler class of such a sphere bundle would
vanish according to the above corollary. We arrive at:
\begin{cor*}
For every $n>1$, there is a flat manifold $M^{2n}$ whose linear tangent sphere
bundle is not flat.
\end{cor*}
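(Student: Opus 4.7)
The plan is to argue by contradiction, invoking Smillie's theorem to supply a flat manifold with nonvanishing Euler characteristic and then contradicting the vanishing statement of the previous corollary. First, by \cite{smillie}, choose for each $n>1$ a flat closed Riemannian manifold $M^{2n}$ with $\chi(M) \neq 0$. Because $M$ is flat and orientable (being parallelizable, or at least admitting an orientation double cover to which the argument transfers), the tangent bundle $TM$ is an oriented real rank $2n$ vector bundle with structure group $\operatorname{SO}(2n)$.

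Next, I would relate the Euler characteristic to a real cohomology class. By the Hopf/Chern--Gauss--Bonnet theorem, $\langle e(TM),[M]\rangle = \chi(M) \neq 0$, so the real Euler class $e(TM) \in H^{2n}(M;\real)$ is nonzero. Recall that the Euler class of the oriented rank $2n$ vector bundle $TM$ coincides with the Euler class of its associated unit tangent sphere bundle $\pi\colon S(TM)\to M$, viewed as an oriented $\operatorname{SO}(2n)$-bundle of spheres $S^{2n-1}$. Hence the real Euler class of $S(TM)$ as a linear oriented sphere bundle is nonzero as well.

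Now suppose for contradiction that $S(TM)$ is flat as a linear sphere bundle, i.e.\ admits locally constant $\operatorname{SO}(2n)$-valued transition functions. Then the preceding corollary applies: the real Euler class of $S(TM)$ must vanish. This contradicts the nonvanishing established in the previous step, and therefore the linear tangent sphere bundle of $M$ cannot be flat.

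I do not anticipate a serious obstacle; the only subtle point is matching conventions so that ``linear tangent sphere bundle'' in the statement is precisely an oriented $\operatorname{SO}(2n)$-sphere bundle, to which the previous corollary directly applies, and verifying that flatness of $TM$ as a vector bundle plays no role in the argument --- only the hypothetical flatness of $S(TM)$ is used, and only the genuine flatness of the manifold $M$ (to invoke Smillie) is needed on the input side.
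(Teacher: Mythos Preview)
Your proposal is correct and follows essentially the same argument as the paper: invoke Smillie's flat manifold $M^{2n}$ with $\chi(M)\neq 0$, note that nonzero Euler characteristic forces a nonzero real Euler class of the tangent sphere bundle, and derive a contradiction from the previous corollary under the assumption that this $\operatorname{SO}(2n)$-sphere bundle is flat.

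One small correction to your parenthetical: a flat Riemannian manifold is \emph{not} generally parallelizable---indeed, if it were, its Euler characteristic would vanish and Smillie's examples could not exist. The tangent bundle of a flat manifold is a flat vector bundle, which is a much weaker condition. Your fallback to the orientation double cover is the right way to handle orientability if needed (the cover is still flat and has Euler characteristic $2\chi(M)\neq 0$); the paper simply takes for granted that the tangent sphere bundle has structure group $\operatorname{SO}(2n)$.
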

For the tangent bundles of these manifolds $M^{2n},$ the structure group
$\operatorname{GL}^+ (2n,\real)$ can be reduced to $\operatorname{GL}^+ (2n,\real)$
with the discrete topology and can also be reduced to $\operatorname{SO}(2n)$
(by using a metric), but there are no further reductions to $\operatorname{SO}(2n)$
with the discrete topology.

Our theorem may be applied to the equivariant cohomology $H^\bullet_G (-;\real)$
of certain discrete groups $G$:

\begin{thm*} (See Theorem \ref{thm.equivcohom}.)
Let $F$ be an oriented, closed, smooth manifold and $G$ a discrete group,
whose Eilenberg-MacLane space $K(G,1)$ may be taken to be a closed,
smooth manifold. If for a smooth action of $G$ on $F$, \\

(1) the action factors through a proper, smooth Lie group action, \\

\noindent or \\

(2) $F$ is Riemannian and $G$ acts isometrically on $F$, \\

\noindent then there is a decomposition
\[ H^k_G (F;\real) \cong \bigoplus_{p+q=k} H^p (G; \der^q (F;\real)), \]
where the $\der^q (F;\real)$ are $G$-modules determined by the action.
\end{thm*}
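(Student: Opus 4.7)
The plan is to reduce this statement to Theorem \ref{thm.main1} by realizing $H^\bullet_G(F;\real)$ as the cohomology of the total space of a flat smooth fiber bundle over a closed manifold model of $K(G,1)$. First I would fix a closed smooth manifold $B$ with $B \simeq K(G,1)$, and let $\tilde{B} \to B$ denote its universal cover, so that $\tilde{B}$ is contractible and $G = \pi_1(B)$ acts smoothly, freely and properly discontinuously by deck transformations. Combining with the given smooth action of $G$ on $F$, form the smooth manifold
\[ E := (\tilde{B} \times F)/G \]
with the diagonal $G$-action, together with the projection $\pi : E \to B$, $[\tilde{b},x] \mapsto [\tilde{b}]$. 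This is a smooth fiber bundle with fiber $F$ whose transition cocycle takes values in the discrete group $G$, hence is locally constant; in particular $\pi$ is flat.

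Next I would identify $H^\bullet(E;\real)$ with $H^\bullet_G(F;\real)$. Since $\tilde{B}$ is contractible, the projection $\tilde{B}\times F \to F$ is a $G$-equivariant homotopy equivalence, so $E = \tilde{B} \times_G F$ is a model for the Borel construction, yielding $H^\bullet(E;\real) \cong H^\bullet_G(F;\real)$. Moreover, the monodromy homomorphism $\pi_1(B,b) \to \operatorname{Aut}(H^q(F;\real))$ of the bundle $\pi$ coincides with the action of $G$ on $H^q(F;\real)$ induced by its action on $F$, so under the standard identification $H^p(B;-) = H^p(G;-)$ for $B = K(G,1)$, the local system $\der^q(F;\real)$ on $B$ translates precisely into the $G$-module $\der^q(F;\real)$ of the statement.

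Finally I would enlarge the structure group of $\pi$ to apply Theorem \ref{thm.main1}. In case (1), by hypothesis the action factors through a proper smooth action of a Lie group $H$, so $\pi$ admits $H$ as a structure group while remaining flat (the transition cocycle still takes values in the discrete subgroup $G \subset H$, hence remains locally constant), and hypothesis (1) of Theorem \ref{thm.main1} is satisfied. In case (2), the isometry group $H = \operatorname{Isom}(F)$ of the closed Riemannian manifold $F$ is a compact Lie group, the monodromy factors through $G \hookrightarrow H$, and hypothesis (2) of Theorem \ref{thm.main1} holds. The theorem then gives
\[ H^k(E;\real) \;\cong\; \bigoplus_{p+q=k} H^p(B; \der^q(F;\real)), \]
and combining with the two identifications above yields the desired decomposition of $H^k_G(F;\real)$.

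The only real obstacle is the bookkeeping of structure groups: one must verify that enlarging the discrete structure group $G$ to the Lie overgroup $H$ preserves both smoothness and flatness of $\pi$, and that the resulting local coefficient system on $B$ agrees, as a $G$-module via $\pi_1(B) = G$, with the $G$-module structure on $H^q(F;\real)$ coming from the original action on $F$. Both are immediate once one writes down the transition cocycle, so no serious analytic work is required beyond Theorem \ref{thm.main1} itself.
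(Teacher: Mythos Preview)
Your proposal is correct and follows essentially the same route as the paper: model $BG$ by a closed aspherical manifold $B$, take $EG = \widetilde{B}$, identify the Borel construction $F \times_G \widetilde{B}$ with the total space of a flat $F$-bundle over $B$, and invoke Theorem~\ref{thm.main1} to collapse the Leray--Serre spectral sequence at $E_2$. The only cosmetic difference is that the paper treats case~(1) by first invoking Palais to produce an invariant metric on $F$ (thereby reducing to case~(2)), whereas you appeal directly to hypothesis~(1) of Theorem~\ref{thm.main1}; both are fine. One small caveat: the map $G \to \operatorname{Isom}(F)$ (or $G \to H$) need not be injective, so writing ``$G \hookrightarrow H$'' is inaccurate, but nothing in the argument actually uses injectivity.
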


The assumption on $G$ can be paraphrased as requiring $G$ to be the
fundamental group of a closed, (smooth) aspherical manifold. Examples of
such groups include finitely generated free abelian groups, the fundamental
groups of closed manifolds with non-positive sectional curvature, the
fundamental groups of surfaces other than the real projective plane,
infinite fundamental groups of irreducible, closed, orientable
$3$-manifolds, torsionfree discrete subgroups of almost connected
Lie groups, and certain groups arising from Gromov's hyperbolization technique.
Only torsionfree $G$ can satisfy the hypothesis of the above theorem.
Note that in (1) we do not assume that the image of $G$ is closed in the intermediary
Lie group, nor that one can identify $G$ with a subgroup of the
intermediary group, and in (2)
we do not assume that the image of $G$ is closed in the isometry group of $F$,
nor that one can identify $G$ with a subgroup of the isometries.
For instance, the integers $G=\intg$ with $K(\intg,1)=S^1,$ the circle,
act isometrically and freely (and ergodically) on the unit circle by powers of a rotation by
an angle which is an irrational multiple of $2\pi$. Theorem \ref{thm.equivcohom} thus
emphasizes the discrete dynamical systems viewpoint. This example
also satisfies (1), since the powers of the irrational rotation are a subgroup
(which is not closed) of the compact Lie group $S^1$ which acts on itself
by (e.g. left) multiplication. The actions to which the theorem applies
need not be proper, nor need our $G$-spaces be $G$-CW complexes, but
in many geometric situations, nonproper actions factor through proper actions
in a natural way. (The above $\intg$-action on the circle is not proper, since
e.g. the orbit space is not Hausdorff and orbits are not closed in $S^1$.)
Section \ref{sec.equivcohom} contains a number of corollaries to,
applications of, and examples illustrating Theorem \ref{thm.equivcohom}.
For instance, if an integral Heisenberg group $\mathfrak{H}_n$ acts
isometrically on an oriented, closed, connected, Riemannian manifold $F$, then
\[ \rk H^k_{\mathfrak{H}_n} (F) \geq 2,~ k= 1,2, \]
and $H^3_{\mathfrak{H}_n} (F;\real)$ does not vanish (Corollary \ref{cor.5}). \\

In Section \ref{sec.flag}, we illustrate the use of our results by calculating
explicitly an equivariant cohomology group of a certain action of a free abelian
group on the flag manifold $F_8 = U(8)/T^8$ of real dimension $56$. The outcome
is verified by an alternative, logically independent, computation of the same group,
relying on a recursive scheme introduced in the appendix (Section \ref{appendix}).
This recursive scheme is only available for finitely generated free abelian groups. \\

The proof of Theorem \ref{thm.main1} relies on the complex of multiplicatively structured
forms constructed in \cite{banagl-derhamintspace}, and the fact that fiberwise 
truncation \emph{in both directions},
yielding a \emph{subcomplex} in both cases (and not a quotient
complex in one case), can be performed on the multiplicatively structured forms.
Such truncations, carried out homotopy theoretically on the space level
(generally a much harder problem), are also required to build the
intersection space $\ip X$ of a stratified pseudomanifold $X$.
The only analytic tool we need to prove the theorem is the classical
Hodge decomposition --- hence the assumption that the fiber must be
closed, oriented and Riemannian.
Otherwise, our argument is purely topological in nature, using \v{C}ech complexes.
No connections, tensor fields, etc. are used on the base or total space;
in particular we need not assume that $B$ and $E$ are Riemannian.
The combinatorial nature of our proof may lend itself to an extension of
our theorem to nonsmoothable PL manifolds $B$ and $E$ and flat 
PL bundles $\pi: E\to B$. In this situation, smooth forms have to be replaced
by Sullivan's complex $\widetilde{A}^\bullet (K)$ of piecewise
$C^\infty$-forms on a simplicial complex $K$. We may also recall at this point
that there are
closed, aspherical PL manifolds which are not homotopy equivalent to
closed, smooth manifolds, \cite{davishausmann}.\\

Our Theorem \ref{thm.main1} is closely related to results of \cite{dainonmult} and 
\cite{muellerjoern}. Dai and M\"uller work with Riemannian $E,B$
and a Riemannian submersion $\pi: E\to B$. Their metric on $F$ is allowed, 
to a certain extent, to vary with points in the base.
Using Dai's spectral sequence degeneration result from
\cite{dainonmult}, M\"uller proves that if a flat Riemannian submersion $\pi$
is locally a warped product, or has totally geodesic fibers, then the
spectral sequence of $\pi$ collapses at $E_2$. Let us put this into perspective.
A Riemannian submersion whose total space is complete is a locally trivial
fiber bundle. The geometry of a Riemannian submersion is largely governed
by two tensor fields $T$ and $A$. Let $\Va$ and $\Ha$ denote the vertical
and horizontal distributions, respectively, that is, at each point $x\in E$ there is
an orthogonal decomposition $\Va_x \oplus \Ha_x = T_x E$ of the tangent space
and $\Va_x$ is tangent to the fiber over $\pi (x)$. Let $\Va$ and $\Ha$ also
denote the projection of a vector onto $\Va_x$ and $\Ha_x,$ respectively.
With $\nabla$ the Levi-Civita connection of the metric on $E$, one sets for
vector fields $V,W$ on $E$,
\[ T_V W = \Ha \nabla_{\Va V} \Va W + \Va \nabla_{\Va V} \Ha W \]
and
\[ A_V W = \Ha \nabla_{\Ha V} \Va W + \Va \nabla_{\Ha V} \Ha W. \]
If $V,W$ are vertical, then $T_V W$ is the second fundamental form of each
fiber. The identical vanishing $T=0$ is equivalent to each fiber being totally
geodesic, that is, geodesics in the fibers are also geodesics for $E$.
This implies that all fibers are isometric and the holonomy group
(which agrees with the structure group of $\pi$, at least when $E$ is 
complete) is a subgroup of the isometry group of the fiber. The identical
vanishing $A=0$ is equivalent to the integrability of $\Ha$. If $\Ha$ is
integrable, then $E$ is locally isometric to $B\times F$ with a metric
$g_B + g_{F,b},$ where $g_B$ is a metric on $B$ and $\{ g_{F,b} \}_{b\in B}$
is a smooth family of metrics on $F$. In this situation, the horizontal
foliation yields a flat (Ehresmann) connection for $\pi$. For a flat connection,
the holonomy along a path depends only on the homotopy class of the path.
Indeed, for path-connected $B$, flat bundles with structure group $G$
acting effectively on a fiber $G$ are in one-to-one correspondence with
homomorphisms $\pi_1 (B)\to G$. In particular, one may take $G$ to be discrete.
A \emph{warped} product is a Riemannian manifold $B\times F$, whose
metric has the form $g_B + fg_F$, where $g_F$ is a fixed metric on $F$ and
$f:B\to \real$ is a positive function. If the Riemannian submersion $\pi$ is
locally a warped product, then $\Ha$ is integrable.
When $T \equiv 0$ and $A \equiv 0$, the total space $E$ is locally a product
$g_B + g_F$, where $g_F$ does not depend on points in the base. 
From \cite{dainonmult}, M\"uller isolates a technical condition, called condition ``(B)''
in \cite[Section 2.3]{muellerjoern}, which for a flat Riemannian submersion
implies collapse of the spectral sequence at $E_2$. He shows that this condition
is satisfied for locally warped products as well as for totally geodesic fibers,
while Dai shows that collapse at $E_2$ happens for
flat Riemannian submersions satisfying (B).

\section{A Complex of Multiplicatively Structured Forms on Flat Bundles}
\label{sec.fibharmflat}

This section reviews the multiplicatively structured form model introduced in
\cite{banagl-derhamintspace}. The proofs of the cited results can be found in that paper.
For a smooth manifold $M$, $\Omega^\bullet (M)$ denotes the de Rham complex
of smooth differential forms on $M$.
Let $F$ be a closed, oriented, Riemannian manifold and $\pi: E\rightarrow B$
a flat, smooth fiber bundle over the closed, smooth $n$-dimensional base manifold $B$ with
fiber $F$ and structure group the isometries of $F$. An open cover of an
$n$-manifold is called \emph{good}, if all nonempty finite intersections of
sets in the cover are diffeomorphic to $\real^n$. Every smooth manifold
has a good cover and if the manifold is compact, then the cover can be
chosen to be finite. Let $\mathfrak{U} = \{ U_\alpha \}$ be a finite good open
cover of the base $B$ such that $\pi$ trivializes with respect to $\mathfrak{U}$.
Let $\{ \phi_\alpha \}$ be a system of local trivializations, that is, the
$\phi_\alpha$ are diffeomorphisms such that
\[ \xymatrix{
\pi^{-1} (U_\alpha) \ar[rr]^{\phi_\alpha} \ar[rd]_{\pi|} 
& & U_\alpha \times F \ar[ld]^{\pi_1} \\
& U_\alpha & } \]
commutes for every $\alpha$. Flatness implies that the transition functions
\[ \rho_{\beta \alpha} = \phi_\beta| \circ \phi_\alpha|^{-1}:
 (U_\alpha \cap U_\beta)\times F \longrightarrow
  \pi^{-1} (U_\alpha \cap U_\beta) \longrightarrow 
  (U_\alpha \cap U_\beta)\times F \]
are of the form
\[ \rho_{\beta \alpha} (t,x) = (t, g_{\beta \alpha} (x)). \]
The maps $g_{\beta \alpha}: F\rightarrow F$ are isometries.

If $X$ is a topological
space, let $\pi_2: X\times F \rightarrow F$ denote the second-factor
projection. Let $V \subset B$ be a $\mathfrak{U}$-small open subset
and suppose that $V\subset U_{\alpha}$.
\begin{defn}
A differential form $\omega \in \Omega^q (\pi^{-1} (V))$ is called
\emph{$\alpha$-multiplicatively structured}, if it has the form
\[ \omega = \phi^\ast_\alpha \sum_j \pi^\ast_1 \eta_j \wedge
  \pi^\ast_2 \gamma_j,~ 
 \eta_j \in \Omega^\bullet (V),~ \gamma_j \in \Omega^\bullet (F) \]
(finite sums).
\end{defn}

Flatness is crucial for the following basic lemma.
\begin{lemma} \label{lem.abinvariant}
Suppose $V \subset U_\alpha \cap U_\beta.$ Then $\omega$ is
$\alpha$-multiplicatively structured if, and only if, $\omega$ is $\beta$-multiplicatively structured.
\end{lemma}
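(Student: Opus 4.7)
The plan is to use the explicit flatness description of the transition function $\rho_{\beta\alpha}$ to transport a multiplicatively structured expression from the $\alpha$-trivialization to the $\beta$-trivialization without destroying the product form. By symmetry in $\alpha$ and $\beta$, it suffices to prove one implication.

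First I would write down, on $\pi^{-1}(V)$, the identity $\phi_\alpha = \rho_{\alpha\beta} \circ \phi_\beta$ (that is, invert the transition relation $\rho_{\beta\alpha} = \phi_\beta \circ \phi_\alpha^{-1}$). Then for an $\alpha$-multiplicatively structured form
\[
\omega = \phi_\alpha^\ast \sum_j \pi_1^\ast \eta_j \wedge \pi_2^\ast \gamma_j,
\]
pulling back through $\phi_\beta$ gives
\[
\omega = \phi_\beta^\ast \rho_{\alpha\beta}^\ast \sum_j \pi_1^\ast \eta_j \wedge \pi_2^\ast \gamma_j.
\]
The crux is now to analyze $\rho_{\alpha\beta}^\ast$ on each summand.

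The key calculation uses flatness: since $\rho_{\alpha\beta}(t,x) = (t, g_{\alpha\beta}(x))$ with $g_{\alpha\beta}$ independent of $t$, one has $\pi_1 \circ \rho_{\alpha\beta} = \pi_1$ and $\pi_2 \circ \rho_{\alpha\beta} = g_{\alpha\beta} \circ \pi_2$. Consequently
\[
\rho_{\alpha\beta}^\ast (\pi_1^\ast \eta_j \wedge \pi_2^\ast \gamma_j) = \pi_1^\ast \eta_j \wedge \pi_2^\ast (g_{\alpha\beta}^\ast \gamma_j).
\]
Setting $\eta'_j := \eta_j \in \Omega^\bullet(V)$ and $\gamma'_j := g_{\alpha\beta}^\ast \gamma_j \in \Omega^\bullet(F)$ (the latter because $g_{\alpha\beta}$ is a self-diffeomorphism of $F$), we obtain
\[
\omega = \phi_\beta^\ast \sum_j \pi_1^\ast \eta'_j \wedge \pi_2^\ast \gamma'_j,
\]
which is $\beta$-multiplicatively structured. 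Swapping the roles of $\alpha$ and $\beta$ gives the converse.

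The only potential obstacle is making sure that $\rho_{\alpha\beta}^\ast$ preserves the product form, and this is exactly where flatness enters: if $g_{\beta\alpha}$ were allowed to depend on the base point $t$, then $\pi_2 \circ \rho_{\alpha\beta}$ would no longer factor through $\pi_2$, and pulling back $\pi_2^\ast \gamma_j$ would produce cross terms involving differentials of $g_{\alpha\beta}$ in the $V$-direction. These would no longer be expressible as $\pi_2^\ast$ of a form on $F$, and the multiplicative structure would be lost. Flatness is precisely what trivializes this obstruction, so the proof reduces to the one-line observation above.
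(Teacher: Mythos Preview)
Your proof is correct and is essentially the same as the paper's: the paper simply states the transformation law
\[
\phi^\ast_\alpha \sum_j \pi^\ast_1 \eta_j \wedge \pi^\ast_2 \gamma_j
= \phi^\ast_\beta \sum_j \pi^\ast_1 \eta_j \wedge \pi^\ast_2 (g^\ast_{\alpha \beta} \gamma_j)
\]
as the reason, and you have supplied the derivation of this identity from the factorization $\phi_\alpha = \rho_{\alpha\beta}\circ\phi_\beta$ together with the flatness relations $\pi_1\circ\rho_{\alpha\beta}=\pi_1$ and $\pi_2\circ\rho_{\alpha\beta}=g_{\alpha\beta}\circ\pi_2$.
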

The lemma follows from the \emph{transformation law}
\begin{equation} \label{equ.transformlaw}
\phi^\ast_\alpha \sum_j \pi^\ast_1 \eta_j \wedge
  \pi^\ast_2 \gamma_j 
 =  \phi^\ast_\beta \sum_j \pi^\ast_1 \eta_j \wedge
  \pi^\ast_2 (g^\ast_{\alpha \beta} \gamma_j).
\end{equation}

The lemma shows that the property of being
multiplicatively structured over $V$ is invariantly defined, independent
of the choice of $\alpha$ such that $V\subset U_\alpha$. 
We will use the shorthand notation
\[ U_{\alpha_0 \ldots \alpha_p} = U_{\alpha_0} \cap \cdots \cap U_{\alpha_p} \]
for multiple intersections. (Repetitions of indices are allowed.) 
Since $\mathfrak{U}$ is a good cover, every
$U_{\alpha_0 \ldots \alpha_p}$ is diffeomorphic to $\real^n,$ $n = \dim B$.
A linear subspace, the subspace of \emph{multiplicatively structured forms}, of $\Omega^q (E)$
is obtained by setting
\[ \Omega^q_{\ms} (B) = \{ \omega \in \Omega^q (E) ~|~ \omega|_{\pi^{-1} U_\alpha} \text{ is }
   \alpha \text{-multiplicatively structured for all } \alpha \}. \]
The exterior derivative $d: \Omega^q (E) \rightarrow \Omega^{q+1}
(E)$ restricts to a differential
\[ d: \Omega^q_\ms (B) \longrightarrow \Omega^{q+1}_\ms (B). \]
Thus $\Omega^\bullet_\ms (B) \subset \Omega^\bullet (E)$ is a 
subcomplex. We shall eventually see that this inclusion is a quasi-isomorphism, that is,
induces isomorphisms on cohomology. For any $\alpha$, set
\[ \Omega^\bullet_{\ms} (U_\alpha) = \{ \omega \in \Omega^\bullet (\pi^{-1} U_\alpha) 
  ~|~ \omega \text{ is } \alpha \text{-multiplicatively structured } \}. \]
Let $r$ denote the obvious restriction map
\[ r: \Omega^\bullet_\ms (B) \longrightarrow \prod_\alpha \Omega^\bullet_\ms (U_\alpha). \]
If $p$ is positive, then we set
\[ \Omega^\bullet_{\ms} (U_{\alpha_0 \ldots \alpha_p}) = \{ \omega \in \Omega^\bullet (\pi^{-1} U_{\alpha_0 \ldots \alpha_p}) 
  ~|~ \omega \text{ is } \alpha_0 \text{-multiplicatively structured } \}. \]
Lemma \ref{lem.abinvariant} implies that for any $1\leq j \leq p,$
\[ \Omega^\bullet_{\ms} (U_{\alpha_0 \ldots \alpha_p}) = \{ \omega \in \Omega^\bullet (\pi^{-1} U_{\alpha_0 \ldots \alpha_p}) 
  ~|~ \omega \text{ is } \alpha_j \text{-multiplicatively structured } \}. \]
In particular, if $\sigma$ is any permutation of $0,1,\ldots, p$, then
\[ \Omega^\bullet_\ms (U_{\alpha_{\sigma (0)} \ldots \alpha_{\sigma (p)}}) =
  \Omega^\bullet_\ms (U_{\alpha_0 \ldots \alpha_p}). \]
The components of an element 
\[ \xi \in \prod_{\alpha_0, \ldots, \alpha_p} \Omega^\bullet_\ms (U_{\alpha_0 \ldots
\alpha_p}) \]
will be written as
\[ \xi_{\alpha_0 \ldots \alpha_p} \in \Omega^\bullet_\ms (U_{\alpha_0 \ldots
\alpha_p}). \]
We impose the antisymmetry restriction 
$\xi_{\ldots \alpha_i \ldots \alpha_j \ldots} = - \xi_{\ldots \alpha_j \ldots \alpha_i \ldots}$
upon interchange of two indices. In particular, if $\alpha_0,\ldots, \alpha_k$ contains a repetition,
then $\xi_{\alpha_0 \ldots \alpha_k}=0.$
The difference operator
\[ \delta: \prod \Omega^\bullet (\pi^{-1} U_{\alpha_0 \ldots \alpha_p}) \longrightarrow
\prod \Omega^\bullet (\pi^{-1} U_{\alpha_0 \ldots \alpha_{p+1}}), \]
defined by
\[ (\delta \xi)_{\alpha_0 \ldots \alpha_{p+1}} = \sum_{j=0}^{p+1} (-1)^j
 \xi_{\alpha_0 \ldots \hat{\alpha}_j \ldots \alpha_{p+1}}|_{\pi^{-1}
  U_{\alpha_0 \ldots \alpha_{p+1}}} \]
and satisfying $\delta^2 =0,$ restricts to a difference operator
\[ \delta: \prod \Omega^\bullet_\ms (U_{\alpha_0 \ldots \alpha_p}) \longrightarrow
\prod \Omega^\bullet_\ms (U_{\alpha_0 \ldots \alpha_{p+1}}). \]
Since the de Rham differential $d$ commutes with restriction to open subsets, we have
$d\delta = \delta d$. Thus
\[ C^p (\mathfrak{U}; \Omega^q_\ms) = \prod \Omega^q_\ms (U_{\alpha_0 \ldots
 \alpha_p}) \]
is a double complex with horizontal differential $\delta$ and vertical differential $d$.
The associated simple complex $C^\bullet_\ms (\mathfrak{U})$ has groups
\[ C^j_\ms (\mathfrak{U}) = \bigoplus_{p+q=j} C^p (\mathfrak{U}; \Omega^q_\ms) \]
in degree $j$ and differential $D = \delta + (-1)^p d$ on $C^p (\mathfrak{U}; \Omega^q_\ms)$.
We shall refer to the double
complex $(C^\bullet (\mathfrak{U}; \Omega^\bullet_\ms), \delta, d)$ as the
\emph{multiplicatively structured \v{C}ech-de Rham complex}.

\begin{lemma} \label{lem.gmv} (Generalized Mayer-Vietoris sequence.)
The sequence
\[ 0 \longrightarrow \Omega^\bullet_\ms (B)
 \stackrel{r}{\longrightarrow} C^0 (\mathfrak{U}; \Omega^\bullet_\ms)
 \stackrel{\delta}{\longrightarrow} C^1 (\mathfrak{U}; \Omega^\bullet_\ms)
 \stackrel{\delta}{\longrightarrow} C^2 (\mathfrak{U}; \Omega^\bullet_\ms)
 \stackrel{\delta}{\longrightarrow} \cdots \]
is exact.
\end{lemma}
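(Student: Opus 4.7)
The plan is to mimic the proof of the classical Mayer-Vietoris principle for the \v{C}ech-de Rham complex (as in Bott-Tu), while carefully checking that every operation stays within the multiplicatively structured subcomplex. Exactness at $\Omega^\bullet_\ms (B)$ is clear, since a global form that restricts to zero on each $\pi^{-1}(U_\alpha)$ vanishes on $E = \bigcup_\alpha \pi^{-1}(U_\alpha)$, so $r$ is injective. Exactness at $C^0 (\mathfrak{U}; \Omega^\bullet_\ms)$ follows immediately from the sheaf property of smooth forms: if $\xi = (\xi_\alpha)_\alpha$ satisfies $\delta \xi = 0$, the $\xi_\alpha$ agree on overlaps and patch to a smooth form $\omega \in \Omega^\bullet (E)$, which lies in $\Omega^\bullet_\ms (B)$ because each $\omega|_{\pi^{-1}U_\alpha} = \xi_\alpha$ is $\alpha$-multiplicatively structured by hypothesis.

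For exactness at $C^p (\mathfrak{U}; \Omega^\bullet_\ms)$ with $p \geq 1$, I will construct a chain homotopy $K : C^p \to C^{p-1}$ satisfying $\delta K + K \delta = \id$. Choose a smooth partition of unity $\{ \rho_\alpha \}$ on the \emph{base} $B$ subordinate to $\mathfrak{U}$, and define
\[ (K\xi)_{\alpha_0 \ldots \alpha_{p-1}} = \sum_\alpha \pi^\ast \rho_\alpha \cdot \xi_{\alpha \alpha_0 \ldots \alpha_{p-1}}, \]
where each summand, a priori only defined on $\pi^{-1}(U_{\alpha \alpha_0 \ldots \alpha_{p-1}})$, is extended by zero to $\pi^{-1}(U_{\alpha_0 \ldots \alpha_{p-1}})$ using that $\supp \rho_\alpha \subset U_\alpha$. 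The identity $(\delta K + K \delta) \xi = \xi$ then follows from a routine index-shuffling computation, using $\sum_\alpha \pi^\ast \rho_\alpha = 1$ to produce the diagonal term and cancelling the remaining pieces against $\delta K \xi$, exactly as in the classical case.

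The one point where the multiplicatively structured setting requires genuine attention is the verification that $K$ sends $C^p (\mathfrak{U}; \Omega^\bullet_\ms)$ into $C^{p-1} (\mathfrak{U}; \Omega^\bullet_\ms)$; this is the main potential obstacle, and it is precisely what forces the partition of unity to live on $B$ rather than on $E$. Indeed, if $\xi_{\alpha \alpha_0 \ldots \alpha_{p-1}} = \phi^\ast_{\alpha_0} \sum_j \pi^\ast_1 \eta_j \wedge \pi^\ast_2 \gamma_j$ is $\alpha_0$-multiplicatively structured over $U_{\alpha \alpha_0 \ldots \alpha_{p-1}}$, then, using $\pi_1 \circ \phi_{\alpha_0} = \pi$ on $\pi^{-1}(U_{\alpha_0})$,
\[ \pi^\ast \rho_\alpha \cdot \xi_{\alpha \alpha_0 \ldots \alpha_{p-1}} = \phi^\ast_{\alpha_0} \sum_j \pi^\ast_1 (\rho_\alpha|_{U_{\alpha_0}} \cdot \eta_j) \wedge \pi^\ast_2 \gamma_j, \]
which is again $\alpha_0$-multiplicatively structured, and extending by zero preserves this property. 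By contrast, an arbitrary partition of unity on $E$ would destroy the product form, since fiber-dependent factors cannot in general be absorbed into a sum of pullbacks $\pi_1^\ast \eta_j \wedge \pi_2^\ast \gamma_j$. Once this key observation is in place, the classical proof transfers verbatim and yields the asserted exactness.
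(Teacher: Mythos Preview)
Your proof is correct and follows the standard approach. The paper does not actually include a proof of this lemma; it states at the beginning of Section~\ref{sec.fibharmflat} that the proofs can be found in the companion paper \cite{banagl-derhamintspace}, so there is no in-text argument to compare against directly. Your treatment is precisely the expected one: the classical Bott--Tu contracting homotopy built from a partition of unity, together with the crucial observation that the partition of unity must be taken on the base $B$ so that multiplication by $\pi^\ast \rho_\alpha$ only modifies the $\eta_j$ factors and preserves the multiplicatively structured form. You have also correctly noted that the extension by zero of each summand remains $\alpha_0$-multiplicatively structured, since $\rho_\alpha \eta_j$ extends by zero to a smooth form on $U_{\alpha_0 \ldots \alpha_{p-1}}$, and that the resulting finite sum (the cover being finite) is again of the required shape.
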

Let us recall a fundamental fact about double complexes.
\begin{prop} \label{prop.rowex}
If all the rows of an augmented double complex are exact, then the augmentation map
induces an isomorphism from the cohomology of the augmentation column to the
cohomology of the simple complex associated to the double complex.
\end{prop}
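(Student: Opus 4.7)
The plan is to run the standard staircase (zigzag) diagram chase on the augmented double complex; conceptually this is the degeneration of the ``rows first'' spectral sequence, but the chase is short enough to give directly in the elementary spirit of the preceding material. Denote the augmentation column by $(K^\bullet, d)$ with augmentation map $r : K^q \to C^{0,q}$, and let $D = \delta + (-1)^p d$ be the total differential on $\mathrm{Tot}^n(C) = \bigoplus_{p+q=n} C^{p,q}$. Row exactness at $C^{0,q}$ says exactly that $r$ is injective with image $\ker(\delta : C^{0,q} \to C^{1,q})$, so $\delta \circ r = 0$ and $r$ is a chain map into the total complex.

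For surjectivity of $r_* : H^n(K^\bullet) \to H^n(\mathrm{Tot}(C))$, take a $D$-cocycle $\xi = \sum_{p+q=n} \xi^{p,q}$ and let $P$ be the largest $p$ with $\xi^{p,n-p} \neq 0$. Since nothing else can contribute to the $C^{P+1, n-P}$-component of $D\xi$, the cocycle condition forces $\delta \xi^{P,n-P} = 0$; for $P \geq 1$, row exactness then furnishes $\eta \in C^{P-1, n-P}$ with $\delta \eta = \xi^{P, n-P}$, and replacing $\xi$ by $\xi - D\eta$ preserves $[\xi]$ while strictly lowering the maximal column index. Iterating a finite number of times (the sum is finite because the double complex is concentrated in $p,q \geq 0$) reduces to $\xi = \xi^{0,n}$ with $\delta \xi^{0,n} = 0$, so $\xi^{0,n} = r(k)$ for a unique $k \in K^n$. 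The remaining piece of the cocycle equation reads $d\xi^{0,n} = 0$, which by injectivity of $r$ gives $dk = 0$; hence $[\xi] = r_*[k]$.

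For injectivity, suppose $r(k) = D\xi$ with $dk = 0$. Writing $\xi = \sum_{p+q=n-1} \xi^{p,q}$ and letting $P$ be the top index, the fact that $r(k)$ is supported in column $0$ forces $\delta \xi^{P, n-1-P} = 0$ for $P \geq 1$. Row exactness lets us peel off the top term by subtracting a $D$-coboundary, exactly as in the surjectivity argument, and iteration leaves $\xi = \xi^{0, n-1}$ with $\delta \xi^{0, n-1} = 0$. Then $\xi^{0,n-1} = r(k')$ for some $k' \in K^{n-1}$, and $r(k) = D\xi = d r(k') = r(dk')$ forces $k = dk'$ by injectivity of $r$.

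There is no substantial obstacle here; the entire proof is bookkeeping. The only points to watch are the signs buried in $D = \delta + (-1)^p d$ (which do not affect the argument because the staircase kills one component at a time) and the finiteness needed to define the top index $P$, which is immediate from the first-quadrant shape of the double complex, so the iteration always terminates.
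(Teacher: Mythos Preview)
Your proof is correct. The staircase (zigzag) chase you give is the standard argument: the top-column peeling for surjectivity and the analogous reduction for injectivity are exactly right, and you have handled the sign convention $D=\delta+(-1)^p d$ and the termination (first-quadrant support) properly.

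As to comparison with the paper: the paper does not actually supply a proof of this proposition. It is introduced as ``a fundamental fact about double complexes'' and simply invoked, the implicit reference being \cite{botttu} (where this is proved by essentially the same staircase argument you wrote out). So there is nothing to contrast; you have filled in what the paper leaves as background, and your write-up matches the classical proof.
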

This fact is applied in showing:
\begin{prop} \label{prop.simp1}
The restriction map $r: \Omega^\bullet_\ms (B) \rightarrow
C^0 (\mathfrak{U}; \Omega^\bullet_\ms)$ induces an isomorphism
\[ r^\ast: H^\bullet (\Omega^\bullet_\ms (B)) \stackrel{\cong}{\longrightarrow}
H^\bullet (C^\bullet_\ms (\mathfrak{U}), D). \]
\end{prop}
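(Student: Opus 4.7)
The plan is to realize $r$ as the augmentation map of the double complex $(C^\bullet(\mathfrak{U}; \Omega^\bullet_\ms), \delta, d)$ and then invoke Proposition \ref{prop.rowex} directly. The goal is to place $\Omega^\bullet_\ms(B)$ as the augmentation column, so that the rows of the resulting augmented double complex are precisely the Mayer--Vietoris sequences supplied by Lemma \ref{lem.gmv}.

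First I would set up the augmented double complex: for each fixed internal degree $q$, consider the row
\[
0 \longrightarrow \Omega^q_\ms(B) \stackrel{r}{\longrightarrow} C^0(\mathfrak{U}; \Omega^q_\ms) \stackrel{\delta}{\longrightarrow} C^1(\mathfrak{U}; \Omega^q_\ms) \stackrel{\delta}{\longrightarrow} C^2(\mathfrak{U}; \Omega^q_\ms) \stackrel{\delta}{\longrightarrow} \cdots.
\]
The generalized Mayer--Vietoris sequence of Lemma \ref{lem.gmv}, although stated as a sequence of $d$-complexes, is an equality of underlying graded vector spaces, so reading it in each fixed degree $q$ yields the exactness of this row. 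Thus every row of the augmented double complex is exact.

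Next I would invoke Proposition \ref{prop.rowex}: since all rows are exact, the augmentation map $r$ induces an isomorphism from the cohomology of the augmentation column with respect to the vertical differential $d$ — which by definition is $H^\bullet(\Omega^\bullet_\ms(B))$ — to the cohomology $H^\bullet(C^\bullet_\ms(\mathfrak{U}), D)$ of the associated simple complex with total differential $D = \delta + (-1)^p d$. This is exactly the asserted isomorphism $r^\ast$.

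There is no real obstacle here; the whole argument is formal once Lemma \ref{lem.gmv} is in hand. The only point requiring minor care is bookkeeping: one must verify that the signs baked into $D = \delta + (-1)^p d$ are compatible with the standard statement of Proposition \ref{prop.rowex} (i.e.\ that the augmentation $r$ actually commutes on the nose with the differentials as required to be a chain map into the total complex), but this is a routine check since $d$ commutes with restriction and hence with $r$.
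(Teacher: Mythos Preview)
Your proposal is correct and follows precisely the paper's intended argument: one augments the double complex $C^\bullet(\mathfrak{U};\Omega^\bullet_\ms)$ by $\Omega^\bullet_\ms(B)$ via $r$, observes that each row is exact by Lemma~\ref{lem.gmv}, and applies Proposition~\ref{prop.rowex}. The paper gives no further details beyond the remark that Proposition~\ref{prop.rowex} ``is applied in showing'' Proposition~\ref{prop.simp1}.
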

The double complex $(C^\bullet (\pi^{-1} \mathfrak{U}; \Omega^\bullet),
\delta, d)$ given by
\[ C^p (\pi^{-1} \mathfrak{U}; \Omega^q) = \prod \Omega^q (\pi^{-1}
  U_{\alpha_0 \ldots \alpha_p}) \]
can be used to compute the cohomology of the total space $E$. The restriction map
\[ \overline{r}: \Omega^\bullet (E) \longrightarrow \prod_\alpha \Omega^\bullet
 (\pi^{-1} U_\alpha) = C^0 (\pi^{-1} \mathfrak{U}; \Omega^\bullet) \]
makes $C^\bullet (\pi^{-1} \mathfrak{U}; \Omega^\bullet)$ into an augmented
double complex. By the generalized Mayer-Vietoris sequence, Proposition 8.5 of
\cite{botttu}, the rows of this augmented double complex are exact. From
Proposition \ref{prop.rowex}, we thus deduce:
\begin{prop} \label{prop.simp2}
The restriction map $\overline{r}: \Omega^\bullet (E) \rightarrow
C^0 (\pi^{-1}\mathfrak{U}; \Omega^\bullet)$ induces an isomorphism
\[ \overline{r}^\ast: H^\bullet (E) = H^\bullet (\Omega^\bullet (E)) \stackrel{\cong}{\longrightarrow}
H^\bullet (C^\bullet (\pi^{-1}\mathfrak{U}), D), \]
where $(C^\bullet (\pi^{-1}\mathfrak{U}), D)$ is the simple complex of
$(C^\bullet (\pi^{-1}\mathfrak{U}; \Omega^\bullet), \delta, d)$.
\end{prop}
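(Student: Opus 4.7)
The plan is to mirror the argument used for Proposition \ref{prop.simp1}, but now applied to the ordinary de Rham complex of the total space $E$ rather than to the multiplicatively structured subcomplex. Concretely, I would view $(C^\bullet(\pi^{-1}\mathfrak{U}; \Omega^\bullet), \delta, d)$ as a double complex augmented by the restriction map
\[ \overline{r}: \Omega^\bullet(E) \longrightarrow C^0(\pi^{-1}\mathfrak{U}; \Omega^\bullet), \]
which sits in an augmentation column that should turn out to be quasi-isomorphic to the associated simple complex.

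First, I would verify that the augmented rows
\[ 0 \longrightarrow \Omega^q(E) \stackrel{\overline{r}}{\longrightarrow} C^0(\pi^{-1}\mathfrak{U}; \Omega^q) \stackrel{\delta}{\longrightarrow} C^1(\pi^{-1}\mathfrak{U}; \Omega^q) \stackrel{\delta}{\longrightarrow} \cdots \]
are exact for every fixed $q$. This is precisely the generalized Mayer-Vietoris sequence for the open cover $\pi^{-1}\mathfrak{U} = \{ \pi^{-1} U_\alpha \}$ of $E$, which is Proposition 8.5 of \cite{botttu}. Note that this step does not require $\pi^{-1}\mathfrak{U}$ to be a good cover (the sets $\pi^{-1} U_\alpha \cong U_\alpha \times F$ are typically not contractible), since the Mayer-Vietoris argument for differential forms goes through for any open cover by means of a smooth partition of unity subordinate to $\pi^{-1}\mathfrak{U}$; such a partition is supplied by pulling back a partition of unity on $B$ subordinate to $\mathfrak{U}$.

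Once exactness of the rows is established, I would invoke Proposition \ref{prop.rowex}: with all rows exact, the augmentation $\overline{r}$ induces an isomorphism from the cohomology of the augmentation column, namely $H^\bullet(\Omega^\bullet(E)) = H^\bullet(E)$, to the cohomology of the simple complex $(C^\bullet(\pi^{-1}\mathfrak{U}), D)$ associated to the double complex. That gives precisely the isomorphism
\[ \overline{r}^\ast: H^\bullet(E) \stackrel{\cong}{\longrightarrow} H^\bullet(C^\bullet(\pi^{-1}\mathfrak{U}), D). \]
The only potentially subtle point is the partition-of-unity argument used for exactness of the Mayer-Vietoris sequence at the level of forms, but as noted, this is classical and explicitly contained in the cited reference; no feature of flatness or of the fiber metric is used here, in contrast with the multiplicatively structured variant of Proposition \ref{prop.simp1}.
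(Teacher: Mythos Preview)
Your proposal is correct and matches the paper's own argument essentially verbatim: the paper also cites Proposition~8.5 of \cite{botttu} for exactness of the rows of the augmented double complex and then invokes Proposition~\ref{prop.rowex} to conclude. Your additional remarks on the partition of unity and on the cover $\pi^{-1}\mathfrak{U}$ not needing to be good are accurate elaborations, but the core strategy is identical.
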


Regarding $\real^n \times F$ as a trivial fiber bundle over $\real^n$ with projection $\pi_1$, the
multiplicatively structured complex $\Omega^\bullet_\ms (\real^n)$ is defined as
\[ \Omega^\bullet_\ms (\real^n) = \{ \omega \in \Omega^\bullet (\real^n \times F) ~|~
 \omega = \sum_j \pi^\ast_1 \eta_j \wedge \pi^\ast_2 \gamma_j,~
 \eta_j \in \Omega^\bullet (\real^n),~ \gamma_j \in \Omega^\bullet (F) \}. \]
Let $s: \real^{n-1} \hookrightarrow \real \times \real^{n-1}  = \real^n$ be the standard inclusion
$s(u) = (0,u),$ $u\in \real^{n-1}.$ Let $q: \real^n = \real \times \real^{n-1} \rightarrow \real^{n-1}$
be the standard projection $q(t,u) =u,$ so that
$qs = \id_{\real^{n-1}}.$
Set
\[ S = s\times \id_F: \real^{n-1} \times F \hookrightarrow \real^n \times F,~ 
 Q = q\times \id_F: \real^{n} \times F \rightarrow \real^{n-1} \times F \]
so that $QS = \id_{\real^{n-1} \times F}.$
The induced map
$S^\ast: \Omega^\bullet (\real^n \times F) \rightarrow \Omega^\bullet (\real^{n-1} \times F)$
restricts to a map
\[ S^\ast: \Omega^\bullet_\ms (\real^n) \rightarrow \Omega^\bullet_\ms (\real^{n-1}). \]
The induced
map $Q^\ast: \Omega^\bullet (\real^{n-1} \times F) \rightarrow \Omega^\bullet (\real^n \times F)$
restricts to a map
\[ Q^\ast: \Omega^\bullet_\ms (\real^{n-1}) \rightarrow \Omega^\bullet_\ms (\real^{n}). \]

\begin{prop} \label{prop.rnfrnm1f}
The maps
\[ \xymatrix{\Omega^\bullet_{\ms}(\real^n) & \Omega^\bullet_\ms (\real^{n-1}) 
\ar@<1ex>[l]^{Q^\ast}  \ar@<1ex>[l];[]^{S^\ast}
} \]
are chain homotopy inverses of each other and thus induce mutually inverse
isomorphisms
\[ \xymatrix{H^\bullet (\Omega^\bullet_{\ms}(\real^n)) & H^\bullet (\Omega^\bullet_\ms (\real^{n-1})) 
\ar@<1ex>[l]^{Q^\ast}  \ar@<1ex>[l];[]^{S^\ast}
} \]
on cohomology.
\end{prop}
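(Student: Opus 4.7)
The plan is to adapt the classical Poincar\'e-lemma argument for the pair $(\real^n, \real^{n-1})$ to the multiplicatively structured setting, the only real point being that the chain homotopy must preserve the multiplicative structure. One direction is immediate: since $Q \circ S = \id_{\real^{n-1} \times F}$, functoriality of $(\cdot)^\ast$ gives $S^\ast \circ Q^\ast = \id$ on $\Omega^\bullet_\ms(\real^{n-1})$, with no further argument required.

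For the other composition I would introduce the straight-line deformation
$$H : [0,1] \times \real^n \times F \longrightarrow \real^n \times F, \qquad H(\tau, t, u, x) = ((1-\tau)t, u, x),$$
which interpolates between $H_0 = \id$ and $H_1 = S \circ Q$, together with the Cartan-style homotopy operator $K\omega = -\int_0^1 \iota_{\partial/\partial \tau} H^\ast \omega \, d\tau$. The standard Poincar\'e-lemma computation (decompose $H^\ast \omega = \alpha + d\tau \wedge \beta$, apply $d$, and integrate in $\tau$) yields $dK + Kd = \id - Q^\ast S^\ast$ already on all of $\Omega^\bullet(\real^n \times F)$.

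The key point, and really the only thing requiring verification, is that $K$ restricts to an operator on the subcomplex $\Omega^\bullet_\ms(\real^n)$. The deformation $H$ was chosen to act trivially in the $F$-direction, so for a summand $\omega = \pi_1^\ast \eta \wedge \pi_2^\ast \gamma$ of a multiplicatively structured form, the pullback $H^\ast(\pi_2^\ast \gamma)$ is pulled back from $F$ alone and contains no $d\tau$, hence is annihilated by $\iota_{\partial/\partial \tau}$. A brief calculation then gives
$$K\omega = \pi_1^\ast(K_1 \eta) \wedge \pi_2^\ast \gamma,$$
where $K_1 : \Omega^\bullet(\real^n) \to \Omega^{\bullet - 1}(\real^n)$ is the classical Poincar\'e homotopy operator for the contraction $(\tau, t, u) \mapsto ((1-\tau) t, u)$. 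This expression is manifestly multiplicatively structured, and extending linearly over the finite sums defining an element of $\Omega^\bullet_\ms(\real^n)$ finishes the construction. The main obstacle is precisely this compatibility check; it forces the choice to contract only the base direction $\real^n$ and to leave $F$ untouched, so that integration in the $[0,1]$-direction never couples base and fiber forms. Once this is arranged, there is no further analytic or topological difficulty.
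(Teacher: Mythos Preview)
Your proposal is correct and follows precisely the approach one expects: the paper itself defers the proof of this proposition to \cite{banagl-derhamintspace}, but the argument there is exactly the classical Poincar\'e-lemma homotopy operator, with the observation that contracting only in the first $\real$-factor keeps the operator $K$ inside the multiplicatively structured subcomplex via $K(\pi_1^\ast \eta \wedge \pi_2^\ast \gamma) = \pi_1^\ast (K_1 \eta)\wedge \pi_2^\ast \gamma$. Your identification of this compatibility check as the only genuine content is accurate.
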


Let $S_0: F = \{ 0 \} \times F \hookrightarrow \real^n \times F$ be the inclusion
at $0$, inducing a map
$S^\ast_0: \Omega^\bullet_\ms (\real^n) \to \Omega^\bullet (F).$
The map $\pi^\ast_2: \Omega^\bullet (F) \rightarrow \Omega^\bullet
(\real^n \times F)$ restricts to a map
$\pi^\ast_2: \Omega^\bullet (F) \longrightarrow \Omega^\bullet_\ms
(\real^n).$
An induction on $n$ using Proposition \ref{prop.rnfrnm1f} shows:
\begin{prop} \label{prop.rntfharmf} (Poincar\'e Lemma for multiplicatively structured forms.)
The maps
\[ \xymatrix{\Omega^\bullet_{\ms}(\real^n) & \Omega^\bullet (F) 
\ar@<1ex>[l]^{\pi^\ast_2}  \ar@<1ex>[l];[]^{S^\ast_0}
} \]
are chain homotopy inverses of each other and thus induce mutually inverse
isomorphisms
\[ \xymatrix{H^\bullet (\Omega^\bullet_{\ms}(\real^n)) &  H^\bullet (F) 
\ar@<1ex>[l]^>>>>>{\pi^\ast_2}  \ar@<1ex>[l];[]^>>>>{S^\ast_0}
} \]
on cohomology.
\end{prop}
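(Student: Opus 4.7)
The plan is to induct on $n$, reducing to Proposition \ref{prop.rnfrnm1f} at each step. One direction is trivial: since $\pi_2 \circ S_0 = \id_F$, we immediately have $S_0^\ast \circ \pi^\ast_2 = \id$ on $\Omega^\bullet(F)$. So the real task is to produce, for every $n \geq 0$, a chain homotopy $K_n$ on $\Omega^\bullet_\ms(\real^n)$ satisfying $\pi^\ast_2 \circ S_0^\ast - \id = dK_n + K_n d$. The base case $n=0$ is trivial, as $\Omega^\bullet_\ms(\real^0) = \Omega^\bullet(F)$ and both $\pi^\ast_2$ and $S_0^\ast$ reduce to the identity there.

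For the inductive step ($n \geq 1$), I would factor $S_0 \colon F \hookrightarrow \real^n \times F$ through $\real^{n-1}\times F$ as $S_0 = S \circ i$, where $i \colon F \hookrightarrow \real^{n-1}\times F$ is the analogous inclusion at $0$ in one lower dimension. Correspondingly, factor $\pi_2 \colon \real^n \times F \to F$ as $\pi_2 = p \circ Q$, where $p \colon \real^{n-1}\times F \to F$ is the fiber projection in one lower dimension. Pulling back,
\[ \pi^\ast_2 \circ S_0^\ast \;=\; Q^\ast \circ (p^\ast \circ i^\ast) \circ S^\ast, \]
and each factor respects the multiplicatively structured subcomplexes, as is immediate from the definitions for a trivial bundle. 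The inductive hypothesis supplies a chain homotopy $K_{n-1}$ on $\Omega^\bullet_\ms(\real^{n-1})$ with $p^\ast \circ i^\ast - \id = dK_{n-1} + K_{n-1} d$. Since pullback commutes with the de Rham differential, composing with $Q^\ast$ on the left and $S^\ast$ on the right yields
\[ \pi^\ast_2 \circ S_0^\ast - Q^\ast \circ S^\ast \;=\; d(Q^\ast \circ K_{n-1} \circ S^\ast) + (Q^\ast \circ K_{n-1} \circ S^\ast) d \]
on $\Omega^\bullet_\ms(\real^n)$. Proposition \ref{prop.rnfrnm1f} provides a further chain homotopy $L_n$ from $Q^\ast \circ S^\ast$ to $\id$, and the sum $K_n := Q^\ast \circ K_{n-1} \circ S^\ast + L_n$ is the desired chain homotopy on $\Omega^\bullet_\ms(\real^n)$. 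This completes the induction and shows that $\pi^\ast_2$ and $S_0^\ast$ are mutually inverse chain homotopy equivalences.

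I do not anticipate any real obstacle here; the essential analytic and combinatorial input was already absorbed into Proposition \ref{prop.rnfrnm1f}, and the present argument is pure bookkeeping. The only point requiring a brief check is that each of the pullbacks $Q^\ast, S^\ast, p^\ast, i^\ast, \pi^\ast_2, S_0^\ast$ restricts appropriately between the multiplicatively structured subcomplexes at each stage, which is transparent from the defining expression $\omega = \sum \pi^\ast_1 \eta_j \wedge \pi^\ast_2 \gamma_j$ on the trivial bundles $\real^k \times F$.
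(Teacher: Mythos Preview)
Your proposal is correct and follows precisely the approach indicated in the paper, which simply states that the result follows by induction on $n$ using Proposition~\ref{prop.rnfrnm1f}. You have in fact supplied more detail than the paper does: the explicit factorizations $S_0 = S \circ i$ and $\pi_2 = p \circ Q$, and the construction of the composite chain homotopy $K_n = Q^\ast K_{n-1} S^\ast + L_n$, are exactly what the inductive step requires and what the paper leaves to the reader.
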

Using the classical Poincar\'e lemma, this Proposition readily implies:
\begin{prop} \label{prop.fhrnrntfiso}
The inclusion $\Omega^\bullet_\ms (\real^n)\subset \Omega^\bullet
(\real^n \times F)$ induces an isomorphism
\[ H^\bullet (\Omega^\bullet_\ms (\real^n)) \cong H^\bullet
(\real^n \times F) \]
on cohomology.
\end{prop}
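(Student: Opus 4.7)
The plan is to use a two-out-of-three argument applied to the restriction map $S_0^\ast$ factored through the subcomplex of multiplicatively structured forms. More precisely, I would observe that the inclusion $\iota: \Omega^\bullet_\ms(\real^n) \hookrightarrow \Omega^\bullet(\real^n \times F)$ fits into the commutative triangle
\[ \xymatrix{ \Omega^\bullet_\ms(\real^n) \ar[rr]^{\iota} \ar[dr]_{S_0^\ast} & & \Omega^\bullet(\real^n \times F) \ar[dl]^{S_0^\ast} \\ & \Omega^\bullet(F) & } \]
where both diagonal arrows are literally the same pullback $S_0^\ast$ restricted along the inclusion $S_0: F = \{0\}\times F \hookrightarrow \real^n \times F$; commutativity is immediate.

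Next I would invoke the two inputs to control the diagonal maps. The left-hand diagonal $S_0^\ast: \Omega^\bullet_\ms(\real^n) \to \Omega^\bullet(F)$ is a quasi-isomorphism by Proposition \ref{prop.rntfharmf}, with chain homotopy inverse $\pi_2^\ast$. The right-hand diagonal $S_0^\ast: \Omega^\bullet(\real^n \times F) \to \Omega^\bullet(F)$ is a quasi-isomorphism by the classical Poincar\'e lemma: it is the map induced in de Rham cohomology by the deformation retraction of $\real^n \times F$ onto $\{0\} \times F$, and in particular $\pi_2^\ast: \Omega^\bullet(F) \to \Omega^\bullet(\real^n \times F)$ is a chain homotopy inverse.

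By the standard two-out-of-three principle for quasi-isomorphisms in a commutative triangle, the horizontal arrow $\iota$ must also induce an isomorphism
\[ H^\bullet(\Omega^\bullet_\ms(\real^n)) \stackrel{\cong}{\longrightarrow} H^\bullet(\real^n \times F) \]
on cohomology, which is the claim.

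There is essentially no obstacle here beyond verifying that everything is set up correctly; the real work was already done in Proposition \ref{prop.rntfharmf}. The only thing worth double-checking is the compatibility of the $\pi_2^\ast$'s used on the two sides: the map $\pi_2^\ast: \Omega^\bullet(F) \to \Omega^\bullet(\real^n \times F)$ actually lands in the subcomplex $\Omega^\bullet_\ms(\real^n)$ (as noted after Proposition \ref{prop.rnfrnm1f}), so the same chosen homotopy inverse works on both sides and the triangle is coherent with respect to the homotopy inverses as well.
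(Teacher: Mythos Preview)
Your proof is correct and is exactly the argument the paper has in mind: the paper merely says ``Using the classical Poincar\'e lemma, this Proposition readily implies'' Proposition~\ref{prop.fhrnrntfiso}, and your commutative triangle with $S_0^\ast$ on both diagonals, invoking Proposition~\ref{prop.rntfharmf} on one side and the classical Poincar\'e lemma on the other, is precisely the intended two-out-of-three argument spelled out.
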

\begin{prop} \label{prop.lociso}
For any $U_{\alpha_0 \ldots \alpha_p},$ the inclusion
$\Omega^\bullet_\ms (U_{\alpha_0 \ldots \alpha_p}) \hookrightarrow
 \Omega^\bullet (\pi^{-1} U_{\alpha_0 \ldots \alpha_p})$
induces an isomorphism on cohomology (with respect to the de Rham differential $d$).
\end{prop}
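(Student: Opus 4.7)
The plan is to reduce the statement to the already-established case of the trivial bundle over $\real^n$ (Proposition \ref{prop.fhrnrntfiso}) via a chosen local trivialization.

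First, since $\mathfrak{U}$ is a good cover, $U_{\alpha_0 \ldots \alpha_p}$ is diffeomorphic to $\real^n$; pick such a diffeomorphism $\psi: \real^n \xrightarrow{\cong} U_{\alpha_0 \ldots \alpha_p}$. Next, since $U_{\alpha_0 \ldots \alpha_p} \subset U_{\alpha_0}$, the trivialization $\phi_{\alpha_0}$ restricts to a diffeomorphism
\[ \phi_{\alpha_0}|: \pi^{-1}(U_{\alpha_0 \ldots \alpha_p}) \xrightarrow{\cong} U_{\alpha_0 \ldots \alpha_p} \times F \]
commuting with projection to $U_{\alpha_0 \ldots \alpha_p}$. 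Composing with $\psi \times \id_F$ yields a diffeomorphism $\Phi: \real^n \times F \xrightarrow{\cong} \pi^{-1}(U_{\alpha_0 \ldots \alpha_p})$ which is a map of trivial bundles over $\psi$.

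The key point is that $\Phi^\ast$ identifies the two complexes with their standard counterparts. Unravelling the definition, a form $\omega \in \Omega^\bullet(\pi^{-1}U_{\alpha_0 \ldots \alpha_p})$ is $\alpha_0$-multiplicatively structured precisely when $(\phi_{\alpha_0}|)^{-\ast}\omega$ is a finite sum of forms of type $\pi_1^\ast \eta_j \wedge \pi_2^\ast \gamma_j$ on $U_{\alpha_0 \ldots \alpha_p} \times F$. Pulling back further along $\psi \times \id_F$ preserves this form (since $(\psi \times \id_F)^\ast \pi_1^\ast \eta_j = \pi_1^\ast \psi^\ast \eta_j$ and $(\psi \times \id_F)^\ast \pi_2^\ast \gamma_j = \pi_2^\ast \gamma_j$), so $\Phi^\ast$ restricts to an isomorphism of chain complexes $\Omega^\bullet_\ms(U_{\alpha_0 \ldots \alpha_p}) \xrightarrow{\cong} \Omega^\bullet_\ms(\real^n)$ and fits into a commutative square
\[ \xymatrix{
\Omega^\bullet_\ms(U_{\alpha_0 \ldots \alpha_p}) \ar[r] \ar[d]_{\Phi^\ast}^\cong & \Omega^\bullet(\pi^{-1}U_{\alpha_0 \ldots \alpha_p}) \ar[d]^{\Phi^\ast}_\cong \\
\Omega^\bullet_\ms(\real^n) \ar[r] & \Omega^\bullet(\real^n \times F)
} \]
with horizontal arrows the inclusions.

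Both vertical maps are isomorphisms of chain complexes (hence quasi-isomorphisms), and the bottom inclusion induces an isomorphism on cohomology by Proposition \ref{prop.fhrnrntfiso}. By the two-out-of-three property, the top inclusion induces an isomorphism on cohomology, proving the proposition. There is essentially no obstacle here: the content lies entirely in Proposition \ref{prop.fhrnrntfiso}, and this step merely verifies that the notion of $\alpha_0$-multiplicative structure is transported correctly under the trivialization $\phi_{\alpha_0}$ and the chart $\psi$, which is immediate from the definition.
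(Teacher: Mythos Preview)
Your proof is correct and is precisely the argument the paper intends: the paper does not spell out a proof of Proposition~\ref{prop.lociso} here (it defers proofs in this section to \cite{banagl-derhamintspace}), but its placement immediately after Proposition~\ref{prop.fhrnrntfiso} signals exactly the reduction you carry out via a good-cover chart and the trivialization $\phi_{\alpha_0}$. Your verification that $\Phi^\ast$ matches $\Omega^\bullet_\ms(U_{\alpha_0\ldots\alpha_p})$ with $\Omega^\bullet_\ms(\real^n)$ is the only thing one has to check, and you do so correctly.
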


Since $d$ and $\delta$ on $C^\bullet (\mathfrak{U}; \Omega^\bullet_\ms)$
were obtained by restricting $d$ and $\delta$ on $C^\bullet
(\pi^{-1} \mathfrak{U}; \Omega^\bullet),$ the natural inclusion
$C^\bullet (\mathfrak{U}; \Omega^\bullet_\ms) \hookrightarrow
  C^\bullet (\pi^{-1} \mathfrak{U}; \Omega^\bullet)$
is a morphism of double complexes. It induces an isomorphism on vertical (i.e. $d$-)
cohomology by Proposition \ref{prop.lociso}. Whenever a morphism of
double complexes induces an isomorphism on vertical cohomology, then
it also induces an isomorphism on the $D$-cohomology of the respective
simple complexes. Consequently, using Propositions \ref{prop.simp1} and \ref{prop.simp2},
one gets:
\begin{thm} \label{thm.fhcomputescohtotspace}
The inclusion $\Omega^\bullet_\ms (B) \hookrightarrow \Omega^\bullet (E)$
induces an isomorphism
\[ H^\bullet (\Omega^\bullet_\ms (B)) \stackrel{\cong}{\longrightarrow}
H^\bullet (E) \]
on cohomology.
\end{thm}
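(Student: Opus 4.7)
The plan is to assemble the previously established propositions into a commutative diagram of (augmented) double complexes and extract the desired isomorphism by a standard spectral sequence / double-complex comparison argument.

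First I would set up the commutative square
\[
\xymatrix{
\Omega^\bullet_\ms(B) \ar[r]^-{r} \ar@{^{(}->}[d] & C^\bullet_\ms(\mathfrak{U}) \ar@{^{(}->}[d] \\
\Omega^\bullet(E) \ar[r]^-{\overline{r}} & C^\bullet(\pi^{-1}\mathfrak{U})
}
\]
where the bottom row is the restriction map from Proposition \ref{prop.simp2}, the top row is the restriction from Proposition \ref{prop.simp1}, and the vertical arrows are the natural inclusions (on the right, the inclusion of the simple complex associated with $C^\bullet(\mathfrak{U};\Omega^\bullet_\ms)$ into the simple complex associated with $C^\bullet(\pi^{-1}\mathfrak{U};\Omega^\bullet)$). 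By Propositions \ref{prop.simp1} and \ref{prop.simp2}, both horizontal maps are isomorphisms on cohomology. Thus it suffices to show that the right-hand vertical inclusion is a quasi-isomorphism between the two simple complexes.

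Next I would invoke the general double-complex principle recalled in Proposition \ref{prop.rowex}: a morphism of double complexes that induces an isomorphism on vertical cohomology column by column also induces an isomorphism on the cohomology of the associated simple complexes. (Concretely, one may filter both double complexes by column degree and compare $E_1$-pages of the resulting spectral sequences, which converge to the $D$-cohomology because each double complex is concentrated in nonnegative bidegrees and the good cover $\mathfrak{U}$ is finite, so no convergence issues arise.) Applying this principle to the inclusion of double complexes
\[
C^\bullet(\mathfrak{U};\Omega^\bullet_\ms) \hookrightarrow C^\bullet(\pi^{-1}\mathfrak{U};\Omega^\bullet),
\]
one column in degree $p$ is the product over $(\alpha_0,\ldots,\alpha_p)$ of $\Omega^\bullet_\ms(U_{\alpha_0\ldots\alpha_p}) \hookrightarrow \Omega^\bullet(\pi^{-1}U_{\alpha_0\ldots\alpha_p})$. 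By Proposition \ref{prop.lociso}, each such inclusion is a quasi-isomorphism with respect to $d$, and taking products preserves this (the index set is finite since $\mathfrak{U}$ is finite). Hence the inclusion of double complexes is a quasi-isomorphism on each column.

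Combining these steps, the right vertical arrow of the square above is an isomorphism on $D$-cohomology, and then commutativity of the square together with the horizontal isomorphisms from Propositions \ref{prop.simp1} and \ref{prop.simp2} forces the left vertical arrow, i.e.\ the inclusion $\Omega^\bullet_\ms(B)\hookrightarrow \Omega^\bullet(E)$, to induce an isomorphism on cohomology. The only nontrivial input here is Proposition \ref{prop.lociso}, whose proof has already been reduced (via Proposition \ref{prop.fhrnrntfiso}) to the Poincar\'e lemma for multiplicatively structured forms on $\real^n$; the assembly into the present theorem is then essentially formal, and I expect no further obstacle.
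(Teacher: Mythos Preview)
Your proposal is correct and follows essentially the same route as the paper: use Proposition~\ref{prop.lociso} to see that the inclusion of double complexes induces an isomorphism on vertical cohomology, apply the standard comparison principle for double complexes, and then conclude via Propositions~\ref{prop.simp1} and~\ref{prop.simp2}. One small slip: the principle you invoke (a morphism of double complexes inducing an isomorphism on vertical cohomology induces one on $D$-cohomology) is \emph{not} the content of Proposition~\ref{prop.rowex}, which concerns exact rows of an augmented double complex; the paper simply states the comparison principle directly rather than citing~\ref{prop.rowex}, and your parenthetical spectral-sequence justification is the right way to supply it.
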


\section{Fiberwise Truncation}
\label{sec.fibtrunc}

Before we discuss fiberwise (co)truncation, we must first discuss (co)truncation over a point.
Again, we refer to \cite{banagl-derhamintspace} for complete proofs of the facts cited in
this section.
We shall use the Riemannian metric on $F$ to define truncation
$\tau_{<k}$ and cotruncation $\tau_{\geq k}$ of the complex $\Omega^\bullet (F)$.
The bilinear form
\[ \begin{array}{rcl}
(\cdot, \cdot): \Omega^r (F) \times \Omega^r (F) & \longrightarrow & \real, \\
(\omega, \eta) & \mapsto & \int_F \omega \wedge *\eta,
\end{array} \]
where $*$ is the Hodge star, is symmetric and positive definite, thus defines an inner
product on $\Omega^\bullet (F)$.  The codifferential
\[ d^\ast = (-1)^{m(r+1)+1} *d*: \Omega^r (F) \longrightarrow \Omega^{r-1} (F) \]
is the adjoint of the differential $d$, $(d\omega, \eta) = (\omega, d^\ast \eta).$
The classical Hodge decomposition theorem provides orthogonal splittings
\begin{eqnarray*}
\Omega^r (F) & = & \im d^\ast \oplus \Harm^r (F) \oplus \im d, \\
\ker d & = & \Harm^r (F) \oplus \im d, \\
\ker d^\ast & = & \im d^\ast \oplus \Harm^r (F),
\end{eqnarray*}
where $\Harm^r (F) = \ker d \cap \ker d^\ast$ are the closed and coclosed, i.e. harmonic,
forms on $F$. In particular,
\[ \Omega^r (F) = \im d^\ast \oplus \ker d = \ker d^\ast \oplus \im d. \]
Let $k$ be a nonnegative integer.
\begin{defn} \label{def.truncoverpoint}
The \emph{truncation} $\tau_{<k} \Omega^\bullet (F)$ of $\Omega^\bullet (F)$ is the
complex
\[ \tau_{<k} \Omega^\bullet (F) = \cdots \longrightarrow \Omega^{k-2}(F)
\longrightarrow \Omega^{k-1}(F) \stackrel{d^{k-1}}{\longrightarrow} \im d^{k-1}
 \longrightarrow 0 \longrightarrow 0 \longrightarrow \cdots, \]
where $\im d^{k-1} \subset \Omega^k (F)$ is placed in degree $k$.
\end{defn}
The inclusion $\tau_{<k} \Omega^\bullet (F) \subset \Omega^\bullet (F)$ is a 
morphism of complexes.
The induced map on cohomology, $H^r (\tau_{<k} \Omega^\bullet F)\to
H^r (F),$ is an isomorphism for $r<k,$ while $H^r (\tau_{<k} \Omega^\bullet F)=0$ for
$r\geq k$.
\begin{defn} \label{def.cotruncoverpoint}
The \emph{cotruncation} $\tau_{\geq k} \Omega^\bullet (F)$ of $\Omega^\bullet (F)$ is the
complex
\[ \tau_{\geq k} \Omega^\bullet (F) = \cdots \longrightarrow 0
\longrightarrow 0 \longrightarrow \ker d^\ast
 \stackrel{d^k|}{\longrightarrow} \Omega^{k+1}(F) 
  \stackrel{d^{k+1}}{\longrightarrow} \Omega^{k+2}(F) \longrightarrow \cdots, \]
where $\ker d^\ast \subset \Omega^k (F)$ is placed in degree $k$.
\end{defn}
The inclusion $\tau_{\geq k} \Omega^\bullet (F) \subset \Omega^\bullet (F)$ is a
morphism of complexes. By construction, we have $H^r (\tau_{\geq k} \Omega^\bullet F)=0$
for $r<k$ and the inclusion 
$\tau_{\geq k} \Omega^\bullet (F) \hookrightarrow \Omega^\bullet (F)$
induces an isomorphism $H^r (\tau_{\geq k} \Omega^\bullet F)
\stackrel{\cong}{\longrightarrow} H^r (F)$ in the range $r\geq k$.
A key advantage of cotruncation over truncation is that $\tau_{\geq k} \Omega^\bullet F$
is a subalgebra of $(\Omega^\bullet F,\wedge)$, whereas $\tau_{<k} \Omega^\bullet F$ is not.
\begin{prop} \label{prop.indepriemmetric}
The isomorphism type of $\tau_{\geq k} \Omega^\bullet F$ in the category of cochain complexes
is independent of the Riemannian metric on $F$.
\end{prop}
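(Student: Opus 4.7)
The plan is to exhibit a canonical, metric-independent cochain complex to which $\tau_{\geq k} \Omega^\bullet F$ is isomorphic for any choice of metric. The only place in Definition \ref{def.cotruncoverpoint} where the Riemannian structure enters is the subspace $\ker d^\ast \subset \Omega^k(F)$ in degree $k$; the differential $d$ and all groups $\Omega^{k+j}(F)$ for $j\geq 1$ are metric-free. So it suffices to recognize $\ker d^\ast \subset \Omega^k(F)$ as a metric-dependent realization of an intrinsically defined quotient.

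First, I would introduce the metric-independent complex
\[ \widetilde{\tau}_{\geq k} \Omega^\bullet(F): \cdots \to 0 \to \Omega^k(F)/\im d^{k-1} \xrightarrow{\,\bar d\,} \Omega^{k+1}(F) \xrightarrow{d^{k+1}} \Omega^{k+2}(F) \to \cdots, \]
where $\bar d$ is well defined because $d\circ d=0$ forces $d$ to vanish on $\im d^{k-1}$. Manifestly $\widetilde{\tau}_{\geq k} \Omega^\bullet(F)$ depends only on the underlying smooth manifold, not on the metric.

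Next, given a Riemannian metric $g$ on $F$, I would construct a natural cochain map
\[ \phi: \tau_{\geq k} \Omega^\bullet(F) \longrightarrow \widetilde{\tau}_{\geq k} \Omega^\bullet(F) \]
by declaring $\phi$ to be the identity in degrees $>k$ and, in degree $k$, the restriction to $\ker d^\ast$ of the quotient projection $q: \Omega^k(F) \twoheadrightarrow \Omega^k(F)/\im d^{k-1}$. The Hodge decomposition gives the splitting $\Omega^k(F) = \ker d^\ast \oplus \im d$, so the restricted quotient map $q|_{\ker d^\ast}$ is a linear isomorphism onto $\Omega^k(F)/\im d^{k-1}$. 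To see that $\phi$ is a chain map, note that the only non-trivial compatibility occurs between degrees $k$ and $k+1$, where by construction $\bar d \circ q = d$, so for $\omega \in \ker d^\ast$ one has $\bar d(\phi(\omega)) = \bar d(q(\omega)) = d\omega$, which is exactly the value of the differential in $\tau_{\geq k}\Omega^\bullet(F)$ applied to $\omega$. Hence $\phi$ is an isomorphism of cochain complexes.

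Finally, given two metrics $g, g'$ on $F$, composing the isomorphism associated to $g$ with the inverse of the isomorphism associated to $g'$ yields a canonical isomorphism between the corresponding cotruncations, proving the proposition. There is no real obstacle; the only point that requires attention is the verification that the restriction of $q$ to $\ker d^\ast$ is bijective, which reduces exactly to the Hodge splitting $\Omega^k(F) = \ker d^\ast \oplus \im d$ recalled at the beginning of Section \ref{sec.fibtrunc}.
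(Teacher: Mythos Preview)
Your argument is correct. The paper itself does not supply a proof of this proposition, deferring instead to \cite{banagl-derhamintspace}; your approach---replacing the metric-dependent subspace $\ker d^\ast \subset \Omega^k(F)$ by the metric-free quotient $\Omega^k(F)/\im d^{k-1}$ via the Hodge splitting $\Omega^k(F) = \ker d^\ast \oplus \im d$---is the natural one and is essentially what one finds in that reference.
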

\begin{lemma} \label{lem.isometrypreservcotrunc}
Let $f:F\to F$ be a smooth self-map. \\
\noindent (1) $f$ induces an endomorphism $f^\ast$ of $\tau_{<k} \Omega^\bullet F$. \\
\noindent (2) If $f$ is an isometry, then $f$ induces an automorphism $f^\ast$
 of $\tau_{\geq k}  \Omega^\bullet F$.
\end{lemma}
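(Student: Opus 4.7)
The plan is entirely formal: the only facts we need are that pullback commutes with $d$ for any smooth self-map, and with $d^\ast$ for any isometry. The rest is bookkeeping against Definitions \ref{def.truncoverpoint} and \ref{def.cotruncoverpoint}.

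For part (1), recall that $\tau_{<k} \Omega^\bullet (F)$ agrees with $\Omega^\bullet(F)$ in degrees $<k-1$, agrees with $\Omega^{k-1}(F)$ in degree $k-1$, and equals $\im d^{k-1}\subset \Omega^k(F)$ in degree $k$, vanishing above. On degrees $<k$, there is nothing to check beyond the fact that $f^\ast: \Omega^r(F)\to \Omega^r(F)$ is a well-defined linear map. So the only thing to verify is that $f^\ast$ carries the degree-$k$ piece $\im d^{k-1}$ into itself. If $\omega = d^{k-1}\eta$ for some $\eta\in \Omega^{k-1}(F)$, then, since pullback of forms commutes with the exterior derivative,
\[ f^\ast \omega = f^\ast d^{k-1}\eta = d^{k-1} (f^\ast \eta) \in \im d^{k-1}. \]
The same identity shows $f^\ast$ is a chain map, proving (1).

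For part (2), $\tau_{\geq k}\Omega^\bullet(F)$ agrees with $\Omega^\bullet(F)$ in degrees $>k$, equals $\ker d^\ast \subset \Omega^k(F)$ in degree $k$, and vanishes below. The only nontrivial point is that $f^\ast$ preserves $\ker d^\ast$. Since $f$ is an isometry (hence a diffeomorphism) of the oriented Riemannian manifold $F$, its pullback commutes with the Hodge star up to the sign of the Jacobian determinant: $f^\ast \ast = \varepsilon \ast f^\ast$ with $\varepsilon = \pm 1$ independent of the form's degree. Because $\ast$ appears twice in the formula $d^\ast = (-1)^{m(r+1)+1}\ast d\ast$, the sign $\varepsilon$ cancels, and combined with $f^\ast d = d f^\ast$ this yields $f^\ast d^\ast = d^\ast f^\ast$. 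Consequently, if $d^\ast \omega=0$ then $d^\ast f^\ast \omega = f^\ast d^\ast \omega = 0$, so $f^\ast$ restricts to an endomorphism of $\ker d^\ast$, and $f^\ast$ is a chain endomorphism of $\tau_{\geq k}\Omega^\bullet(F)$.

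To upgrade ``endomorphism'' to ``automorphism,'' note that $f^{-1}$ is again an isometry, so by the argument just given $(f^{-1})^\ast$ is also a chain endomorphism of $\tau_{\geq k}\Omega^\bullet (F)$. Since $(f^{-1})^\ast \circ f^\ast = \id$ and $f^\ast \circ (f^{-1})^\ast = \id$ on every $\Omega^r(F)$, the same identities hold on the subspaces $\ker d^\ast$ and $\Omega^r(F)$ ($r>k$) making up $\tau_{\geq k}\Omega^\bullet (F)$. Thus $f^\ast$ is an automorphism in the category of cochain complexes, establishing (2). The only mildly subtle step is the verification that $f^\ast$ commutes with $d^\ast$, which is the standard observation that the orientation-sign cancels.
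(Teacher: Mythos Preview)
Your argument is correct. The paper itself does not supply a proof of this lemma; at the start of Section~\ref{sec.fibtrunc} it refers all proofs to the companion paper \cite{banagl-derhamintspace}. Your approach---checking that $f^\ast$ preserves $\im d^{k-1}$ via $f^\ast d = d f^\ast$, and that for an isometry $f^\ast$ preserves $\ker d^\ast$ because the orientation sign in $f^\ast \ast = \varepsilon \ast f^\ast$ cancels in $d^\ast = \pm \ast d \ast$---is the standard one and is almost certainly what appears in that reference.
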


We shall next define the \emph{fiberwise truncation}
$\ft_{<t} \oms^\bullet (\real^n) \subset \oms^\bullet (\real^n)$ and the 
\emph{fiberwise cotruncation} $\ft_{\geq t} \oms^\bullet (\real^n) \subset
\oms (\real^n),$ depending on an integer $t$. 
Set
\[ \ft_{<t} \oms^\bullet (\real^n) = \{ \omega \in \Omega^\bullet (\real^n \times F) ~|~
  \omega = \sum_j \pi^\ast_1 \eta_j \wedge \pi^\ast_2 \gamma_j, \]
\[ \hspace{4cm} \eta_j \in \Omega^\bullet (\real^n),~ 
\gamma_j \in \tau_{<t} \Omega^\bullet (F) \}. \]
The complex $\ft_{<t} \oms^\bullet (\real^n)$ is a
subcomplex of $\oms^\bullet (\real^n).$ Define
\[ \ft_{\geq t} \oms^\bullet (\real^n) = \{ \omega \in \Omega^\bullet (\real^n \times F) ~|~
  \omega = \sum_j \pi^\ast_1 \eta_j \wedge \pi^\ast_2 \gamma_j, \]
\[ \hspace{4cm} \eta_j \in \Omega^\bullet (\real^n),~ 
\gamma_j \in \tau_{\geq t} \Omega^\bullet (F) \}. \]
Again, this is a subcomplex of $\oms^\bullet (\real^n)$. 
Similarly, a subcomplex
$\ft_{<t} \Omega^\bullet_{\ms} (U_{\alpha_0 \ldots \alpha_p}) \subset
\Omega^\bullet_{\ms} (U_{\alpha_0 \ldots \alpha_p})$ of fiberwise truncated
multiplicatively structured forms on $\pi^{-1}(U_{\alpha_0 \ldots \alpha_p})$ is given
by requiring every $\gamma_j$ to lie in $\tau_{<t} \Omega^\bullet (F).$ This is well-defined
by the transformation law (\ref{equ.transformlaw}) together with Lemma \ref{lem.isometrypreservcotrunc} (1).
A subcomplex
$\ft_{\geq t} \Omega^\bullet_{\ms} (U_{\alpha_0 \ldots \alpha_p}) \subset
\Omega^\bullet_{\ms} (U_{\alpha_0 \ldots \alpha_p})$ of fiberwise cotruncated
multiplicatively structured forms on $\pi^{-1}(U_{\alpha_0 \ldots \alpha_p})$ is given
by requiring every $\gamma_j$ to lie in $\tau_{\geq t} \Omega^\bullet (F).$ This is well-defined
by the transformation law and Lemma \ref{lem.isometrypreservcotrunc} (2). (At this point, it is
used that the transition functions of the bundle are isometries.)

Let
$S: \real^{n-1} \times F \hookrightarrow \real^n \times F,~
  Q: \real^n \times F \longrightarrow \real^{n-1} \times F$
be as in Section \ref{sec.fibharmflat}.
The induced map $S^\ast: \oms^\bullet (\real^n)\rightarrow \oms^\bullet (\real^{n-1})$ restricts
to a map
\[ S^\ast: \ft_{<t} \oms^\bullet (\real^n)\longrightarrow 
   \ft_{<t} \oms^\bullet (\real^{n-1}). \]
The induced map
$Q^\ast: \oms^\bullet (\real^{n-1})\rightarrow \oms^\bullet (\real^n)$ restricts
to a map
\[ Q^\ast: \ft_{<t} \oms^\bullet (\real^{n-1})\longrightarrow 
   \ft_{<t} \oms^\bullet (\real^n). \]

\begin{lemma} \label{lem.821}
The maps
\[ \xymatrix{\ft_{<t} \oms^\bullet (\real^n) & \ft_{<t} \oms^\bullet (\real^{n-1}) 
\ar@<1ex>[l]^{Q^\ast}  \ar@<1ex>[l];[]^{S^\ast}
} \]
are chain homotopy inverses of each other and thus induce mutually inverse
isomorphisms
\[ \xymatrix{H^\bullet (\ft_{<t} \oms^\bullet (\real^n)) & 
  H^\bullet (\ft_{<t} \oms^\bullet (\real^{n-1})) 
\ar@<1ex>[l]^{Q^\ast}  \ar@<1ex>[l];[]^{S^\ast}
} \]
on cohomology.
\end{lemma}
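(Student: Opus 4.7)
Since $QS = \operatorname{id}_{\real^{n-1} \times F}$, we immediately obtain $S^\ast Q^\ast = \operatorname{id}$ on $\ft_{<t} \oms^\bullet (\real^{n-1})$. The remaining task is to construct a chain homotopy $K: \ft_{<t} \oms^\bullet (\real^n) \to \ft_{<t} \oms^{\bullet-1} (\real^n)$ satisfying $dK + Kd = \operatorname{id} - Q^\ast S^\ast$.

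My approach is to write down the standard de Rham chain homotopy on $\Omega^\bullet(\real^n \times F)$ and verify that it restricts to the fiberwise truncated subcomplex. Consider the smooth homotopy
\[ H: [0,1] \times \real^n \times F \longrightarrow \real^n \times F, \quad H(s,t,u,x) = (st, u, x), \]
interpolating between $H(0,\cdot) = SQ$ and $H(1,\cdot) = \operatorname{id}$. Define $K(\omega)$ to be integration over $[0,1]$ of $\iota_{\partial/\partial s} H^\ast \omega$. By the classical Cartan-style argument, $dK + Kd = \operatorname{id} - (SQ)^\ast = \operatorname{id} - Q^\ast S^\ast$ as operators on $\Omega^\bullet(\real^n \times F)$. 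So far nothing is new; the issue is preservation of structure.

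The key observation — and what I see as the crux of the lemma — is that $H$ acts as the identity on the fiber factor, i.e. $\pi_2 \circ H = \pi_F$, where $\pi_F$ is the projection $[0,1] \times \real^n \times F \to F$. For a multiplicatively structured form $\omega = \sum_j \pi^\ast_1 \eta_j \wedge \pi^\ast_2 \gamma_j$ with $\gamma_j \in \tau_{<t}\Omega^\bullet(F)$, one computes
\[ H^\ast \omega = \sum_j \tilde{H}^\ast \eta_j \wedge \pi_F^\ast \gamma_j, \qquad \tilde{H}(s,t,u)=(st,u). \]
The factor $\pi_F^\ast \gamma_j$ contains no $ds$, so $\iota_{\partial/\partial s}$ only sees the base piece; integration over $s$ then pulls the unchanged fiber factor through to give
\[ K(\omega) = \sum_j \pi^\ast_1 \bigl( \textstyle\int_0^1 \iota_{\partial/\partial s} \tilde{H}^\ast \eta_j \, ds \bigr) \wedge \pi^\ast_2 \gamma_j. \]
This is manifestly multiplicatively structured, and because the fiber factors $\gamma_j$ are left untouched, they remain in $\tau_{<t}\Omega^\bullet(F)$. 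Hence $K$ restricts to an operator on $\ft_{<t}\oms^\bullet(\real^n)$, establishing the chain homotopy.

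The main obstacle to anticipate is well-definedness: the decomposition of $\omega$ as a sum $\sum_j \pi^\ast_1 \eta_j \wedge \pi^\ast_2 \gamma_j$ is not unique. However, $K(\omega)$ is defined intrinsically from $\omega$ via the pullback $H^\ast$ and integration, so it does not depend on any chosen decomposition; the computation above merely certifies that the resulting form lies in the desired subcomplex. A minor but worthwhile check is that $\tau_{<t}\Omega^\bullet(F)$ is genuinely closed under $d$ (so that $\ft_{<t}\oms^\bullet(\real^n)$ is a subcomplex to begin with), which follows directly from Definition \ref{def.truncoverpoint}. With these pieces in place, $S^\ast$ and $Q^\ast$ are mutually inverse chain homotopy equivalences, and the induced isomorphism statement on cohomology is immediate.
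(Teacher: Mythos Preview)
Your proof is correct. The paper itself does not give a proof of this lemma, deferring instead to \cite{banagl-derhamintspace}; but your argument is exactly the natural one and almost certainly coincides with what appears there: you take the standard de Rham homotopy operator used for Proposition~\ref{prop.rnfrnm1f} and observe that, because the homotopy $H$ is the identity on the fiber factor, the operator $K$ leaves the $\gamma_j$'s untouched and hence preserves the truncated subcomplex $\ft_{<t}\oms^\bullet(\real^n)$. Your remark that $K$ is defined intrinsically (so independence from the chosen decomposition is automatic) is the right way to dispatch the well-definedness concern.
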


As in Section \ref{sec.fibharmflat}, let $S_0: F = \{ 0 \} \times F \hookrightarrow
\real^n \times F$ be the inclusion at $0$. The induced map
$S^\ast_0: \oms^\bullet (\real^n) \rightarrow \Omega^\bullet (F)$ restricts to a map
$S^\ast_0: \ft_{<t} \oms^\bullet (\real^n) \longrightarrow \tau_{<t} \Omega^\bullet (F).$
The map $\pi^\ast_2: \Omega^\bullet (F) \rightarrow \oms^\bullet (\real^n)$ restricts
to a map
$\pi^\ast_2: \tau_{<t} \Omega^\bullet (F) \rightarrow \ft_{<t}\oms^\bullet (\real^n).$
An induction on $n$ using Lemma \ref{lem.821} shows:
\begin{lemma}(Poincar\'e Lemma for truncated forms.) \label{lem.822}
The maps
\[ \xymatrix{\ft_{<t} \oms^\bullet (\real^n) & \tau_{<t} \Omega^\bullet (F) 
\ar@<1ex>[l]^{\pi^\ast_2}  \ar@<1ex>[l];[]^{S^\ast_0}
} \]
are chain homotopy inverses of each other and thus induce mutually inverse
isomorphisms
\[ \xymatrix{H^r (\ft_{<t} \oms^\bullet (\real^n)) & 
  H^r (\tau_{<t} \Omega^\bullet (F)) \cong 
\mbox{$\begin{cases} H^r (F),& r<t \\ 0, & r\geq t. \end{cases}$} 
\ar@<1ex>[l]^<<<{\pi^\ast_2}  \ar@<1ex>[l];[]^<<<<{S^\ast_0}
} \]
on cohomology.
\end{lemma}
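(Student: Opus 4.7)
The plan is to prove the lemma by induction on $n$, mirroring the proof of Proposition \ref{prop.rntfharmf} but substituting Lemma \ref{lem.821} for Proposition \ref{prop.rnfrnm1f}, and noting that all maps involved preserve the fiberwise truncation condition (i.e. the requirement $\gamma_j \in \tau_{<t}\Omega^\bullet(F)$) because they act only on the base factor.

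For the base case $n=0$, the space $\real^0 \times F$ is canonically identified with $F$, and $\Omega^\bullet(\real^0) = \real$ sits in degree zero. Any form $\sum_j \pi^\ast_1 \eta_j \wedge \pi^\ast_2 \gamma_j$ in $\oms^\bullet(\real^0)$ therefore reduces to $\pi_2^\ast \gamma$ for a single $\gamma \in \Omega^\bullet(F)$, so $\pi_2^\ast$ identifies $\oms^\bullet(\real^0) \cong \Omega^\bullet(F)$ and $\ft_{<t}\oms^\bullet(\real^0) \cong \tau_{<t}\Omega^\bullet(F)$. Since $S_0: F \hookrightarrow \real^0 \times F = F$ is the identity, both $S_0^\ast$ and $\pi_2^\ast$ become the identity under this identification and are trivially chain homotopy inverses.

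For the inductive step, assume the result for $\real^{n-1}$. The inclusion $S_0^{(n)}: F \hookrightarrow \real^n \times F$ factors as $S_0^{(n)} = S \circ S_0^{(n-1)}$, where $S$ is the map of Section \ref{sec.fibharmflat}. Dually, writing $\pi_2^{(n)}$ and $\pi_2^{(n-1)}$ for the two second-factor projections, the relation $\pi_2^{(n)} = \pi_2^{(n-1)} \circ Q$ gives $\pi_2^{(n)\ast} = Q^\ast \circ \pi_2^{(n-1)\ast}$. Lemma \ref{lem.821} provides chain homotopy inverses $S^\ast$ and $Q^\ast$ between $\ft_{<t}\oms^\bullet(\real^n)$ and $\ft_{<t}\oms^\bullet(\real^{n-1})$, while the inductive hypothesis gives chain homotopy inverses $S_0^{(n-1)\ast}$ and $\pi_2^{(n-1)\ast}$ between $\ft_{<t}\oms^\bullet(\real^{n-1})$ and $\tau_{<t}\Omega^\bullet(F)$. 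Composing these two pairs yields $S_0^{(n)\ast}$ and $\pi_2^{(n)\ast}$ as chain homotopy inverses, which is the desired conclusion.

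The cohomology identification $H^r(\tau_{<t}\Omega^\bullet F) \cong H^r(F)$ for $r<t$ and $0$ for $r\geq t$ is precisely what was observed just after Definition \ref{def.truncoverpoint}, so no further argument is needed. I do not anticipate any serious obstacle: the only subtle point is checking that each of $S^\ast$, $Q^\ast$, $S_0^{(n-1)\ast}$, and $\pi_2^{(n-1)\ast}$ indeed restricts to the fiberwise truncated subcomplexes, but this is immediate from the multiplicative form of the generators together with the fact that these maps act on the base factor only, leaving the $\gamma_j$ in $\tau_{<t}\Omega^\bullet(F)$ untouched.
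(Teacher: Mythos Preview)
Your proposal is correct and follows exactly the approach indicated in the paper, which simply states that the lemma follows by induction on $n$ using Lemma \ref{lem.821} (mirroring the proof of Proposition \ref{prop.rntfharmf}). You have merely spelled out the details of that induction, including the base case and the factorizations $S_0^{(n)} = S \circ S_0^{(n-1)}$ and $\pi_2^{(n)\ast} = Q^\ast \circ \pi_2^{(n-1)\ast}$, which the paper leaves implicit.
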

An analogous argument, replacing $\tau_{<t} \Omega^\bullet (F)$ by
$\tau_{\geq t} \Omega^\bullet (F),$ proves a version for fiberwise cotruncation:
\begin{lemma}(Poincar\'e Lemma for cotruncated forms.) \label{lem.822cotrunc}
The maps
\[ \xymatrix{\ft_{\geq t} \oms^\bullet (\real^n) & \tau_{\geq t} \Omega^\bullet (F) 
\ar@<1ex>[l]^{\pi^\ast_2}  \ar@<1ex>[l];[]^{S^\ast_0}
} \]
are chain homotopy inverses of each other and thus induce mutually inverse
isomorphisms
\[ \xymatrix{H^r (\ft_{\geq t} \oms^\bullet (\real^n)) & 
  H^r (\tau_{\geq t} \Omega^\bullet (F)) \cong 
\mbox{$\begin{cases} H^r (F),& r\geq t \\ 0, & r<t. \end{cases}$} 
\ar@<1ex>[l]^<<<{\pi^\ast_2}  \ar@<1ex>[l];[]^<<<<{S^\ast_0}
} \]
on cohomology.
\end{lemma}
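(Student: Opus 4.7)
The plan is to follow the proof of Lemma \ref{lem.822} essentially verbatim, with $\tau_{\geq t}$ replacing $\tau_{<t}$ throughout. First I would establish the cotruncated analog of Lemma \ref{lem.821}, asserting that the restrictions
\[ \xymatrix{\ft_{\geq t} \oms^\bullet (\real^n) & \ft_{\geq t} \oms^\bullet (\real^{n-1})
\ar@<1ex>[l]^{Q^\ast}  \ar@<1ex>[l];[]^{S^\ast} } \]
are well-defined and mutually chain homotopy inverse. Well-definedness is immediate: since $S = s \times \id_F$ and $Q = q \times \id_F$ act as the identity on the fiber factor, pullback by $S$ or $Q$ of a multiplicatively structured form $\sum \pi_1^\ast \eta_j \wedge \pi_2^\ast \gamma_j$ preserves each factor $\gamma_j \in \tau_{\geq t}\Omega^\bullet(F)$ unchanged. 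The chain homotopy of Lemma \ref{lem.821} is constructed by fiber integration along the $\real$-factor of $\real \times \real^{n-1}$; because this operator affects only the $\eta_j$ factors and leaves each $\gamma_j$ untouched, it automatically restricts to the fiberwise cotruncated subcomplex.

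With the cotruncated version of Lemma \ref{lem.821} in hand, the induction on $n$ proceeds as in the proof of Lemma \ref{lem.822}. The base case $n=0$ is the tautology $\ft_{\geq t}\oms^\bullet(\real^0) = \tau_{\geq t}\Omega^\bullet(F)$ with $\pi_2^\ast$ and $S_0^\ast$ the identity. For the inductive step one uses the factorizations $\pi_2^{(n)} = \pi_2^{(n-1)} \circ Q$ and $S_0^{(n)} = S \circ S_0^{(n-1)}$, whence
\[ \pi_2^{(n)\ast} = Q^\ast \circ \pi_2^{(n-1)\ast}, \qquad S_0^{(n)\ast} = S_0^{(n-1)\ast} \circ S^\ast; \]
combining the inductive hypothesis with the cotruncated Lemma \ref{lem.821} yields the desired chain homotopies $S_0^{(n)\ast} \circ \pi_2^{(n)\ast} \simeq \id$ and $\pi_2^{(n)\ast} \circ S_0^{(n)\ast} \simeq \id$. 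The stated cohomology computation then follows from the observation, built into Definition \ref{def.cotruncoverpoint} via the Hodge decomposition, that $\tau_{\geq t}\Omega^\bullet(F) \hookrightarrow \Omega^\bullet(F)$ induces an isomorphism on $H^r$ for $r \geq t$ and vanishes for $r < t$.

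The only point requiring real attention, and hence the main obstacle, is the preservation of the cotruncated subcomplex by the homotopy operator of Lemma \ref{lem.821}. Once one notes that this operator is built entirely out of operations in the $\real^n$-direction (fiber integration along the interval coordinate) and is therefore natural with respect to the fiber factor, the fact that $\gamma_j$ stays in $\tau_{\geq t}\Omega^\bullet(F)$ is automatic. No new analytic input beyond the Hodge decomposition already invoked in defining cotruncation is needed, and the isometry hypothesis on transition functions plays no role here, since we are working over a single Euclidean chart.
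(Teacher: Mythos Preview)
Your proposal is correct and follows exactly the route the paper takes: the paper's proof of Lemma~\ref{lem.822cotrunc} is simply the statement that ``an analogous argument, replacing $\tau_{<t}\Omega^\bullet(F)$ by $\tau_{\geq t}\Omega^\bullet(F)$,'' proves the cotruncated version, and you have spelled out precisely that analogy, including the cotruncated form of Lemma~\ref{lem.821} and the induction on $n$. Your observation that the homotopy operator acts only in the $\real^n$-direction and therefore preserves the fiber factors $\gamma_j \in \tau_{\geq t}\Omega^\bullet(F)$ is the right justification and matches the underlying mechanism of the paper's argument.
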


\section{\v{C}ech Presheaves}

Let $M$ be a smooth manifold and $\mathfrak{V} = \{ V_\alpha \}$ be a good open
cover of $M$. The cover $\mathfrak{V}$ gives rise to a category $C(\mathfrak{V})$,
whose objects are all finite intersections $V_{\alpha_0 \ldots \alpha_p}$ of open sets
$V_\alpha$ in $\mathfrak{V}$ and an initial object $\varnothing$, the empty set.
The morphisms are inclusions.
A \emph{\v{C}ech presheaf $\der$ on $\mathfrak{V}$} is a contravariant
functor $\der: C(\mathfrak{V}) \to \real \text{-}\operatorname{MOD}$ into the
category $\real \text{-}\operatorname{MOD}$ of real vector spaces and linear maps,
such that $\der (\varnothing)=0$.
A \emph{homomorphism} $\der \to \bG$ of \v{C}ech presheaves on $\mathfrak{V}$ is a
natural transformation from $\der$ to $\bG$.
The homomorphism is an isomorphism if
$\der (V_{\alpha_0 \ldots \alpha_p}) \to \bG (V_{\alpha_0 \ldots \alpha_p})$ is an
isomorphism for every object $V_{\alpha_0 \ldots \alpha_p}$ in $C(\mathfrak{V})$.
Let $H$ be a real vector space. The presheaf $\der$ is said to be \emph{locally constant
with group $H$}, if all $\der (V_{\alpha_0 \ldots \alpha_p})$, for
$V_{\alpha_0 \ldots \alpha_p} \not= \varnothing,$ are isomorphic to $H$
and all linear maps $\der (V_{\alpha_0 \ldots \alpha_p}) \to \der (V_{\beta_0 \ldots \beta_q})$
for nonempty inclusions $V_{\beta_0 \ldots \beta_q} \subset V_{\alpha_0 \ldots \alpha_p}$ are
isomorphisms. A \v{C}ech presheaf $\der$ on $\mathfrak{V}$ possesses 
$p$-cochains
\[ C^p (\mathfrak{V}; \der) = \prod_{\alpha_0, \cdots, \alpha_p} 
  \der (V_{\alpha_0 \ldots \alpha_p}) \]
and a \v{C}ech differential $\delta: C^p (\mathfrak{V}; \der) \to
C^{p+1} (\mathfrak{V}; \der)$ making $C^\bullet (\mathfrak{V}; \der)$ into a complex.
Its cohomology $H^p (\mathfrak{V}; \der)$ is the \emph{\v{C}ech cohomology
of the cover $\mathfrak{V}$ with values in $\der$.} 
An isomorphism
$\der \stackrel{\cong}{\longrightarrow} \bG$ of two presheaves on $\mathfrak{V}$
induces an isomorphism of cohomology groups
$H^\bullet (\mathfrak{V}; \der) \stackrel{\cong}{\longrightarrow}
H^\bullet (\mathfrak{V}; \bG).$ 
Let $\bL$ be a locally constant sheaf on $M$. Then $\bL$ defines in particular a
locally constant presheaf on $\mathfrak{V}$ (since $\mathfrak{V}$ is good,
$\bL|_V$ is constant for every $V\in Ob C(\mathfrak{V})$) so that
$H^\bullet (\mathfrak{V};\bL)$ is defined. Then, as
$H^q (V_{\alpha_0 \ldots \alpha_p};\bL)=0$ for $q>0,$ there is a canonical
isomorphism 
$H^\bullet_{\operatorname{Sh}} (M;\bL)\cong H^\bullet (\mathfrak{V};\bL)$
according to \cite[Thm. III.4.13]{bredon}, where
$H^\bullet_{\operatorname{Sh}} (M;\bL)$ denotes sheaf cohomology.
In particular, the \v{C}ech cohomology groups of $\mathfrak{V}$ with values
in $\bL$ are independent of the good cover used to define them. By
\cite[Cor. III.4.12]{bredon}, these groups are furthermore canonically isomorphic
to the \v{C}ech cohomology of $M$ with coefficients in $\bL$, 
$\check{H}^\bullet (M;\bL)$. 
If we view $\bL$ as a local coefficient system on $M$, then the singular
cohomology $H^\bullet (M;\bL)$ is defined and a canonical isomorphism
$H^\bullet_{\operatorname{Sh}} (M;\bL) \cong H^\bullet (M;\bL)$ is 
provided by \cite[Thm. III.1.1]{bredon}. \\

Let us return to the good cover $\fU$ on our base space $B$. We shall define
three \v{C}ech presheaves on $\fU$. Define $\der^q (F)$ by
\[ \der^q (F)(U) = H^q (\pi^{-1} U),~ U\in Ob C(\fU). \]
The structural morphisms associated to inclusions are given by restriction of
forms. Since all nonempty objects $U$ in $C(\fU)$ are diffeomorphic to $\real^n$, and
the bundle $\pi: E\to B$ trivializes over every such $U$, the classical 
Poincar\'e lemma implies that $\der^q (F)$ is a locally constant presheaf with
group $H^q (F),$ the de Rham cohomology of the fiber. Define the presheaf
$\der^q_\ms (F)$ by
\[ \der^q_\ms (F)(U) = H^q (\Omega^\bullet_\ms (U)),~ U\in Ob C(\fU). \]
According to Proposition \ref{prop.lociso}, the inclusion
$\Omega^\bullet_\ms (U) \subset \Omega^\bullet (\pi^{-1} U)$ induces
an isomorphism $\der^q_\ms (F)(U) \stackrel{\cong}{\longrightarrow}
\der^q (F)(U)$ for every nonempty $U\in Ob C(\fU)$. If
$V\in Ob C(\fU)$ is an open set with $V\subset U$, then the commutative
square
\[ \xymatrix{
\Omega^\bullet_\ms (U) \ar@{^{(}->}[r] \ar[d]_{\operatorname{restr}}&
 \Omega^\bullet (\pi^{-1} U) \ar[d]^{\operatorname{restr}} \\
\Omega^\bullet_\ms (V) \ar@{^{(}->}[r] &
 \Omega^\bullet (\pi^{-1} V) 
} \]
induces a commutative square
\[ \xymatrix{
\der^q_\ms (F)(U) \ar[r]^{\cong} \ar[d]_{\operatorname{restr}} &
  \der^q (F)(U) \ar[d]^{\operatorname{restr}} \\
\der^q_\ms (F)(V) \ar[r]^{\cong} &
  \der^q (F)(V). 
} \]
Thus the inclusion of multiplicatively structured forms induces an isomorphism
\[ \der^q_\ms (F) \stackrel{\cong}{\longrightarrow} \der^q (F) \]
of presheaves for every $q$. In particular, $\der^q_\ms (F)$ is also
locally constant with group $H^q (F)$. The isomorphism induces
furthermore an isomorphism
\begin{equation} \label{equ.nabla}
H^p (\fU; \der^q_\ms (F)) \stackrel{\cong}{\longrightarrow}
H^p (\fU; \der^q (F))
\end{equation}
of \v{C}ech cohomology groups. Define the presheaf
$\der^q_{\geq t} (F)$ by
\[ \der^q_{\geq t} (F)(U) = H^q (\ft_{\geq t} \Omega^\bullet_\ms (U)),~ 
  U\in Ob C(\fU), U\not= \varnothing. \]
Since $U \not= \varnothing$ is diffeomorphic to $\real^n$ and the bundle $\pi: E\to B$
trivializes over $U$, the Poincar\'e Lemma \ref{lem.822cotrunc} for
cotruncated forms implies that the restriction $S^\ast_0$ of a form
to the fiber over the origin of $U\cong \real^n$ induces an isomorphism
\[ \xymatrix{
\der^q_{\geq t} (F)(U) \ar[r]^{\cong}_{S^\ast_0} & H^q (F) 
} \]
for $q\geq t,$ whereas $\der^q_{\geq t} (F)(U)=0$ for $q<t$.
If $V \in Ob C(\fU)$ is a nonempty open set with $V\subset U$, then the
commutative diagram
\[ \xymatrix{
\ft_{\geq t} \Omega^\bullet_\ms (U) \ar[r]^{S^\ast_0} 
 \ar[d]_{\operatorname{restr}} &
  \tau_{\geq t} \Omega^\bullet (F) \\
\ft_{\geq t} \Omega^\bullet_\ms (V) \ar[ru]_{S^\ast_0} &
} \]
(assuming, without loss of generality, that the origin of $U$ lies in $V$)
induces a commutative diagram
\[ \xymatrix{
\der^q_{\geq t} (F)(U) \ar[r]^{\cong}_{S^\ast_0} 
 \ar[d]_{\operatorname{restr}} &
  H^q (F) \\
\der^q_{\geq t} (F)(V) \ar[ru]^{\cong}_{S^\ast_0} &
} \]
for $q\geq t$. It follows that the restriction induces an isomorphism
$\der^q_{\geq t} (F)(U) \stackrel{\cong}{\longrightarrow}
 \der^q_{\geq t} (F)(V).$ Therefore, when $q\geq t,$ 
$\der^q_{\geq t} (F)$ is a locally constant presheaf with group
$H^q (F).$ Moreover, the commutative diagram
\[ \xymatrix{
\ft_{\geq t} \Omega^\bullet_\ms (U) \ar[r]^{S^\ast_0} 
 \ar@{^{(}->}[d] &
  \Omega^\bullet (F) \\
\Omega^\bullet_\ms (U) \ar[ru]_{S^\ast_0} &
} \]
induces, for $q\geq t,$ a commutative diagram
\[ \xymatrix{
\der^q_{\geq t} (F)(U) \ar[r]^{\cong}_{S^\ast_0} 
 \ar[d] &
  H^q (F) \\
\der^q_\ms (F)(U), \ar[ru]^{\cong}_{S^\ast_0} &
} \]
using Proposition \ref{prop.rntfharmf}. Thus
$\der^q_{\geq t} (F)(U) \to \der^q_\ms (F)(U)$ is an
isomorphism for $q\geq t.$ Since it commutes with restriction to
smaller open sets, we obtain the following result.

\begin{lemma} \label{lem.3.1}
For $q<t,$ the presheaf $\der^q_{\geq t} (F)$ is trivial,
$\der^q_{\geq t} (F)=0.$ For $q\geq t,$ $\der^q_{\geq t} (F)$
is locally constant with group $H^q (F)$ and the inclusion
$\ft_{\geq t} \Omega^\bullet_\ms (-)\subset \Omega^\bullet_\ms (-)$
induces an isomorphism
\[ \der^q_{\geq t} (F) \stackrel{\cong}{\longrightarrow}
 \der^q_\ms (F) \]
of presheaves.
\end{lemma}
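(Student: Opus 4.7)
The plan is to evaluate the presheaf $\der^q_{\geq t}(F)$ object by object on $C(\fU)$ and then to assemble these calculations into the asserted isomorphism. Every nonempty object $U = U_{\alpha_0 \ldots \alpha_p}$ is diffeomorphic to $\real^n$, the bundle $\pi$ trivializes over $U$, and $\ft_{\geq t}\Omega^\bullet_{\ms}(U)$ was set up precisely so that Lemma~\ref{lem.822cotrunc} (the Poincar\'e lemma for cotruncated forms) applies. Pick a point of $U$ as the origin of its identification with $\real^n$. Then $S^\ast_0$ induces an isomorphism
\[ \der^q_{\geq t}(F)(U) = H^q (\ft_{\geq t}\Omega^\bullet_{\ms}(U)) \stackrel{\cong}{\longrightarrow} H^q(\tau_{\geq t}\Omega^\bullet(F)), \]
and by definition of cotruncation the right-hand side is $H^q(F)$ for $q\geq t$ and $0$ for $q<t$. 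This simultaneously settles the vanishing statement and computes the groups in the locally constant range.

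For local constancy when $q\geq t$, I would take a nonempty inclusion $V\subset U$ in $C(\fU)$, choose the diffeomorphism $U\cong \real^n$ so that its origin lies in $V$, and observe that the obvious restriction $\ft_{\geq t}\Omega^\bullet_{\ms}(U)\to \ft_{\geq t}\Omega^\bullet_{\ms}(V)$ fits into a commutative triangle over the common target $\tau_{\geq t}\Omega^\bullet(F)$ via $S^\ast_0$. Both legs of this triangle are quasi-isomorphisms by Lemma~\ref{lem.822cotrunc}, so the restriction is a quasi-isomorphism as well, i.e.\ $\der^q_{\geq t}(F)(U)\to \der^q_{\geq t}(F)(V)$ is an isomorphism; this is exactly local constancy with group $H^q(F)$.

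To produce the natural transformation $\der^q_{\geq t}(F)\to \der^q_{\ms}(F)$ and show it is an isomorphism for $q\geq t$, I would use the commutative triangle formed by the inclusion $\ft_{\geq t}\Omega^\bullet_{\ms}(U)\hookrightarrow \Omega^\bullet_{\ms}(U)$ together with the two copies of $S^\ast_0$ landing in $\Omega^\bullet(F)$ (with image in $\tau_{\geq t}\Omega^\bullet(F)$ in the cotruncated case). Proposition~\ref{prop.rntfharmf} identifies the lower edge as a quasi-isomorphism, Lemma~\ref{lem.822cotrunc} identifies the upper edge as a quasi-isomorphism, so by two-out-of-three the vertical inclusion also induces an isomorphism on cohomology. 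Naturality of the inclusion with respect to restriction to smaller opens then promotes this to an isomorphism of presheaves.

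There is no real obstacle here: the arguments are local and reduce to the Poincar\'e lemmas already proved in Section~\ref{sec.fibtrunc}. The only mildly delicate bookkeeping is the compatibility of the various $S^\ast_0$ maps when the basepoint is moved — but this is dealt with uniformly by fixing, once and for all, a point of $V$ as the common basepoint of the identifications $U\cong \real^n$ and $V\cong \real^n$.
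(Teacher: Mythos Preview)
Your proposal is correct and follows essentially the same route as the paper: the paper also computes $\der^q_{\geq t}(F)(U)$ via $S^\ast_0$ and Lemma~\ref{lem.822cotrunc}, establishes local constancy by the commutative triangle of $S^\ast_0$ maps over an origin chosen in $V$, and obtains the presheaf isomorphism from the triangle with Proposition~\ref{prop.rntfharmf} and Lemma~\ref{lem.822cotrunc}, exactly as you describe. Your remark about fixing the basepoint in $V$ is precisely the paper's ``assuming, without loss of generality, that the origin of $U$ lies in $V$''.
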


\section{The Spectral Sequence of a Flat, Isometrically
 Structured Bundle}

Let $(K,\delta, d)$ be the double complex
\[ K^{p,q} = C^p (\pi^{-1} \fU; \Omega^q) =
 \prod_{\alpha_0, \cdots, \alpha_p}
 \Omega^q (\pi^{-1} U_{\alpha_0 \ldots \alpha_p}) \]
defined in Section \ref{sec.fibharmflat}. The \emph{spectral
sequence of the fiber bundle} $\pi: E\to B$ is the spectral
sequence $E(K) = \{ E_r, d_r \}$ of $K$. Its $E_1$-term is
\[ E^{p,q}_1 = H^{p,q}_d (K) = 
 \prod_{\alpha_0, \cdots, \alpha_p}
  H^q (\pi^{-1} U_{\alpha_0 \ldots \alpha_p}) =
 C^p (\fU; \der^q (F)). \]
Since $d_1 = \delta$ on $E_1$, the $E_2$-term is
\[ E^{p,q}_2 = H^p (\fU; \der^q (F)) \cong H^p (B; \der^q (F)), \]
the (singular) cohomology of $B$ with values in the local
coefficient system $\der^q (F)$.

\begin{thm} \label{thm.main1}
Let $F$ be a closed, oriented, Riemannian manifold and
$\pi: E\to B$ a flat, smooth fiber bundle over the closed, smooth
base manifold $B$ with fiber $F$ and structure group the
isometries of $F$. Then the spectral sequence with real coefficients of
$\pi: E\to B$ collapses at the $E_2$-term.
\end{thm}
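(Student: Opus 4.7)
The plan is to produce a small subcomplex of $\Omega^\bullet_\ms(B)$, which I will call the \emph{fiberwise harmonic} subcomplex $\Omega^\bullet_{\ms,\Harm}(B)$, with two virtues: (a) it is quasi-isomorphic to $\Omega^\bullet_\ms(B)$, hence computes $H^\bullet(E)$ by Theorem \ref{thm.fhcomputescohtotspace}, and (b) it splits as a direct sum of subcomplexes indexed by the fiber degree. From these two facts the decomposition $H^n(E) \cong \bigoplus_{p+q=n} H^p(B; \der^q(F))$ drops out, and a dimension count then forces collapse of the Leray--Serre spectral sequence at the $E_2$-term.

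The well-definedness step is where the isometry hypothesis enters in an essential way. I will set $\Omega^\bullet_{\ms,\Harm}(B) \subset \Omega^\bullet_\ms(B)$ to consist of those forms that locally have the shape $\phi_\alpha^\ast \sum_j \pi_1^\ast \eta_j \wedge \pi_2^\ast \gamma_j$ with $\gamma_j \in \Harm^\bullet(F)$. Since isometries commute with the Hodge Laplacian and thus preserve harmonic forms, the transformation law (\ref{equ.transformlaw}) ensures independence of $\alpha$. Because harmonic forms are closed, $d$ acts on such a form only through $d\eta_j$, so one obtains a subcomplex that further splits as $\bigoplus_b \Omega^\bullet_{\ms,\Harm^b}(B)$ by the fiber degree $b$ of the $\gamma_j$. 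Locally on $U = U_{\alpha_0\ldots\alpha_p}$, each summand is identified with $\Omega^{\bullet - b}(U) \otimes \Harm^b(F)$ with differential $d_U \otimes \id$.

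Next I run the bicomplex machinery of Sections \ref{sec.fibharmflat} and \ref{sec.fibtrunc} in parallel on this smaller model. The classical Poincar\'e lemma gives that the $d$-cohomology of $\Omega^\bullet_{\ms,\Harm^b}(U_{\alpha_0\ldots\alpha_p})$ is $\Harm^b(F) \cong H^b(F)$ concentrated in form degree $b$. Hence the $E_1$-page of the spectral sequence of the bicomplex $C^\bullet(\fU; \Omega^\bullet_{\ms,\Harm^b})$ lives on the single row $q = b$, collapses automatically, and the total cohomology is $H^p(B; \der^b(F))$ sitting on the antidiagonal $p + b = n$. A comparable local computation shows that the inclusion $\Omega^\bullet_{\ms,\Harm}(U_{\alpha_0\ldots\alpha_p}) \hookrightarrow \Omega^\bullet_\ms(U_{\alpha_0\ldots\alpha_p})$ is a $d$-quasi-isomorphism (both sides compute $H^\bullet(F)$ via $\pi_2^\ast$ applied to harmonic representatives, using Proposition \ref{prop.rntfharmf}). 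Combined with a generalized Mayer--Vietoris sequence for the harmonic subpresheaf (which I expect to prove verbatim as in Lemma \ref{lem.gmv}, since multiplication by $\pi^\ast \rho_\alpha$ clearly preserves the harmonic fiber condition) and Proposition \ref{prop.rowex}, this lifts to an isomorphism $H^\bullet(\Omega^\bullet_{\ms,\Harm}(B)) \cong H^\bullet(\Omega^\bullet_\ms(B))$.

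Summing over $b$ and invoking Theorem \ref{thm.fhcomputescohtotspace} yields
\[ \dim_\real H^n(E) = \sum_{p+q=n} \dim_\real H^p(B; \der^q(F)) = \sum_{p+q=n} \dim_\real E_2^{p,q}. \]
All groups in sight are finite-dimensional since $B$ and $F$ are closed. Because each $E_\infty^{p,q}$ is a subquotient of $E_2^{p,q}$ and $\sum_{p+q=n} \dim_\real E_\infty^{p,q} = \dim_\real H^n(E)$, the displayed equation forces $E_\infty^{p,q} = E_2^{p,q}$ for every bidegree, i.e., every differential $d_r$ with $r\geq 2$ vanishes. The main obstacle I anticipate is verifying the global well-definedness and subcomplex property of $\Omega^\bullet_{\ms,\Harm}(B)$; once that is pinned down, the rest parallels templates already in place in the paper.
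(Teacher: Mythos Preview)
Your argument is correct and takes a genuinely different route from the paper's own proof.  The paper argues by comparing the multiplicatively structured double complex $K_\ms$ to an auxiliary family of \emph{cotruncated} double complexes $K_{\geq t}$ built from $\ft_{\geq t}\Omega^\bullet_\ms$.  For $t=q$ the map $E^{p,q}_{\geq t,2}\to E^{p,q}_{\ms,2}$ is an isomorphism (Lemma~\ref{lem.3.1}), while the target of $d^{p,q}_{\geq t,2}$ vanishes since $\der^{q-1}_{\geq t}(F)=0$; this kills $d_{\ms,2}$, and an induction on $r$ propagates the vanishing to all $d_{\ms,r}$.  Only at the end is the collapse transferred along the morphism $K_\ms\hookrightarrow K$.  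By contrast, you bypass the cotruncation filtration entirely: restricting to \emph{harmonic} fiber factors gives a subcomplex $\Omega^\bullet_{\ms,\Harm}(B)$ that already splits as $\bigoplus_b \Omega^\bullet_{\ms,\Harm^b}(B)$, and each summand has its $E_1$-page concentrated on a single row, so no induction is needed.  The dimension identity $\dim H^n(E)=\sum_{p+q=n}\dim E_2^{p,q}$ then forces collapse by a standard subquotient count.  Your approach is shorter and more transparent for this particular theorem; the paper's cotruncation argument is less direct here but is the one that generalizes to the intersection-space setting of \cite{banagl-derhamintspace}, where one needs truncation in a range rather than a single harmonic degree.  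The ``main obstacle'' you flag---well-definedness of $\Omega^\bullet_{\ms,\Harm}(B)$ and its Mayer--Vietoris exactness---is handled exactly as you anticipate: isometries preserve $\Harm^\bullet(F)$, and multiplication by $\pi^\ast\rho_\alpha$ for a partition of unity $\{\rho_\alpha\}$ on $B$ leaves the harmonic fiber factors untouched.
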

\begin{proof}
Using multiplicatively structured forms, we first build a smaller model
$K_\ms$ of $K$. The spectral sequences of $K_\ms$ and $K$ will be
shown to be isomorphic (from the $E_2$-term on). In Section
\ref{sec.fibharmflat}, we introduced the multiplicatively structured
\v{C}ech - de Rham double complex
$K_\ms = (C^\bullet (\fU; \Omega^\bullet_\ms), \delta, d)$. In
bidegree $(p,q)$ it is given by
\[ K^{p,q}_\ms = C^p (\fU; \Omega^q_\ms) =
 \prod_{\alpha_0, \cdots, \alpha_p}
 \Omega^q_\ms (U_{\alpha_0 \ldots \alpha_p}). \]
The vertical cohomology of $K_\ms$ is
\[ H^{p,q}_d (K_\ms) = \prod_{\alpha_0, \cdots, \alpha_p}
 H^q (\Omega^\bullet_\ms (U_{\alpha_0 \ldots \alpha_p})) =
 C^p (\fU; \der^q_\ms (F)). \]
The core of the argument is the construction of a filtration of
$K_\ms$ by cotruncated double complexes $K_{\geq t} \subset
K_\ms$, where $t$ is an integer. The group in bidegree $(p,q)$ is
\[ K^{p,q}_{\geq t} = C^p (\fU; (\ft_{\geq t} \Omega^\bullet_\ms)^q) =
 \prod_{\alpha_0, \cdots, \alpha_p}
 (\ft_{\geq t} \Omega^\bullet_\ms (U_{\alpha_0 \ldots \alpha_p}))^q. \]
The vertical differential is given by the (restriction of the) de Rham
differential $d$, and the horizontal differential is given by the
\v{C}ech differential $\delta$. The vertical cohomology of $K_{\geq t}$
is
\[ H^{p,q}_d (K_{\geq t}) = \prod_{\alpha_0, \cdots, \alpha_p}
 H^q (\ft_{\geq t} \Omega^\bullet_\ms (U_{\alpha_0 \ldots \alpha_p})) =
 C^p (\fU; \der^q_{\geq t} (F)). \]
The double complex $K_\ms$ determines a spectral sequence
$E(K_\ms) = \{ E_{\ms, r}, d_{\ms, r} \};$ the 
double complex $K_{\geq t}$ determines a spectral sequence
$E(K_{\geq t}) = \{ E_{\geq t, r}, d_{\geq t, r} \},$
cf. \cite[Thm. 14.14, page 165]{botttu}. The inclusions of complexes
\[ \ft_{\geq t} \Omega^\bullet_\ms (U) \subset \Omega^\bullet_\ms (U)
 \subset \Omega^\bullet (\pi^{-1} U),~ U\in Ob C(\fU), \]
induce inclusions of double complexes
\[ K_{\geq t} \subset K_{\ms} \subset K. \]
A map of double complexes induces a morphism of the associated
spectral sequences. Thus the above inclusions induce morphisms
\[ E(K_{\geq t}) \stackrel{e}{\longrightarrow} E(K_\ms) \stackrel{f}{\longrightarrow}
 E(K). \]
Let us show that the differentials $d_{\ms, 2}$ vanish. This will then
provide the induction basis for an inductive proof that 
all $d_{\ms, r},$ $r\geq 2,$ vanish. The term $E_{\ms,1}$ is given by
\[ E^{p,q}_{\ms, 1} = H^{p,q}_d (K_\ms) = 
 C^p (\fU; \der^q_\ms (F)). \]
Since $d_{\ms, 1} = \delta,$ we have
\[ E^{p,q}_{\ms, 2} = H^p_\delta (\fU; \der^q_\ms (F)), \]
the \v{C}ech cohomology of $\fU$ with values in the presheaf
$\der^q_\ms (F)$. The term $E_{\geq t,1}$ is given by
\[ E^{p,q}_{\geq t, 1} = H^{p,q}_d (K_{\geq t}) = 
 C^p (\fU; \der^q_{\geq t} (F)). \]
Since $d_{\geq t, 1} = \delta,$ we have
\[ E^{p,q}_{\geq t, 2} = H^p_\delta (\fU; \der^q_{\geq t} (F)). \]
Set $t=q$. Since $e$ is a morphism of spectral sequences, we have
a commutative square
\[ \xymatrix{
E^{p,q}_{\geq t, 2} \ar[r]^{e^{p,q}_2} \ar[d]_{d^{p,q}_{\geq t,2}} &
 E^{p,q}_{\ms, 2} \ar[d]^{d^{p,q}_{\ms, 2}} \\
E^{p+2, q-1}_{\geq t, 2} \ar[r]^{e^{p+2, q-1}_2} &
 E^{p+2, q-1}_{\ms, 2}.
} \]
In view of the above identifications of $E_2$-terms, this can be rewritten as
\[ \xymatrix{
H^p (\fU; \der^q_{\geq t} (F)) \ar[r]^{e^{p,q}_2} \ar[d]_{d^{p,q}_{\geq t,2}} &
 H^p (\fU; \der^q_\ms (F)) \ar[d]^{d^{p,q}_{\ms, 2}} \\
 H^{p+2} (\fU; \der^{q-1}_{\geq t} (F)) \ar[r]^{e^{p+2, q-1}_2} &
 H^{p+2} (\fU; \der^{q-1}_\ms (F)) .
} \]
Our choice of $t$ together with Lemma \ref{lem.3.1} implies that
$e^{p,q}_2$ is an isomorphism. Therefore, we can express $d^{p,q}_{\ms, 2}$
as the composition
\begin{equation} \label{equ.twostars}
d^{p,q}_{\ms, 2} = e^{p+2, q-1}_2 \circ d^{p,q}_{\geq t, 2} \circ
  (e^{p,q}_2)^{-1}.
\end{equation}
Since $q-1 < t$, we have $\der^{q-1}_{\geq t} (F)=0$ by Lemma \ref{lem.3.1},
and thus $H^{p+2} (\fU; \der^{q-1}_{\geq t} (F))=0,$
$d^{p,q}_{\geq t, 2} =0$ and $e^{p+2, q-1}_2 =0.$ By
(\ref{equ.twostars}), $d^{p,q}_{\ms, 2}=0.$ \\

We shall next show that for arbitrary $t$, $d_{\geq t,2} =0.$
Given any bidegree $(p,q)$, there are two cases to consider: $q-1<t$ and
$q-1\geq t$. If $q-1<t,$ then we have $E^{p+2, q-1}_{\geq t, 2} =$
$H^{p+2} (\fU; \der^{q-1}_{\geq t} (F))=0,$ so that
$d^{p,q}_{\geq t,2} : E^{p,q}_{\geq t,2} \to E^{p+2, q-1}_{\geq t,2} =0$
is zero. If $q-1 \geq t,$ then $e^{p+2, q-1}_2$ is an isomorphism by Lemma
\ref{lem.3.1} and
\[ d^{p,q}_{\geq t,2} = (e^{p+2, q-1}_2)^{-1} \circ d^{p,q}_{\ms, 2} \circ
  e^{p,q}_2 =0, \]
since $d^{p,q}_{\ms, 2}=0.$ Thus $d_{\geq t, 2}=0$ for any $t$. \\

For $r\geq 2,$ let $P(r)$ denote the package of statements
\begin{itemize}
\item $d_{\ms, r}=0$ and $d_{\geq t, r}=0$ for all $t$, 
\item $E^{p,q}_{\ms, r} = H^p (\fU; \der^q_\ms (F))$ and
  $E^{p,q}_{\geq t, r} = H^p (\fU; \der^q_{\geq t} (F))$ for all $t$, and
\item $e_{r} = e_2$.
\end{itemize}
We have shown that $P(2)$ holds. We shall now show for $r\geq 3$ that if
$P(r-1)$ holds, then $P(r)$ holds. The vanishing of the differentials in the
$E_{r-1}$-terms implies that
\[ E^{p,q}_{\ms,r} = E^{p,q}_{\ms,r-1} = H^p (\fU; \der^q_\ms (F)) \]
and
\[ E^{p,q}_{\geq t,r} = E^{p,q}_{\geq t,r-1} = H^p (\fU; \der^q_{\geq t} (F)). \]
Furthermore, as $e$ is a morphism of spectral sequences, we have
\[ e^{p,q}_r = H^{p,q} (e_{r-1}) = e^{p,q}_{r-1} = e^{p,q}_2. \]
Hence the commutative square
\[ \xymatrix@C=40pt{
E^{p,q}_{\geq t, r} \ar[r]^{e^{p,q}_r} \ar[d]_{d^{p,q}_{\geq t,r}} &
 E^{p,q}_{\ms, r} \ar[d]^{d^{p,q}_{\ms, r}} \\
E^{p+r, q-r+1}_{\geq t, r} \ar[r]^{e^{p+r, q-r+1}_r} &
 E^{p+r, q-r+1}_{\ms, r}
} \]
can be rewritten as
\[ \xymatrix@C=40pt{
H^p (\fU; \der^q_{\geq t} (F)) \ar[r]^{e^{p,q}_2} \ar[d]_{d^{p,q}_{\geq t,r}} &
 H^p (\fU; \der^q_\ms (F)) \ar[d]^{d^{p,q}_{\ms, r}} \\
 H^{p+r} (\fU; \der^{q-r+1}_{\geq t} (F)) \ar[r]^{e^{p+r, q-r+1}_2} &
 H^{p+r} (\fU; \der^{q-r+1}_\ms (F)).
} \]
Again take $t=q$. Then $e^{p,q}_2$ is an isomorphism and the factorization
\[
d^{p,q}_{\ms, r} = e^{p+r, q-r+1}_2 \circ d^{p,q}_{\geq t, r} \circ
  (e^{p,q}_2)^{-1}
\]
shows that $d^{p,q}_{\ms, r}=0$ because
$H^{p+r} (\fU; \der^{q-r+1}_{\geq t} (F))=0$ by $q-r+1<t$ ($r\geq 3$).
For arbitrary $t$, $d_{\geq t, r} =0.$ For if $q-r+1<t,$ then
$E^{p+r, q-r+1}_{\geq t, r} = H^{p+r} (\fU; \der^{q-r+1}_{\geq t} (F))=0$
so that $d^{p,q}_{\geq t, r}=0,$ while for $q-r+1\geq t,$ the map
$e^{p+r, q-r+1}_2$ is an isomorphism and $d^{p,q}_{\geq t, r}=0$ follows
from the factorization
$d^{p,q}_{\geq t, r} = (e^{p+r, q-r+1}_2)^{-1} \circ d^{p,q}_{\ms, r}
\circ e^{p,q}_2$ and the fact that $d_{\ms, r}=0$. This induction shows
that $d_{\ms, r}=0$ for all $r\geq 2$. We conclude that $E(K_\ms)$ collapses
at the $E_2$-term. This will now be used to prove that $E(K)$ collapses
at $E_2$. \\

Since $f$ is a morphism of spectral sequences, we have for every
$(p,q)$ a commutative square
\[ \xymatrix@C=40pt{
E^{p,q}_{\ms, 2} \ar[r]^{f^{p,q}_2} \ar[d]_{0=d^{p,q}_{\ms,2}} &
 E^{p,q}_{2} \ar[d]^{d^{p,q}_{2}} \\
E^{p+2, q-1}_{\ms, 2} \ar[r]^{f^{p+2, q-1}_2} &
 E^{p+2, q-1}_{2},
} \]
which can be rewritten as
\[ \xymatrix@C=40pt{
H^p (\fU; \der^q_{\ms} (F)) \ar[r]^{f^{p,q}_2} \ar[d]_{0} &
 H^p (\fU; \der^q (F)) \ar[d]^{d^{p,q}_{2}} \\
 H^{p+2} (\fU; \der^{q-1}_{\ms} (F)) \ar[r]^{f^{p+2, q-1}_2} &
 H^{p+2} (\fU; \der^{q-1} (F)) .
} \]
By (\ref{equ.nabla}), $f^{p,q}_2$ is an isomorphism for all 
$(p,q)$. Thus $d_2 =0$ on $E_2$. The fact that $f_2$ is an isomorphism
implies that $f_r$ is an isomorphism for all $r\geq 2$. This shows,
since $d_{\ms, r} =0$ for all $r\geq 2,$ that $d^{p,q}_r =0$ for all
$r\geq 2$.  Consequently, $E(K)$ collapses at the $E_2$-term, as
was to be shown.
\end{proof}

\section{Nonisometrically Structured Flat Bundles}
\label{sec.nonisometric}

We construct an example of a flat, smooth circle bundle whose Leray-Serre spectral
sequence with real coefficients does not collapse at the $E_2$-term.
The example shows, then, that in Theorem \ref{thm.main1} one cannot delete
without substitution the requirement that the structure group act isometrically.
The example is based on constructions of J. Milnor, cf. \cite{milnorconncurvzero},
\cite{milnorstasheff}. \\

Let $B$ be a closed Riemann surface of genus $2$. Its universal cover
$\widetilde{B}$ is conformally diffeomorphic to the complex upper half plane
$H = \{ z\in \cplx ~|~ \operatorname{Im} z>0 \}.$
The fundamental group $\pi_1 (B)$ acts on $H$ biholomorphically as the
deck transformations. The group of biholomorphic automorphisms of $H$ is
$\operatorname{PSL}(2,\real)$, acting as M\"obius transformations
\[ z\mapsto \frac{az+b}{cz+d},~ a,b,c,d\in \real,~ ad-bc=1. \]
This yields a faithful representation $\pi_1 B \to \operatorname{PSL}(2,\real).$
The operation of $\operatorname{PSL}(2,\real)$ on $H$ extends naturally to the
closure $\overline{H} = H\cup \real \cup \{ \infty \}$ of $H$ in $\cplx \cup
\{ \infty \}$. In particular, $\operatorname{PSL}(2,\real)$ acts on the boundary
circle $\partial \overline{H} = \real \cup \{ \infty \}$ and we can form the
flat circle bundle $\gamma$ with projection
$\pi: E = \widetilde{B} \times_{\pi_1 B} (\real \cup \{ \infty \}) \to B$ and
structure group $\operatorname{PSL}(2,\real)$. Recall that any orientable,
possibly nonlinear, sphere bundle $\xi$ with structure group 
$\operatorname{Diff}(S^{n-1})$ has a real Euler class
$e(\xi) \in H^n (M;\real),$ where $M$ is the base manifold. Since our bundle
$\gamma$ can be identified (though not linearly) with the tangent circle
bundle $S(TB)$ of $B$ (see \cite{milnorstasheff}), the Euler number
$\langle e(\gamma), [B] \rangle$ of $\gamma$ is the Euler characteristic
$\chi (B) =-2$. As the Euler class is transgressive and not zero, the differential
$d_2: E^{0,1}_2 \to E^{2,0}_2$ is nontrivial and the spectral sequence of
$\gamma$ does not collapse at $E_2$. In more detail, let
$\sigma \in E^{0,1}_1 = C^0 (\mathfrak{U}; \der^1 (S^1))$ be the element
corresponding to the usual angular forms on $E$
(\cite[Remark 14.20]{botttu}). Since $\gamma$ is orientable,
$d_1 (\sigma) = \delta (\sigma)=0$ and $\sigma$ determines a class
$[\sigma ]\in E^{0,1}_2 = H^0 (B;\der^1 (S^1)).$
We have $e(\gamma)=d_2 [\sigma]$ for the transgression
\[ d_2: E^{0,1}_2 = H^0 (B;\der^1 (S^1))\longrightarrow
  E^{2,0}_2 = H^2 (B;\der^0 (S^1))= H^2 (B;\real). \]
It follows from Theorem \ref{thm.main1} that there is no Riemannian metric on
$S^1 = \real \cup \{ \infty \}$ such that the action of $\pi_1 (B)$ on
$\real \cup \{ \infty \}$ is isometric. This statement will now be
affirmed directly, without appealing to the theorem.

\begin{prop} \label{prop.nonisometric}
There is no Riemannian metric on $\real \cup \{ \infty \}$ such that the
Fuchsian group given by the image of the representation
$\pi_1 B \to \operatorname{PSL}(2,\real)$ acts isometrically on
$\real \cup \{ \infty \}$.
\end{prop}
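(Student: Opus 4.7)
The plan is to argue by contradiction. Suppose $g$ is a smooth Riemannian metric on $S^1 = \real \cup \{\infty\}$ for which every element of the image $\Gamma$ of $\pi_1 B \to \operatorname{PSL}(2,\real)$ acts as an isometry of $(S^1,g)$. Any compact, connected, $1$-dimensional Riemannian manifold is isometric to a standard circle of some circumference $L$, so the full isometry group of $(S^1,g)$ is (after a smooth reparametrization) the group $O(2)$. This group splits into rotations in $SO(2)$, all of which are fixed-point free except the identity, and the reflections in $O(2)\setminus SO(2)$, each of which has order exactly $2$ and exactly two fixed points.

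Next I would invoke the geometry of the representation $\pi_1 B \to \operatorname{PSL}(2,\real)$. Since $B$ is a closed hyperbolic surface of genus $2$, the representation is faithful and $\Gamma$ is a torsion-free, cocompact Fuchsian group; in particular every nontrivial element $\gamma \in \Gamma$ is hyperbolic. Viewed as a M\"obius transformation on $\overline{H} = H \cup \real \cup \{\infty\}$, such a $\gamma$ has infinite order and possesses exactly two fixed points on $\partial\overline{H} = S^1$ (its attracting and repelling fixed points).

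The contradiction is then immediate. Pick any nontrivial $\gamma \in \Gamma$; by assumption it is simultaneously an isometry of $(S^1,g)$ and a hyperbolic M\"obius transformation. As a nontrivial element of $O(2)$ it is either a nontrivial rotation, hence fixed-point free, or a reflection, hence of order $2$. Neither possibility is compatible with $\gamma$ being an infinite-order diffeomorphism of $S^1$ with two fixed points, so no such metric $g$ can exist. I expect the only subtle point to be ensuring that the identification of $(S^1,g)$ with a standard circle $\real / L\intg$ genuinely transports the $\Gamma$-action to an action by elements of $O(2)$ with their standard fixed-point behavior; but since having a fixed point and having finite order are both diffeomorphism invariants, this causes no real trouble, and the argument stands without appeal to Theorem \ref{thm.main1}.
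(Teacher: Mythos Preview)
Your argument is correct and takes a genuinely different, more global route than the paper's. The paper argues locally: it picks a hyperbolic element $X\in\Gamma$, takes a finite real fixed point $x$, and observes that an isometry must have derivative $\pm 1$ at a fixed point; it then checks by direct computation with the formula $X(t)=(at+b)/(ct+d)$ that $dX/dt = (ct+d)^{-2}$ is never $\pm 1$ at the fixed points of a hyperbolic M\"obius transformation. Your approach instead invokes the global fact that $(S^1,g)$ is isometric to a round circle, so its isometry group is $O(2)$, and then uses the rotation/reflection dichotomy to rule out any infinite-order element with fixed points. Both reach the same contradiction (hyperbolic dynamics on $\partial\overline{H}$ is incompatible with circle isometries), but your version is cleaner and avoids the explicit coordinate calculation, while the paper's version is entirely self-contained and does not appeal to the classification of $1$-dimensional Riemannian manifolds or the structure of $O(2)$.
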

\begin{proof}
By contradiction; suppose there were such a metric.
The image $\Gamma \subset \operatorname{PSL}(2,\real)$ of the
faithful representation $\pi_1 B \to \operatorname{PSL}(2,\real)$
is a cocompact surface Fuchsian group and hence all nontrivial elements
of $\Gamma$ are hyperbolic, that is, $|\operatorname{tr} X|>2$ for
$X\in \Gamma - \{ 1 \}$. Let $X$ be such an element.
Any hyperbolic element of $\operatorname{PSL}(2,\real)$ has precisely
two fixed points in $\overline{H}$, both of which lie in $\real \cup 
\{ \infty \}.$ In particular, $X$ has a finite, real fixed point $x$. Let
$v\in T_x (\real \cup \{ \infty \})$ be any nonzero tangent vector at this
fixed point. If
\[ X_\ast: T_x (\real \cup \{ \infty \}) \longrightarrow T_{X(x)} (\real \cup \{ \infty \}) =
   T_x (\real \cup \{ \infty \}) \]
denotes the differential of $X: \real \cup \{ \infty \} \to \real \cup \{ \infty \}$
at $x$, then $X_\ast (v)=\lambda v$ for some nonzero scalar $\lambda \in \real$
and
\[ |\lambda| \cdot \| v \|_x = \| \lambda v \|_x = \| X_\ast (v) \|_x = \| v \|_x, \]
where $\| \cdot \|_x$ is the putative $\Gamma$-invariant norm, evaluated at $x$.
Consequently, $X_\ast (v) = \pm v$.
Let $t$ be the standard coordinate in $\real \subset \real \cup \{ \infty \}$ and write
\[ X(t) = \frac{at+b}{ct+d},~ a,b,c,d\in \real,~ ad-bc=1,~ |a+d|>2. \]
Taking $v$ to be the standard basis vector $v=\partial_t$, we have
$X_\ast (\partial_t) = \frac{dX}{dt}(x)\cdot \partial_t$ and thus
\[ \left| \frac{dX}{dt}(x) \right|=1.  \]
In fact, however, the derivative of a hyperbolic M\"obius transformation in 
$\operatorname{PSL}(2,\real)$ at a finite, real fixed point is never $\pm 1$.
Indeed, if $c\not= 0,$ then
\[ x = \frac{1}{2c} (a-d \pm \sqrt{\Delta}), \]
where $\Delta$ is the discriminant $\Delta = \operatorname{tr}^2 X -4>0$, and
if $c=0,$ then $x = -b/(a-d).$ (Note that $c=0$ and $X$ hyperbolic implies that
$a\not= d$.) The derivative of $X$ is given by $dX/dt = (ct+d)^{-2}$.
Brief calculations verify that $(cx+d)^2$ cannot be $1$ at the above points $x$,
for hyperbolic $X$.
\end{proof}
This example illustrates that when modifying the structure group of a fiber bundle,
there is a tension between flatness and compactness of the structure group:
For a flat bundle with noncompact structure group, one can often reduce to a compact
group, but in doing so may be forced to give up flatness. Conversely, given a
compactly structured bundle which is not flat, one may sometimes gain
flatness at the expense of enlarging the structure group to a noncompact group.
For example, identifying the flat $\operatorname{PSL}(2,\real)$-bundle $\gamma$
with $S(TB)$, one may give $\gamma$ the structure group $\operatorname{SO}(2)$,
but one loses flatness ($S(TB)$ is not a flat $\operatorname{SO}(2)$-bundle).

\section{Equivariant Cohomology}
\label{sec.equivcohom}

We turn our attention to isometric actions of discrete, torsionfree groups. The actions
considered here
are usually not proper, and our $G$-spaces are generally not $G$-CW complexes.
Concerning condition (1) of Theorem \ref{thm.equivcohom} below, we remark again
that a nonproper action factors in many geometric situations through a proper action;
for instance, a discrete, torsionfree group may act nonproperly on a closed manifold,
but in such a way that the manifold can be endowed with an invariant Riemannian metric
--- in that case, the action factors through the (proper) action of the compact isometry group.
\begin{thm} \label{thm.equivcohom}
Let $F$ be an oriented, closed, smooth manifold and $G$ a discrete group,
whose Eilenberg-MacLane space $K(G,1)$ may be taken to be a closed,
smooth manifold. If for a smooth action of $G$ on $F$, \\

(1) the action factors through the proper, smooth action of a Lie group, \\

\noindent or \\

(2) $F$ is Riemannian and $G$ acts isometrically on $F$, \\

\noindent then the real $G$-equivariant cohomology of $F$ decomposes as
\[ H^k_G (F;\real) \cong \bigoplus_{p+q=k} H^p (G; \der^q (F;\real)), \]
where the $\der^q (F;\real)$ are $G$-modules determined by the action.
\end{thm}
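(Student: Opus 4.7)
The plan is to realize the equivariant cohomology $H^\bullet_G(F;\real)$ as the ordinary cohomology of a flat fiber bundle over the aspherical manifold $B = K(G,1)$, and then quote Theorem \ref{thm.main1} together with the interpretation of $H^\bullet(B;-)$ as group cohomology. Concretely, pick a closed smooth manifold model $B$ for $K(G,1)$ and let $\widetilde{B}\to B$ be its universal cover, which carries a free, smooth, properly discontinuous left action of $G=\pi_1(B)$. Form the associated bundle
\[ E = \widetilde{B} \times_G F \stackrel{\pi}{\longrightarrow} B, \]
which is a smooth fiber bundle with fiber $F$. Because the $G$-action on $F$ is used to glue local trivializations and $G$ is discrete, the transition functions of $\pi$ (with respect to a good cover of $B$ trivialized by lifts to $\widetilde{B}$) are locally constant, so $\pi$ is flat in the sense of this paper. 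The monodromy representations $\pi_1(B,b)\to \operatorname{Diff}(F)$ are precisely the $G$-action on $F$ (up to basepoint conjugation).

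Next I would verify that the hypotheses of Theorem \ref{thm.main1} are met for $\pi$. In case (2), the structure group may be taken as $H = \operatorname{Isom}(F,g)$; this is a compact Lie group since $F$ is closed and Riemannian, and the monodromy images factor through $H$ by assumption, so the isometric structure-group hypothesis holds. In case (1), Palais's slice theorem (invoked in the introduction) produces a $G$-invariant Riemannian metric on $F$, reducing (1) to (2). Theorem \ref{thm.main1} then gives the degeneration of the Leray--Serre spectral sequence at $E_2$ with real coefficients, yielding
\[ H^k(E;\real) \;\cong\; \bigoplus_{p+q=k} H^p\bigl(B;\der^q(F;\real)\bigr), \]
where $\der^q(F;\real)$ is the local system on $B$ induced by the $G$-action on $H^q(F;\real)$.

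Finally I would identify both sides with the statement of the theorem. Since $G$ acts freely on the contractible space $\widetilde{B}$, we have a homotopy equivalence $\widetilde{B}\times_G F \simeq EG \times_G F$, and consequently
\[ H^k_G(F;\real) \;=\; H^k(EG \times_G F;\real) \;\cong\; H^k(E;\real). \]
Moreover, since $B$ is a $K(G,1)$, cohomology of $B$ with coefficients in any local system $\mathcal{L}$ coincides with the group cohomology of $G$ with coefficients in the corresponding $G$-module, via the standard identification $\pi_1(B)\cong G$. Applying this to $\mathcal{L}=\der^q(F;\real)$, whose stalk is $H^q(F;\real)$ and whose monodromy is the $G$-action induced on cohomology, converts the right-hand side into $\bigoplus_{p+q=k} H^p(G; \der^q(F;\real))$, completing the argument.

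The only substantive point, beyond invoking Theorem \ref{thm.main1}, is the packaging step: checking that the Borel construction for a (possibly nonproper) $G$-action on $F$ is captured, up to weak equivalence, by the flat bundle $\widetilde{B}\times_G F\to B$ with the required structure group. The nonproperness or nonclosedness of the image of $G$ in $\operatorname{Isom}(F)$ is harmless here because Theorem \ref{thm.main1} only requires the existence of some compact Lie group $H$ acting isometrically on $F$ into which the monodromy lands, not that $G$ itself be closed in $H$; taking $H=\operatorname{Isom}(F,g)$ suffices.
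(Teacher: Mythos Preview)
Your proposal is correct and follows essentially the same route as the paper: model $EG\to BG$ by the universal cover $\widetilde{B}\to B$ of a closed smooth $K(G,1)$, form the flat bundle $\widetilde{B}\times_G F\to B$, reduce hypothesis (1) to (2) via Palais, apply Theorem~\ref{thm.main1}, and then identify $H^\bullet(B;-)$ with group cohomology $H^\bullet(G;-)$. The paper's proof is organized identically, with the only cosmetic difference that it treats $\widetilde{B}$ directly as $EG$ rather than passing through a homotopy equivalence $\widetilde{B}\times_G F\simeq EG\times_G F$.
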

\begin{proof}
If (1) is satisfied, that is, the action of $G$ on $F$ factors as
$G\to H\to \operatorname{Diffeo}(F)$ with $H$ a Lie group acting properly,
then $F$ can be equipped with an $H$-invariant Riemannian metric, by \cite{palaisexistslices}.
Then $H$, and thus also $G$, acts isometrically on $F$ so that it suffices
to prove the theorem assuming hypothesis (2). 
Let $B\simeq K(G,1)$ be an aspherical, closed, smooth manifold with
fundamental group $G$. The universal cover $\widetilde{B} \to B$
can serve as a model for the universal principal $G$-bundle
$EG\to BG$ because it is a principal $G$-bundle with contractible
total space $\widetilde{B}$, as follows from the fact that $\widetilde{B}$
is a simply connected CW-complex and $\pi_i (\widetilde{B})\cong
\pi_i (B)=0$ for $i\geq 2$. We recall that the Borel construction 
$F \times_G EG$ is the orbit space $(F\times EG)/G,$ where $G$
acts diagonally on the product $F\times EG$. The $G$-equivariant cohomology
of $F$ is the cohomology of the Borel construction,
\[ H^\bullet_G (F) = H^\bullet (F\times_G EG). \]
Using the model $\widetilde{B}\to B,$ this may be computed as
\[ H^\bullet_G (F) = H^\bullet (F\times_G \widetilde{B}). \]
The space $F\times_G \widetilde{B}$ is the total space of a flat fiber bundle
\[ \xymatrix@C=15pt@R=15pt{
F \ar@{^{(}->}[r] & F\times_G \widetilde{B} \ar[d] \\
& B,
} \]
whose projection is induced by the second-factor projection
$F\times \widetilde{B} \to \widetilde{B}$.
Since $G$ acts isometrically on $F$, Theorem \ref{thm.main1}
applies and we conclude that the Leray-Cartan-Lyndon spectral sequence
for real coefficients of the $G$-space $F$ collapses at the $E_2$-term.
In particular,
\begin{eqnarray*}
H^k_G (F;\real) & \cong & H^k (F\times_G \widetilde{B};\real) \cong
 \bigoplus_{p+q=k} H^p (B;\der^q (F;\real)) \\
& \cong & \bigoplus_{p+q=k} H^p (G;\der^q (F;\real)).
\end{eqnarray*}
\end{proof}
\begin{remark}
If $F \stackrel{i}{\longrightarrow} E \longrightarrow B$ is any fibration with
$B$ path-connected such that the restriction $i^\ast: H^\bullet (E;\real) \to H^\bullet (F;\real)$
is surjective (i.e. the fiber is ``totally nonhomologous to zero in $E$''), then the
action of $\pi_1 B$ on $H^\bullet (F;\real)$ is trivial
(see \cite[p. 51, Ex. 2]{hatcherss}) and the spectral sequence of the fibration
collapses at $E_2$, yielding the Leray-Hirsch theorem. Our results apply to situations
where $i^\ast$ needs not be surjective. In Section \ref{sec.flag}, we consider
an example of a certain $\intg^3$-action on the flag manifold $F_8 = U(8)/T^8$.
We compute (in two different ways) that $H^2_{\intg^3} (F_8)$ has rank $5$,
while $H^2 (F_8)$ has rank $7$. Thus $i^\ast$ is not surjective in this example.
Moreover, in the context of the present paper, the action of $\pi_1 B$ on
$H^\bullet (F;\real)$ is typically nontrivial.
\end{remark}

Let us discuss some immediate consequences of Theorem \ref{thm.equivcohom}. For a $G$-module
$H$, let $H^G = \{ v\in H ~|~ gv=v \text{ for all } g\in G \}$ denote the subspace
of invariant elements.
\begin{cor} \label{cor.1}
Let $F$ be an oriented, closed, connected, smooth manifold and $G$ a discrete
group as in Theorem \ref{thm.equivcohom}, acting smoothly on $F$ so that
hypothesis (1) or (2) is satisfied. Then there is a monomorphism
\[ H^k (G;\real)\oplus H^k (F;\real)^G \hookrightarrow H^k_G (F;\real) \]
for $k\geq 1$.
\end{cor}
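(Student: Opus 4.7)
The plan is to read off the two desired summands directly from the direct sum decomposition provided by Theorem \ref{thm.equivcohom},
\[ H^k_G (F;\real) \cong \bigoplus_{p+q=k} H^p (G; \der^q (F;\real)), \]
and observe that for $k\geq 1$ the bidegrees $(p,q)=(k,0)$ and $(p,q)=(0,k)$ are distinct, hence the corresponding summands sit inside $H^k_G(F;\real)$ as an internal direct sum.

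First I would identify the summand at $(p,q)=(k,0)$. The $G$-module $\der^0(F;\real)$ is the degree-zero de Rham cohomology of $F$, and since $F$ is connected this is canonically $\real$; moreover the $G$-action on $H^0(F;\real)$ is induced by permutation of connected components and is therefore trivial. Consequently
\[ H^k\bigl(G;\der^0(F;\real)\bigr)=H^k(G;\real). \]
Next I would identify the summand at $(p,q)=(0,k)$. For any $G$-module $M$ one has $H^0(G;M)=M^G$, so
\[ H^0\bigl(G;\der^k(F;\real)\bigr)=\der^k(F;\real)^G = H^k(F;\real)^G. \]

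Since $k\geq 1$, the pairs $(k,0)$ and $(0,k)$ are different indices in the direct sum, so the inclusion of the two factors gives a canonical monomorphism
\[ H^k(G;\real)\oplus H^k(F;\real)^G \;\hookrightarrow\; \bigoplus_{p+q=k} H^p (G; \der^q (F;\real))\;\cong\; H^k_G(F;\real), \]
which is precisely the claim. The only substantive input beyond Theorem \ref{thm.equivcohom} is the triviality of the $G$-action on $\der^0(F;\real)$, which uses connectedness of $F$, and the standard identification $H^0(G;-)=(-)^G$. The restriction $k\geq 1$ is essential: for $k=0$ the two putative summands coincide (both equal $\real$), reflecting that $H^0_G(F;\real)=\real$ and no such injection can exist.
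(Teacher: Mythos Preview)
Your proof is correct and follows essentially the same approach as the paper: both isolate the $(k,0)$ and $(0,k)$ summands in the decomposition of Theorem~\ref{thm.equivcohom}, identify them with $H^k(G;\real)$ (using connectedness of $F$) and $H^k(F;\real)^G$ (using $H^0(G;-)=(-)^G$), and note that these are distinct summands for $k\geq 1$. Your explicit remark on why the restriction $k\geq 1$ is necessary is a nice addition.
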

\begin{proof}
For $k\geq 1$, the direct sum of Theorem \ref{thm.equivcohom}
contains the term $H^k (G;\der^0 (F;\real))$ and the term 
$H^0 (G;\der^k (F;\real))$. The former is isomorphic to $H^k (G;\real)$,
since $G$ acts trivially on $H^0 (F;\real)$ and $H^0 (F;\real)\cong \real$
as $F$ is connected. The latter is isomorphic to
$H^k (F;\real)^G;$ see the appendix.
\end{proof}
In particular, we obtain lower bounds for the ranks of the equivariant groups
in terms of the ranks of the group cohomology.
\begin{cor} \label{cor.2}
In the situation of Corollary \ref{cor.1}, the inequalities 
\[ \rk H^k (G) \leq \rk H^k_G (F) \]
hold for $k\geq 0$.
\end{cor}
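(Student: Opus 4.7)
The plan is to read this off almost immediately from Corollary \ref{cor.1}, handling the case $k=0$ separately since the monomorphism there was only asserted for $k\geq 1$.

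For $k\geq 1$, I would invoke the monomorphism
\[ H^k (G;\real) \oplus H^k (F;\real)^G \hookrightarrow H^k_G (F;\real) \]
from Corollary \ref{cor.1}. Passing to $\real$-dimensions, the existence of this injection gives
\[ \dim_\real H^k (G;\real) + \dim_\real H^k (F;\real)^G \leq \dim_\real H^k_G (F;\real), \]
and since the middle term is nonnegative, we may drop it to obtain the desired rank inequality $\rk H^k (G) \leq \rk H^k_G (F)$.

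For $k=0$ I would argue directly. Since $F$ is connected, $H^0 (F;\real) \cong \real$ with trivial $G$-action, so by Theorem \ref{thm.equivcohom} applied in degree $0$ (or simply because the Borel fibration has connected fiber and base),
\[ H^0_G (F;\real) \cong H^0 (G; \der^0 (F;\real)) \cong H^0 (G;\real) \cong \real. \]
Thus both sides have rank one.

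No step here looks like a genuine obstacle; the only mild wrinkle is remembering to cover $k=0$ separately, since Corollary \ref{cor.1} is stated only for $k\geq 1$. If one wished to present the two cases uniformly, one could instead remark that the summand $H^k (G;\der^0 (F;\real)) \cong H^k (G;\real)$ already appears as a direct summand of $H^k_G (F;\real)$ in the decomposition of Theorem \ref{thm.equivcohom} for all $k\geq 0$, which gives the inequality directly without needing to split off the case $k=0$.
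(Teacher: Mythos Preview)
Your proof is correct and follows the paper's approach: the paper simply records Corollary~\ref{cor.2} as an immediate consequence of Corollary~\ref{cor.1} without further argument. Your write-up is actually more careful than the paper in explicitly handling the case $k=0$, which Corollary~\ref{cor.1} omits, and your closing remark (reading off the summand $H^k(G;\der^0(F;\real))\cong H^k(G;\real)$ directly from Theorem~\ref{thm.equivcohom} for all $k\geq 0$) is the cleanest uniform argument.
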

If a $G$-space $F$ has a fixed point, then $\pi: F\times_G EG \to BG$ has a
section given by $[e] \mapsto [(f,e)],$ where $f$ is a fixed point.
Consequently, $\pi^\ast: H^\bullet (G)\to H^\bullet_G (F)$ is a 
monomorphism. Note that our results concern group actions that may be
fixed-point-free, even free. (See Example \ref{exple.zoncircle} below.)
Let $\cd_\real G$ denote the $\real$-cohomological dimension of a group $G$,
that is, the smallest $n\in \mathbb{N}\cup \{ \infty \}$ such that
$H^k (G;\real)$ vanishes for all $k>n$. For a topological space $X$,
let $\cd_\real X$ be the smallest $n\in \mathbb{N}\cup \{ \infty \}$ such that
the singular cohomology $H^k (X;\real)$ vanishes for all $k>n$.
\begin{cor} \label{cor.3}
In the situation of Corollary \ref{cor.1}, the inequality
\[ \cd_\real G \leq \cd_\real (F\times_G EG) \] holds.
\end{cor}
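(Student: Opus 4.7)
The plan is to deduce this immediately from Corollary \ref{cor.2}, using the fact that all cohomology groups in sight are real vector spaces, so having rank zero is the same as being zero.

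Concretely, I would argue as follows. Let $N = \cd_\real (F \times_G EG)$, so that by definition $H^k (F\times_G EG; \real) = 0$ for every $k > N$. Since $H^\bullet_G (F;\real)$ is by definition the cohomology of the Borel construction $F \times_G EG$, this gives $H^k_G (F;\real) = 0$, hence $\rk H^k_G (F;\real) = 0$, for every $k > N$.

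Now Corollary \ref{cor.2}, which is applicable because we are in the setting of Corollary \ref{cor.1}, yields the inequality $\rk H^k (G;\real) \leq \rk H^k_G (F;\real)$ for every $k \geq 0$. Combining this with the previous step, $\rk H^k (G;\real) = 0$ for every $k > N$, and since $H^k (G;\real)$ is a real vector space this forces $H^k (G;\real) = 0$ for every $k > N$. By definition of $\cd_\real$, we conclude $\cd_\real G \leq N = \cd_\real (F\times_G EG)$, as required.

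There is no real obstacle: the only subtlety is to observe that for vector spaces over $\real$, vanishing rank is equivalent to vanishing of the space itself, which makes the leap from the numerical inequality in Corollary \ref{cor.2} to the cohomological dimension inequality essentially automatic.
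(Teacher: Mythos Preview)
Your proof is correct and follows essentially the same approach as the paper: both deduce the inequality directly from Corollary~\ref{cor.2}, using that a real vector space has rank zero if and only if it vanishes. The only cosmetic difference is that the paper argues from $\cd_\real G$ and treats the finite and infinite cases separately, whereas you argue (contrapositively) from $N=\cd_\real(F\times_G EG)$; the case $N=\infty$ is trivial in your formulation, which you might state explicitly for clarity.
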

\begin{proof}
Suppose that $n=\cd_\real G$ is finite. Then $H^n (G;\real)$ is not zero.
By Corollary \ref{cor.2}, $H^n_G (F;\real)$ is not zero and it follows that
$\cd_\real (F\times_G EG)\geq n$. If $\cd_\real G=\infty,$ then for every
$n\in \mathbb{N}$, there exists an $N\geq n$ such that
$H^N (F\times_G EG;\real)\not= 0,$ whence $\cd_\real (F\times_G EG)=\infty$.
\end{proof}
Although the underlying spaces of the $G$-actions considered in this paper
are smooth manifolds, and hence can be given a (regular)
CW structure, no such structure can usually be found that would make the
$G$-space into a $G$-complex. A $G$-complex is a CW complex together
with a $G$-action which permutes the cells. If $X$ is a $G$-complex on
which $G$ acts freely, then $H^\bullet_G (X)\cong H^\bullet (X/G)$.
Thus, if $F$ were a free $G$-complex satisfying the hypotheses of
Corollary \ref{cor.1}, then, by Corollary \ref{cor.3},
\begin{equation} \label{equ.cdglcdfg}
\cd_\real G \leq \cd_\real (F/G).
\end{equation}
The following example of a free $G$-action shows that this inequality is
generally false in the context of Theorem \ref{thm.equivcohom}.
\begin{example} \label{exple.zoncircle}
Let $G=\intg$ act freely on $F=S^1$ by powers of a rotation by an angle which
is an irrational multiple of $2\pi$. The quotient topology on the orbit space
$S^1/\intg$ is the coarse topology, that is, the only open sets in $S^1 /\intg$
are the empty set and $S^1 /\intg$. The coarse topology on a set $X$ has
the property that any map $Y\to X$ is continuous. In particular, the map
$H: X\times I \to X$ given by $H(x,t)=x$ for $t\in [0,1)$ and 
$H(x,1)=x_0$ for all $x\in X$, where $x_0\in X$ is a base-point, is
continuous. Thus $X$ is homotopy equivalent to a point and therefore
acyclic. This shows that $\cd_\real (S^1 /\intg)=0.$ Since
$\cd_\real \intg = \cd_\real S^1 = 1,$ inequality (\ref{equ.cdglcdfg}) is
violated.
\end{example}
This also emphasizes that it is prudent to observe carefully the hypotheses
of the Vietoris-Begle mapping theorem in attempting to apply it to the
map $F\times_G EG \to F/G$ for a free action. 

Example \ref{exple.zoncircle} illustrates once again that the actions considered in the
present paper are generally not proper, since their orbits need not be closed.
Furthermore, the isotropy groups for a proper $G$-CW complex are compact
(so finite if $G$ is discrete). The isotropy groups arising for our actions can
be infinite. (Consider the trivial $\intg$-action, or a $\intg$-action that factors
through a finite cyclic group.) For a proper $G$-CW complex $X$, a result
of W. L\"uck \cite[Lemma 6.4]{lueckratcompkdiscgrps}, based on
\cite[Lemma 8.1]{lueckreichvarisco}, asserts that the projection
$X\times_G EG \to X/G$ induces an isomorphism
\[ H_n (X\times_G EG;\rat) \stackrel{\cong}{\longrightarrow}
  H_n (X/G;\rat). \]
For the actions arising in our Theorem \ref{thm.equivcohom}, the groups
$H_n (X\times_G EG;\rat)$ and $H_n (X/G;\rat)$ are generally not isomorphic,
as Example \ref{exple.zoncircle} shows.

Let us consider some specific groups.
\begin{cor} \label{cor.4}
If $\intg^n$ acts isometrically on an oriented, closed, connected, Riemannian manifold
$F$, then
\[ \rk H^k_{\intg^n} (F)\geq \binom{n}{k}, \]
with equality for $k=0$.
\end{cor}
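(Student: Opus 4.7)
The plan is to reduce to Corollary \ref{cor.2} by exhibiting a closed, smooth aspherical model of $K(\intg^n,1)$ and then computing $H^\bullet(\intg^n;\real)$ explicitly. First I would observe that the $n$-torus $T^n = (S^1)^n$ is a closed, smooth, oriented manifold whose fundamental group is $\intg^n$ and whose universal cover $\real^n$ is contractible, so $T^n$ is a model for $K(\intg^n,1)$. In particular, the hypothesis of Theorem \ref{thm.equivcohom} on $G = \intg^n$ is satisfied, and hence Corollaries \ref{cor.1} and \ref{cor.2} apply to any isometric action of $\intg^n$ on the Riemannian manifold $F$.

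Next I would compute $H^k(\intg^n;\real)$. By the identification $H^\bullet(\intg^n;\real) \cong H^\bullet(T^n;\real)$ together with the K\"unneth theorem (applied iteratively to $T^n = S^1 \times \cdots \times S^1$) one obtains $H^k(\intg^n;\real) \cong \real^{\binom{n}{k}}$, so $\rk H^k(\intg^n) = \binom{n}{k}$. Plugging this into Corollary \ref{cor.2} immediately yields
\[ \rk H^k_{\intg^n}(F) \geq \rk H^k(\intg^n) = \binom{n}{k}, \]
which is the desired inequality.

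For the equality at $k=0$, I would argue that the Borel construction $F \times_{\intg^n} E\intg^n$ is path-connected: it is the total space of a fiber bundle over $B\intg^n \simeq T^n$ with fiber $F$, and both the base and the fiber are path-connected by assumption. Hence $H^0_{\intg^n}(F;\real) = H^0(F \times_{\intg^n} E\intg^n;\real) \cong \real$, which has rank $1 = \binom{n}{0}$. There is no real obstacle here; the only step requiring care is the verification that the aspherical hypothesis on $\intg^n$ is met by a smooth closed manifold, which the torus supplies at once.
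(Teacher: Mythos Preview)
Your proof is correct and follows essentially the same approach as the paper: take $T^n$ as a closed smooth model for $K(\intg^n,1)$, use $\rk H^k(T^n)=\binom{n}{k}$, and invoke Corollary~\ref{cor.2}. Your treatment of the $k=0$ equality via path-connectedness of the Borel construction is a bit more explicit than the paper's, which leaves that point tacit.
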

\begin{proof}
The inequality follows from Corollary \ref{cor.2} by observing that we may take
$K(\intg^n, 1)=T^n,$ the $n$-torus, and $\rk H^k (T^n) =\binom{n}{k}.$
\end{proof}
\begin{cor} \label{cor.5}
If a discrete, integral Heisenberg group $\mathfrak{H}_n,$ $n$ a positive integer, acts
isometrically on an oriented, closed, connected, Riemannian manifold $F$, then
$\rk H^0_{\mathfrak{H}_n} (F)=1,$
\[ \rk H^k_{\mathfrak{H}_n} (F) \geq  2,
 \text{ for } k= 1,2, \] 
and $H^3_{\mathfrak{H}_n} (F;\real)$ does not vanish.
\end{cor}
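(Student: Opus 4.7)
The plan is to invoke Corollary \ref{cor.2} with $G=\mathfrak{H}_n$. For this one needs a smooth closed model of $K(\mathfrak{H}_n,1)$, and the natural choice is the Heisenberg nilmanifold $N_n=\mathfrak{H}_n\backslash \mathbb{H}_n(\real)$, where $\mathbb{H}_n(\real)$ is the real Heisenberg Lie group of dimension $2n+1$. Since $\mathbb{H}_n(\real)$ is contractible and $\mathfrak{H}_n$ is a cocompact, discrete, torsionfree lattice, $N_n$ is a closed, smooth, aspherical manifold with $\pi_1 N_n = \mathfrak{H}_n$, so Theorem \ref{thm.equivcohom} applies. The equality $\rk H^0_{\mathfrak{H}_n}(F) = 1$ falls out of the $p+q=0$ summand of the decomposition in Theorem \ref{thm.equivcohom}, using connectedness of $F$, and by Corollary \ref{cor.2} the remaining assertions reduce to the cohomological lower bounds $\rk H^k(\mathfrak{H}_n;\real) \geq 2$ for $k=1,2$ and $H^3(\mathfrak{H}_n;\real) \neq 0$.

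The next step is to compute the low-dimensional cohomology of $\mathfrak{H}_n$ via Nomizu's theorem, which identifies $H^\bullet(\mathfrak{H}_n;\real)$ with the Chevalley--Eilenberg cohomology of the Heisenberg Lie algebra $\mathfrak{h}_n$. Fixing a basis $X_1,\dots,X_n,Y_1,\dots,Y_n,Z$ of $\mathfrak{h}_n$ with only nontrivial brackets $[X_i,Y_i]=Z$, the dual complex has generators $x_i,y_i,z$ with $dx_i=dy_i=0$ and $dz=-\sum_{i=1}^n x_i\wedge y_i$. The abelianization identification gives $H^1\cong \real^{2n}$ and hence $\rk H^1 \geq 2$. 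For $H^2$, the image of $d$ in degree two is the one-dimensional span of $dz$, so it suffices to exhibit two independent closed classes outside this span. I would use $[x_1\wedge z]$ and $[y_1\wedge z]$ when $n=1$ (both closed, because repetition of $x_1$ and $y_1$ in $x_1\wedge dz$ and $y_1\wedge dz$ makes them vanish) and $[x_1\wedge x_2]$ together with $[x_1\wedge y_2]$ when $n\geq 2$.

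For $H^3 \neq 0$ I would split by cases. When $n=1$, $N_1$ is a closed oriented $3$-manifold and Poincar\'e duality gives $\rk H^3 = \rk H^0 = 1$. When $n=2$ the mixed class $[x_1\wedge y_2\wedge z]$ is closed (both terms of $x_1 \wedge y_2 \wedge dz$ vanish by index repetition) and non-exact, since the image of $d$ in degree three lies entirely inside the pure subspace spanned by the $d(x_iz)$ and $d(y_iz)$. When $n\geq 3$, the pure class $[y_1\wedge y_2\wedge y_3]$ works: it is closed and contains no $x$-factor, whereas every $d(x_iz) = \sum_{j \neq i} x_i\wedge x_j\wedge y_j$ contains two $x$'s and every $d(y_iz) = -\sum_{j\neq i} x_j\wedge y_i\wedge y_j$ contains one $x$. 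This last monomial bookkeeping in $\Lambda^\bullet\mathfrak{h}_n^\ast$ is the main obstacle; once it is done, feeding the three cohomological lower bounds into Corollary \ref{cor.2} delivers the corollary.
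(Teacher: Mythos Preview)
Your argument is correct, but you have read $\mathfrak{H}_n$ as the integer lattice in the $(2n+1)$-dimensional real Heisenberg group, whereas the paper means something different: inside the proof, $\mathfrak{H}_n$ is defined as the subgroup of the \emph{three}-dimensional continuous Heisenberg group $\mathfrak{H}(\real)\subset GL_3(\real)$ generated by three explicit matrices $X,Y_n,Z$ subject to $[X,Y_n]=Z^n$, $[X,Z]=[Y_n,Z]=1$. Thus for every $n$ the classifying space $B=\mathfrak{H}(\real)/\mathfrak{H}_n$ is a closed, orientable $3$-manifold (an orientable circle bundle over $T^2$). Since the definition sits inside the proof you did not see, the misreading is understandable, and happily the bounds in the corollary hold under either convention.

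With the paper's definition the cohomology computation collapses to almost nothing: the abelianization $\mathfrak{H}_n/[\mathfrak{H}_n,\mathfrak{H}_n]\cong\intg^2\oplus\intg/n$ gives $\rk H^1(\mathfrak{H}_n)=2$, and then Poincar\'e duality on the $3$-manifold $B$ yields $\rk H^2=\rk H^1=2$ and $\rk H^3=\rk H^0=1$. No Nomizu theorem, no Chevalley--Eilenberg complex, no case split on $n$ is needed. Your route through Lie algebra cohomology is perfectly sound for the higher-rank groups you had in mind, and the monomial bookkeeping you carry out is accurate; it simply does more work than the situation requires. If you wanted to streamline your own version, note that for every $n\geq 2$ the class $[x_1\wedge y_2\wedge z]$ already works (the image of $d$ in degree three never contains a $z$-factor), so the separate $n\geq 3$ case is unnecessary; alternatively, Poincar\'e duality on the $(2n+1)$-dimensional nilmanifold gives $H^3\cong H^{2n-2}$ and reduces the question to finding a nonzero class there.
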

\begin{proof}
Let $\mathfrak{H} (\real) \subset GL_3 (\real)$ be the continuous Heisenberg group,
i.e. the subgroup of upper triangular matrices of the form
\[ \begin{pmatrix} 1 & x & z \\ 0 & 1 & y \\ 0 & 0 & 1 \end{pmatrix},~
  x,y,z\in \real. \]
This is a contractible Lie group.
The discrete Heisenberg group $\mathfrak{H}_n$ can be described as the subgroup
of $\mathfrak{H} (\real)$ generated by the matrices
\[ X= \begin{pmatrix} 1 & 1 & 0 \\ 0 & 1 & 0 \\ 0 & 0 & 1 \end{pmatrix},~
   Y_n = \begin{pmatrix} 1 & 0 & 0 \\ 0 & 1 & n \\ 0 & 0 & 1 \end{pmatrix},~
   Z= \begin{pmatrix} 1 & 0 & 1 \\ 0 & 1 & 0 \\ 0 & 0 & 1 \end{pmatrix}. \]
It is a torsionfree, nilpotent group and a central extension of $\intg^2$ by $\intg$
with relations $[X,Y_n]=Z^n,$ $[X,Z]=1,$ $[Y_n, Z]=1$. Being a subgroup
of $\mathfrak{H}(\real),$ $\mathfrak{H}_n$ acts freely (and properly 
discontinuously and cocompactly) on $\mathfrak{H}(\real)$. Thus the quotient map
$\mathfrak{H}(\real) \to B$ is the universal cover of the orbit space
$B= \mathfrak{H}(\real) / \mathfrak{H}_n$ and $B$ is a closed, orientable,
smooth $3$-manifold, in fact, an orientable circle-bundle over the $2$-torus.
Moreover, $\pi_1 (B)=\mathfrak{H}_n$ and
$\pi_k (B) = \pi_k \mathfrak{H}(\real)=0$ for $k\geq 2$. Hence
$B = B\mathfrak{H}_n = K(\mathfrak{H}_n, 1)$ and we have
\begin{eqnarray*}
H_1 (\mathfrak{H}_n) & = & H_1 (B) = \pi_1 B / [\pi_1 B, \pi_1 B ] = 
    \mathfrak{H}_n / [\mathfrak{H}_n, \mathfrak{H}_n ] \\
& = & \langle X,Y_n ,Z ~|~ [X,Y_n]=1,~ [X,Z]=1,~ [Y_n,Z]=1,~ Z^n = 1 \rangle \\
& = & \intg^2 \oplus \intg/_n.
\end{eqnarray*}
(See also \cite[Chapter I, Section 3]{ademmilgram}.)
Thus
\[ \rk H^1 (\mathfrak{H}_n) = \rk H_1 (\mathfrak{H}_n) = 2. \]
By Poincar\'e duality, $\rk H^2 (\mathfrak{H}_n)=\rk H^1 (\mathfrak{H}_n).$ Furthermore,
$\rk H^3 (\mathfrak{H}_n)=1,$ as $B$ is connected, closed, and orientable.
The result follows from Corollary \ref{cor.2}.
\end{proof}

\section{An Example: Group Actions on Flag Manifolds}
\label{sec.flag}

We will compute the equivariant second real cohomology of a certain
isometric $\intg^3$-action on a (complete) flag manifold, using our
Theorem \ref{thm.equivcohom}. The result will then be independently confirmed by
employing the recursive scheme introduced in the appendix.
The flag manifold $F_n$ is the space of all decompositions
$\cplx v_1 \oplus \cdots \oplus \cplx v_n$ of $\cplx^n$ into lines,
where $\{ v_1, \ldots, v_n \}$ is an orthonormal basis of $\cplx^n$.
Let $U=U(n)$ be the unitary group and $T=T^n \subset U$ the maximal
torus given by the diagonal matrices. Writing each $v_i$ as a column
vector and using these as the columns of a matrix, we obtain an element
$u$ of $U$. If $\cplx v'_1 \oplus \cdots \oplus \cplx v'_n =
\cplx v_1 \oplus \cdots \oplus \cplx v_n,$ then $u' =ut$ with $t\in T$.
Thus $\{ v_1, \ldots, v_n \} \mapsto u$ induces a well-defined map
$F_n \to U/T,$ where $U/T$ denotes the space of left cosets $uT$.
This map is a diffeomorphism and exhibits $F_n$ as a homogeneous
space. The symmetric group $S_n$ acts on $F_n$ by permuting the lines
$\cplx v_i$. In terms of unitary matrices, this can be described as
follows. Since every permutation is a product of transpositions, it
suffices to describe the action of a transposition $\tau$. Let
$p_\tau$ be the corresponding permutation matrix. The normalizer
$N(T)$ of $T$ in $U$ consists of all generalized permutation matrices,
that is, matrices whose pattern of zero entries is the same as the
zero-pattern of a permutation matrix, but whose nonzero entries
can be any complex number of modulus one. In particular,
$p_\tau \in N(T)$. Using this, the map
$\overline{\tau}: U/T \to U/T$ induced by \emph{right}-multiplication
with $p_\tau,$ $\overline{\tau} (uT)= up_\tau T,$ is seen to be
well-defined. The diagram
\[ \xymatrix{
F_n \ar[r]^{\tau} \ar[d]_{\cong} &
 F_n \ar[d]^{\cong} \\
U/T \ar[r]^{\overline{\tau}} & U/T
} \]
commutes. Up to homotopy, $\overline{\tau}$ can also be described by
conjugation: Given an element $\nu \in N(T),$ conjugation by $\nu$
yields a well-defined map $c_\nu: U/T \to U/T,$
$c_\nu (uT)= \nu u \nu^{-1} T.$ While right multiplication by an
element generally only induces a map on $U/T$ if the element lies
in $N(T)$, left multiplication by \emph{any} element of $U$ induces
an automorphism of the homogeneous space. However, all these maps
$f_g: U/T \to U/T,$ $f_g (uT) = guT,$ are homotopic to the identity, since
in $U$ we can choose a path $g_t,$ $t\in [0,1],$ from $g$ to the
identity ($U$ is connected) and $uT \mapsto g_t uT$ is a homotopy
from $f_g$ to the identity. This shows that for $\nu = p_\tau,$
$c_\nu$ is homotopic to $\overline{\tau}$, as
$c_\nu (uT)= f_{p_\tau} (up_\tau T).$ (A similar argument cannot be
used to show that right multiplication $uT \mapsto u\nu T$ for fixed
$\nu \in N(T),$ is homotopic to the identity because a path $\nu_t$
from $\nu$ to the identity would have to be chosen within $N(T)$,
but $N(T)$ is disconnected.) Permutation matrices define a splitting
of the Weyl group $N(T)/T \cong S_n$ into $N(T)$. Then the
Weyl group acts on $U/T$ by conjugation, and this is up to homotopy
the action of $S_n$ on $F_n$ that permutes the lines. \\

The effect of this action on the real cohomology $H^\bullet (F_n)$ is
easily determined. Let $x\in H^2 (\cplx P^{n-1})$ be the standard
generator. There are $n$ projection maps $\pi_i: F_n \to \cplx P^{n-1}$
given by taking the $i$-th line of an orthogonal decomposition. With
$x_i = \pi^\ast_i (x)\in H^2 (F_n),$ we have
\[ H^\bullet (F_n) = \real [x_1, \ldots, x_n] / (e_i (x_1, \ldots, x_n)=0,~
  i=1,\ldots, n), \]
where $e_i (x_1, \ldots, x_n)$ is the $i$-th elementary symmetric
function in $x_1, \ldots, x_n$. The action of the Weyl group $S_n$
on $F_n$ induces the action on $H^\bullet (F_n)$ which permutes
the $x_i$.  \\

If a compact Lie group $G$ acts smoothly on a smooth manifold, then
that manifold can be equipped with a smooth $G$-invariant Riemannian
metric. Thus, as $S_n$ is finite, there exists a Riemannian metric on 
$F_n$ such that $S_n$ acts by isometries. We endow $F_n$ with such
a metric. \\

Next, let us determine which elements of $S_n$ preserve the orientation
of $F_n$. Let $\xi_i \in \Omega^2 (F_n)$ be a closed form representing
$x_i,$ $i=1,\ldots, n$. The flag manifold $F_n$ is a complex manifold of
complex dimension $d=n(n-1)/2.$ 
By \cite[Theorem 2.3]{hoffmanhomer},
there is a nonzero constant $C$ such that
for scalars $\lambda_1, \ldots, \lambda_n,$
\begin{equation} \label{equ.signperms}
\int_{F_n} (\lambda_1 \xi_1 + \ldots + \lambda_n \xi_n)^d =
 C \cdot \prod_{i<j} (\lambda_i - \lambda_j). 
\end{equation}
Thus if the $\lambda_i$ are all distinct, then 
$(\lambda_1 \xi_1 + \ldots + \lambda_n \xi_n)^d$ is a generator
of the real top-dimensional cohomology group $H^{2d} (F_n)\cong \real.$
When a permutation acts on the scalars $\lambda_i$ by permuting them,
then the right hand side of (\ref{equ.signperms}), and therefore also the
left hand side, changes sign if and only if the permutation is odd.
Consequently, the subgroup of orientation preserving automorphisms in
$S_n$ is the alternating group $A_n$. \\

To illustrate the use of our results, we turn to a specific example of a
$\intg^3$-action on $F_8 = U(8)/T^8,$ a manifold of real dimension $56$.
Let $R_1, R_2, R_3$ be the following elements of $A_8$:
\[ R_1 = (1,2)(3,4),~ R_2 = (1,3)(2,4),~ R_3 = (5,8,6). \]
Since these form a commuting set, they define an isometric,
orientation preserving $\intg^3$-action on $F_8$. The set
$\{ R_1, R_2, R_3 \}$ generates an abelian subgroup of order $12$ in 
$A_8$. We shall determine the equivariant cohomology group
$H^2_{\intg^3} (F_8)$. The classifying space is $B\intg^3 = T^3.$
The group $H^2 (F_8)$ has rank $7$ and is linearly generated
by $x_1, \ldots, x_8$ subject to the relation
$e_1 = x_1 + \ldots + x_8=0$. An element 
\[ y = \lambda_1 x_1 + \ldots + \lambda_8 x_8 \in
  H^0 (T^3; \der^2 (F_8)) \cong H^2 (F_8)^{\intg^3} \]
must satisfy the system of equations
\[ \begin{array}{lcl}
(\lambda_1 - \lambda_2)x_1 + (\lambda_2 - \lambda_1)x_2 +
(\lambda_3 - \lambda_4)x_3 + (\lambda_4 - \lambda_3)x_4 & = &
 \lambda (x_1 + \ldots + x_8), \\
(\lambda_1 - \lambda_3)x_1 + (\lambda_2 - \lambda_4)x_2 +
(\lambda_3 - \lambda_1)x_3 + (\lambda_4 - \lambda_2)x_4 & = &
 \lambda' (x_1 + \ldots + x_8), \\
(\lambda_5 - \lambda_6)x_5 + (\lambda_6 - \lambda_8)x_6 +
(\lambda_8 - \lambda_5)x_8 & = &
 \lambda'' (x_1 + \ldots + x_8), 
\end{array} \] 
for some $\lambda, \lambda', \lambda'' \in \real,$ if we regard 
$x_1, \ldots, x_8$ as linearly independent generators of an
$8$-dimensional vector space $V$ with $V/\real e_1=H^2 (F_8)$.
Thus $y$ is of the form
\begin{eqnarray*}
y & = & \lambda_1 (x_1 + \ldots + x_4) + \lambda_5 (x_5 + x_6 + x_8)+
 \lambda_7 x_7 \\
& = & (\lambda_1 - \lambda_7)(x_1 + \ldots + x_4) +
 (\lambda_5 - \lambda_7)(x_5 + x_6 + x_8).
\end{eqnarray*}
The twisted group $H^0 (T^3; \der^2 (F_8))$ thus has rank $2$ and basis
$\{ x_1 + x_2 + x_3 + x_4, x_5 + x_6 + x_8 \}.$ Since $H^1 (F_8)=0,$
we have $H^1 (T^3; \der^1 (F_8))=0.$ Furthermore, as $\der^0 (F_8)$
is a constant local system of rank one on $T^3,$ the group
$H^2 (T^3; \der^0 (F_8)) = H^2 (T^3;\real)$ has rank $3$ generated by the dual
basis of the homology basis
$S^1 \times S^1 \times \pt,$ $S^1 \times \pt \times S^1,$ and
$\pt \times S^1 \times S^1.$ By Theorem \ref{thm.equivcohom},
\begin{equation} \label{equ.resultexple}
\begin{array}{rcl}
H^2_{\intg^3} (F_8) & = & H^0 (T^3; \der^2 (F_8)) \oplus
  H^1 (T^3; \der^1 (F_8)) \oplus H^2 (T^3; \der^0 (F_8)) \\
& = & H^2 (F_8)^{\intg^3} \oplus 0 \oplus H^2 (T^3) \\
& \cong & \real^5.
\end{array} 
\end{equation}

We shall confirm this result by applying the recursive scheme of the appendix.
Let $B_1 = \real^1 \times_\intg F_8$ be the Borel construction of the
$\intg$-action on $F_8$ by powers of $R_1,$ 
$B_2 = \real^1 \times_\intg B_1$ the Borel construction of the
$\intg$-action on $B_1$ by powers of $\oR_2,$ and
$B_3 = \real^1 \times_\intg B_2$ the Borel construction of the
$\intg$-action on $B_2$ by powers of $\oR_3$ (notation as in the
appendix). As explained in the appendix, $H^2_{\intg^3} (F_8) =
H^2 (B_3)$ can then be found by carrying out the following steps: \\

\noindent \begin{tabular}{lp{12cm}}
1. & Determine the action of $R^\ast_1, R^\ast_2$ and $R^\ast_3$
 on both $H^0 (F_8)$ (trivial) and $H^2 (F_8).$
 (Note $H^1 (F_8)=0$.) \tabularnewline
2. & Compute $(\Der_\intg/\Princ_\intg)H^0 (F_8)$ and $H^2 (F_8)^\intg$.
  \tabularnewline
3. & Compute the action of $\oR^\ast_2$ and $\oR^\ast_3$ on both
 $(\Der_\intg/\Princ_\intg) H^0 (F_8)$ and $H^2 (F_8)^\intg$. \tabularnewline
4. & Step $3$ determines the action of $\oR^\ast_2$ and $\oR^\ast_3$ on both
 $H^1 (B_1)$ and $H^2 (B_1)$. The action of $\oR^\ast_2,$ $\oR^\ast_3$
 on $H^0 (B_1)$ is trivial. \tabularnewline
5. & Compute $(\Der_\intg/\Princ_\intg) H^0 (B_1),$
  $(\Der_\intg/\Princ_\intg) H^1 (B_1),$ $H^1 (B_1)^\intg$ and 
 $H^2 (B_1)^\intg$. \tabularnewline
6. & Using the results of steps $4$ and $5$, determine the action of
 $\oR^\ast_3$ on the four groups computed in step $5$. \tabularnewline
7. & The result of step $6$ determines the action of $\oR^\ast_3$ on
 $H^1 (B_2)$ and $H^2 (B_2)$. \tabularnewline
8. & Compute $(\Der_\intg/\Princ_\intg) H^1 (B_2)$ and
 $H^2 (B_2)^\intg$. \tabularnewline
9. & The direct sum of the two groups obtained in step $8$ is $H^2 (B_3)$.
\end{tabular} \\

\noindent We shall now carry out these steps. 

\emph{Step 1.} For $y=\lambda_1 x_1 +\ldots +\lambda_8 x_8 \in H^2 (F_8),$
 we have 
\begin{eqnarray*}
R^\ast_1 (y) & = & \lambda_2 x_1 + \lambda_1 x_2 + \lambda_4 x_3 +
 \lambda_3 x_4 + \lambda_5 x_5 + \ldots + \lambda_8 x_8, \\
R^\ast_2 (y) & = & \lambda_3 x_1 + \lambda_4 x_2 + \lambda_1 x_3 +
 \lambda_2 x_4 + \lambda_5 x_5 + \ldots + \lambda_8 x_8, \\
R^\ast_3 (y) & = & \lambda_1 x_1 + \ldots +\lambda_4 x_4 + \lambda_6 x_5 +
 \lambda_8 x_6 + \lambda_7 x_7 + \lambda_5 x_8. 
\end{eqnarray*}

\emph{Step 2.} If a group $G$ acts trivially on a vector space $V$, then
 $\Princ_G (V)=0$. Moreover, if $G=\intg$ acts trivially, then
 $\Der_\intg (V)\cong V,$ where the isomorphism is given by
 $f \mapsto f(1)$. We conclude that
 $(\Der_\intg/\Princ_\intg)H^0 (F_8) \cong H^0 (F_8).$ 
 If $y\in H^2 (F_8)$ and $R^\ast_1 (y)=y,$ then by step 1,
 $\lambda_1 = \lambda_2,$ $\lambda_3 = \lambda_4$. The set
 $\{ x_1 + x_2, x_3 + x_4, x_5, x_6, x_7 \}$ is a basis for
 $H^2 (F_8)^\intg$ so that $H^2 (F_8)^\intg \cong \real^5$.

\emph{Step 3.} The automorphisms $\oR^\ast_2,$ $\oR^\ast_3$
 act trivially on $(\Der_\intg/\Princ_\intg)H^0 (F_8)$. The action of
 $\oR^\ast_2$ on $H^2 (F_8)^\intg$ is given by
 \[ x_1 + x_2 \mapsto x_3 + x_4,~
    x_3 + x_4 \mapsto x_1 + x_2,~ x_5 \mapsto x_5,~
  x_6 \mapsto x_6,~ x_7 \mapsto x_7, \]
while $\oR^\ast_3$ acts on $H^2 (F_8)^\intg$ by
 \[ x_1 + x_2 \mapsto x_1 + x_2,~
    x_3 + x_4 \mapsto x_3 + x_4,~ x_5 \mapsto x_8 =
 -(x_1 + x_2) - (x_3 + x_4) -x_5 -x_6 -x_7, \]
\[ x_6 \mapsto x_5, x_7 \mapsto x_7. \]

\emph{Step 4.} Since by Proposition \ref{prop.hqe},
\[ H^1 (B_1) = H^1 (F_8)^\intg \oplus
 \frac{\Der_\intg}{\Princ_\intg} H^0 (F_8) =
 \frac{\Der_\intg}{\Princ_\intg} H^0 (F_8), \]
 $\oR^\ast_2$ and $\oR^\ast_3$ both act trivially on $H^1 (B_1)$,
 using step $3$. Again by Proposition \ref{prop.hqe},
\[ H^2 (B_1) = H^2 (F_8)^\intg \oplus
 \frac{\Der_\intg}{\Princ_\intg} H^1 (F_8) = H^2 (F_8)^\intg. \]
The action of $\oR^\ast_2, \oR^\ast_3$ on $H^2 (B_1)$ is thus given
by the assignments listed in the previous step. 

\emph{Step 5.} We have $(\Der_\intg/\Princ_\intg)H^0 (B_1) \cong
 H^0 (B_1),$ since $\oR^\ast_2$ acts trivially on $H^0 (B_1).$
Since $\oR^\ast_2$ acts trivially on $H^1 (B_1),$ $(\Der_\intg/\Princ_\intg)H^1 (B_1) \cong
 H^1 (B_1)$ and $H^1 (B_1)^\intg = H^1 (B_1).$ If
\[ y= \lambda_1 (x_1 + x_2) + \lambda_3 (x_3 + x_4) +
 \lambda_5 x_5 + \lambda_6 x_6 + \lambda_7 x_7 \]
is any element of $H^2 (B_1)$ and $\oR^\ast_2 (y)=y,$ then
$\lambda_1 = \lambda_3$. A basis of $H^2 (B_1)^\intg$ is thus
given by $\{ x_1 + x_2 + x_3 + x_4, x_5, x_6, x_7 \}$;
$H^2 (B_1)^\intg \cong \real^4.$

\emph{Step 6.} The map $\oR^\ast_3$ acts trivially on
$(\Der_\intg/\Princ_\intg)H^0 (B_1),$ $(\Der_\intg/\Princ_\intg)H^1 (B_1)$
and $H^1 (B_1)^\intg$. The action of $\oR^\ast_3$ on 
$H^2 (B_1)^\intg$ is given by
\[ \begin{array}{rcl}
x_1 + \ldots + x_4 & \mapsto & x_1 + \ldots + x_4, \\
x_5 & \mapsto & -(x_1 + \ldots +x_4) -x_5 - x_6 - x_7 \\
x_6 & \mapsto & x_5 \\ x_7 & \mapsto & x_7.
\end{array} \]

\emph{Step 7.} As
\[ H^1 (B_2) = H^1 (B_1)^\intg \oplus
 \frac{\Der_\intg}{\Princ_\intg} H^0 (B_1), \]
$\oR^\ast_3$ acts trivially on $H^1 (B_2)$ by step $6$ and
Proposition \ref{prop.hqe}. Let $\{ X \}$ be a basis for
\[ \frac{\Der_\intg}{\Princ_\intg} H^1 (B_1) = H^1 (B_1) =
 \frac{\Der_\intg}{\Princ_\intg} H^0 (F_8) = H^0 (F_8). \]
Since
\[ H^2 (B_2) = H^2 (B_1)^\intg \oplus
 \frac{\Der_\intg}{\Princ_\intg} H^1 (B_1), \]
a basis for $H^2 (B_2)$ is given by
\[ \{ x_1 + \ldots + x_4, x_5, x_6, x_7, X \}, \]
and $\oR^\ast_3$ transforms this basis according to step $6$ and
$X\mapsto X$.

\emph{Step 8.} We have $(\Der_\intg/\Princ_\intg) H^1 (B_2) \cong
 H^1 (B_2),$ for $\oR^\ast_3$ acts trivially on $H^1 (B_2)$. If
\[ y = \lambda_1 (x_1 +\ldots +x_4) + \lambda_5 x_5 +
 \lambda_6 x_6 + \lambda_7 x_7 + \lambda X \]
is any element of $H^2 (B_2)$ such that $\oR^\ast_3 (y)=y,$ then
$\lambda_5 = \lambda_6 =0.$ A basis of $H^2 (B_2)^\intg$ is given
by $\{ x_1 +\ldots +x_4, x_7, X \}$; $H^2 (B_2)^\intg \cong \real^3$.

\emph{Step 9.} Proposition \ref{prop.hqe}, Lemma 
\ref{lem.identifyborelconstr}, and
\[ \frac{\Der_\intg}{\Princ_\intg} H^1 (B_2) \cong H^1 (B_2) \cong
  H^1 (B_1)^\intg \oplus \frac{\Der_\intg}{\Princ_\intg} H^0 (B_1) \hspace{2cm} \]
\[ \hspace{3cm} \cong
 H^1 (B_1) \oplus H^0 (B_1) \cong H^0 (F_8) \oplus \real \cong
 \real^2 \]
show that
\[ H^2_{\intg^3} (F_8) = H^2 (B_3) \cong
 H^2 (B_2)^\intg \oplus \frac{\Der_\intg}{\Princ_\intg} H^1 (B_2) 
 \cong \real^5, \]
in agreement with (\ref{equ.resultexple}).

\section{Appendix: The Case of the Free Abelian Group}
\label{appendix}

We shall describe a recursive scheme for calculating the equivariant cohomology
of an isometric action of $\intg^n$ on a closed, oriented, path-connected, Riemannian manifold
$F$. We begin with some remarks on calculating the action of the homomorphism
induced on low-degree local system cohomology by a fiber preserving map.
We shall think of a local system $\der$ on a base space $B$ as a functor
$\der: \Pi_1 (B) \to \real-\operatorname{MOD}$ from the fundamental groupoid
$\Pi_1 (B)$ to the category $\real-\operatorname{MOD}$ of real vector spaces
and linear maps. Thus $\der$ assigns to every $b\in B$ a real vector space
$\der (b)$ and to every homotopy class $[\omega ]\in \pi_1 (B; b_1, b_2),$
where $\omega$ is a path $\omega: I\to B,$ $\omega (0)=b_1,$
$\omega (1)=b_2,$ an isomorphism
$\der [\omega ]: \der (b_2) \to \der (b_1).$ The functor $\der$ satisfies
$\der([\omega ][\eta ]) = \der [\omega ] \circ \der [\eta].$ Let
$e_0, e_1,\ldots$ be the canonical orthonormal basis for $\real^\infty$.
The standard simplex $\Delta^p$ is the convex hull of $\{ e_0,\ldots,
e_p \}$. Let $C^p (B;\der)$ be the set of all functions $c$, which assign
to each singular simplex $u: \Delta^p \to B$ an element
$c(u) \in \der (u(e_0)).$ This set $C^p (B;\der)$ is an abelian group under
addition of function values. Let $\sigma_u$ be the homotopy class of the
path $t\mapsto u((1-t)e_1 + te_0),$ defining an isomorphism
$\der (\sigma_u): \der (u(e_0)) \stackrel{\cong}{\longrightarrow}
\der (u(e_1)).$ Taking cohomology with respect to the coboundary operator
$\delta: C^p (B;\der) \to C^{p+1} (B;\der)$ given by
\[ (-1)^p \delta c (u) = \der (\sigma_u)^{-1} c(\partial_0 u) +
 \sum_{i=1}^{p+1} (-1)^i c(\partial_i u) \]
yields $H^p (B;\der)$, the cohomology of $B$ with coefficients in $\der$.

The recursive scheme concerning $\intg^n$-actions
only requires being able to compute in the degrees $p=0$ and $p=1$.
Assume that $B$ is path-connected and endowed with a base-point
$b_0$. Let $G = \pi_1 (B,b_0)$ be the fundamental group and let
$\der (b_0)^G$ denote the $G$-invariants of $\der (b_0)$, that is,
\[ \der (b_0)^G = \{ v\in \der (b_0) ~|~ g \cdot v = v
 \text{ for all } g\in G \}. \]
Here we wrote $g\cdot v = \der (g)(v),$
$\der (g): \der (b_0)\to \der (b_0),$ $g\in \pi_1 (B;b_0, b_0) = G.$
Let us recall the well-known isomorphism
\[ \kappa^G: \der (b_0)^G \stackrel{\cong}{\longrightarrow}
 H^0 (B;\der) = Z^0 (B;\der). \]
For every $b\in B,$ choose a path-class $\xi_b \in 
\pi_1 (B; b, b_0)$ starting at $b$ and ending at the base-point $b_0$.
Define a map
\[ \kappa: \der (b_0) \longrightarrow C^0 (B;\der) \]
by
\[ \kappa (v)(b) = \der (\xi_b)(v) \in \der (b), \]
where $\der (\xi_b): \der (b_0) \stackrel{\cong}{\longrightarrow}
 \der (b).$ (Zero-simplices $u$ are points $b$ in $B$.)
If $v\in \der (b_0)^G \subset \der (b_0)$ is a $G$-invariant vector,
then $\delta \kappa (v)=0,$ that is, $\kappa (v)$ is a cocycle.
Hence $\kappa$ restricts to a map
\[ \kappa^G: \der (b_0)^G \longrightarrow Z^0 (B;\der) =
 H^0 (B;\der). \]
If $\kappa^G (v)=0,$ then $\der (\xi_b)(v)=0$ and thus $v=0$
as $\der (\xi_b)$ is an isomorphism. This shows that $\kappa^G$
is injective. If $c\in Z^0 (B;\der)$ is a cocycle, then
$v = c(b_0)$ is a $G$-invariant vector with $\kappa^G (v)=c.$
Thus $\kappa^G$ is surjective as well, hence an isomorphism.
This isomorphism will be used to compute $H^p (B;\der)$ for
$p=0$. 

The group $H^1 (B;\der)$ ($p=1$) will be computed as derivations
modulo principal derivations. A \emph{derivation} is a function
$f: G\to V$, where $V$ is a real $G$-vector space, such that
$f(gh) = f(g) + g\cdot f(h)$ for all $g,h \in G$. The set of all
functions from $G$ to $V$ is a real vector space under
pointwise addition and scalar multiplication. The derivations
form a linear subspace $\Der_G (V)$. A \emph{principal}
derivation is a derivation $f$ of the form $f(g)=g\cdot v - v$
for some $v\in V$ and all $g\in G$. The principal derivations
form a subspace $\Princ_G (V) \subset \Der_G (V).$ We shall 
recall the well-known isomorphism
\[ \theta: \frac{\Der_G \der (b_0)}{\Princ_G \der (b_0)}
 \stackrel{\cong}{\longrightarrow} H^1 (B;\der). \]
Observe that $H^1 (B;\der)$ can be computed by only
considering singular $1$-simplices that close up. Given a
derivation $f: G\to \der (b_0),$ set
\[ \theta [f] (u) = f (\sigma^{-1}_u) \]
on a closed $1$-simplex $u: (\Delta^1, \partial \Delta^1)\to
(B,b_0).$ If $f$ is principal, then $u\mapsto f(\sigma^{-1}_u)$
is a coboundary. Thus $\theta$ is well-defined and it is
readily verified that it is an isomorphism. \\

Let us discuss the naturality of the above constructions.
Suppose $\der$ and $\bG$ are local coefficient systems on $B$.
A homomorphism $\phi: \der \to \bG$ of local systems is a
natural transformation of functors. Thus for every point
$b\in B$ there is a linear map $\phi (b): \der (b)\to \bG (b)$
such that
\[ \xymatrix{
\der (b_2) \ar[r]^{\der (\omega)} \ar[d]_{\phi (b_2)} &
 \der (b_1) \ar[d]^{\phi (b_1)} \\
\bG (b_2) \ar[r]^{\bG (\omega)} & \bG (b_1)
} \]
commutes for every $\omega \in \pi_1 (B;b_1, b_2).$
This implies in particular that
$\phi (b_0): \der (b_0) \to \bG (b_0)$ is a $G$-equivariant map.
A homomorphism $\phi$ of local systems induces a cochain map
\[ \phi^\#: C^p (B;\der) \longrightarrow C^p (B;\bG) \]
by
\[ (\phi^\# c)(u) = \phi (u(e_0))(c(u)). \]
Here, $c \in C^p (B;\der)$ is a cochain, $u: \Delta^p \to B$ is a
singular simplex, and
$\phi (u(e_0)): \der (u(e_0))\to \bG (u(e_0)),$
$c(u) \in \der (u(e_0)).$ This cochain map in turn induces a map
\[ \phi^\ast: H^p (B;\der) \longrightarrow H^p (B;\bG) \]
on cohomology.

For $p=0,1$ we wish to understand $\phi^\ast$ in terms of the
above identifications $\kappa^G$ and $\theta$. Let us
discuss $p=0$. The image of a $G$-invariant vector
$v\in \der (b_0)$ under $\phi (b_0): \der (b_0)\to \bG (b_0)$
is again $G$-invariant, since $\phi (b_0)$ is $G$-equivariant.
Therefore, $\phi (b_0)$ restricts to a map
\[ \phi^\ast = \phi (b_0)|: \der (b_0)^G \longrightarrow
  \bG (b_0)^G. \]
The calculation
\begin{eqnarray*}
\kappa^G (\phi^\ast v)(b) 
& = & \kappa^G (\phi (b_0)(v))(b) \\
& = & \bG (\xi_b) (\phi (b_0)(v)) \\
& = & \phi (b) (\der (\xi_b)(v)) \\
& = & \phi (b) (\kappa^G (v)(b)) \\
& = & (\phi^\# \kappa^G (v))(b)
\end{eqnarray*}
shows that
\[ \xymatrix{
\der (b_0)^G \ar[r]^{\kappa^G} \ar[d]_{\phi^\ast} &
  H^0 (B;\der) \ar[d]^{\phi^\ast} \\
\bG (b_0)^G \ar[r]^{\kappa^G} & H^0 (B;\bG)
} \]
commutes. Thus $\kappa^G$ is a natural isomorphism.
Let us turn to $p=1$ and $\theta$. Composing a derivation
$f: G\to \der (b_0)$ with $\phi (b_0): \der (b_0)\to
 \bG (b_0)$ yields a derivation $\phi (b_0)f: G\to
\bG (b_0).$ Hence, composition with $\phi (b_0)$ defines
a map 
\[ \phi^\ast: \Der_G \der (b_0) \longrightarrow
   \Der_G \bG (b_0). \]
Moreover, if $f$ is principal, say $f(g)=gv-v =
\der (g)(v)-v$ for all $g\in G,$ then
\begin{eqnarray*}
(\phi (b_0)f)(g)
& = & \phi (b_0)(\der (g)(v)-v) \\
& = & \bG (g)(w) - w
\end{eqnarray*}
with $w= \phi (b_0)(v).$ Thus $\phi^\ast (f)$ is again principal
and $\phi^\ast$ induces a map
\[ \phi^\ast: \frac{\Der_G}{\Princ_G} \der (b_0) \longrightarrow
  \frac{\Der_G}{\Princ_G} \bG (b_0). \]
For $[f] \in \Der_G \der (b_0)/ \Princ_G \der (b_0),$ we have
\begin{eqnarray*}
(\phi^\ast \theta [f])(u)
& = & \phi (b_0)(\theta [f] (u)) \\
& = & \phi (b_0)(f(\sigma^{-1}_u)) \\
& = & \theta [\phi (b_0) \circ f](u) \\
& = & (\theta \phi^\ast [f])(u),
\end{eqnarray*}
which proves that
\[ \xymatrix{
\frac{\Der_G}{\Princ_G} \der (b_0) \ar[r]^{\theta} \ar[d]_{\phi^\ast} &
  H^1 (B;\der) \ar[d]^{\phi^\ast} \\
\frac{\Der_G}{\Princ_G} \bG (b_0) \ar[r]^{\theta} & H^1 (B;\bG)
} \]
commutes. \\

Let $(F, g_F)$ be a closed, oriented, path-connected, Riemannian manifold.
Let $R$ and $S$ be commuting orientation preserving isometries of $F$,
$RS=SR,$ defining a $\intg^2$-action on $F$. We endow the real line
$\real^1$ with the canonical metric $g_1 = dt^2$ and give
$\real^1 \times F$ the product metric $g_1 + g_F$. Using $S$, we define
a diffeomorphism $\overline{S}: \real^1 \times F \to \real^1 \times F$
by $\overline{S} (t,x) = (t+1, S(x)),$ $t\in \real^1,$ $x\in F.$
This map defines a $\intg$-action on $\real^1 \times F.$ Let $E$
be the orbit space of this action. Since the action is properly discontinuous,
the quotient map $\real^1 \times F \to E$ is a regular covering map and
$E$ is a smooth manifold, the mapping torus of $S$. If $M,N$ are
Riemannian manifolds and $f: M\to M,$ $g:N\to N$ isometries, then
$f\times g: M\times M \to N\times N$ is an isometry for the product metric.
Since the translation $t\mapsto t+1$ is an isometry and $S$ is an
isometry, we conclude that $\overline{S}$ is an isometry. Thus $\intg$
acts by isometries on $\real^1 \times F$ and there exists a unique
metric $g_E$ on $E$ such that the quotient map 
$\real^1 \times F \to E$ is a local isometry. The projection
$(t,x) \mapsto t$ induces a fiber bundle projection $p: E\to S^1$
such that
\[ \xymatrix{
\real^1 \times F \ar[r] \ar[d] & E \ar[d]^p \\
\real^1 \ar[r] & \real^1 / \intg = S^1 
} \]
commutes. Using the isometry $R$, define a fiber-preserving diffeomorphism
$\overline{R}: E\to E$ by $\overline{R} [t,x] = [t, R(x)].$ This is
well-defined as $R$ and $S$ commute:
\[ \overline{R} [t+1, S(x)] = [t+1, RS(x)] = [t+1, SR(x)]
 = [t, R(x)] = \overline{R} [t,x]. \]
Moreover, $\overline{R}$ is an isometry of $(E,g_E)$, since
$(t,x) \mapsto (t, R(x))$ is an isometry of $\real^1 \times F$.
Our next goal is to describe the induced map
$\overline{R}^\ast: H^\bullet (E)\to H^\bullet (E)$ in terms of a
calculation of $H^\bullet (E)$ through the Leray-Serre spectral
sequence of $p: E\to S^1$. Let $K\subset H^q (E)$ be the subspace
\[ K = \ker (H^q (E)\longrightarrow H^q (F)). \]
Since $S$ is orientation preserving, $E$ is orientable and receives
an orientation from the canonical orientation of $\real^1$ and the
given orientation of $F$. Thus $E$ is a closed, oriented manifold
and Hodge theory applies. On the vector space $\Omega^q (E)$
of smooth $q$-forms on $E$, an inner product is given by
\[ \langle \omega, \eta \rangle = \int_E \omega \wedge 
  \ast \eta. \]
The Hodge theorem asserts that with respect to this inner product,
there is an orthogonal decomposition
\[ \Omega^q (E) = \Delta \Omega^q (E) \oplus \Harm^q (E), \]
where $\Delta$ is the Hodge Laplacian on $E$ and $\Harm^q (E)$ are
the harmonic $q$-forms. The inner product
restricts to an inner product on $\Harm^q (E)$, which yields an
inner product on $H^q (E)$ via the isomorphism
$\Harm^q (E) \stackrel{\cong}{\longrightarrow} H^q (E)$
induced by the inclusion. Let $K^\perp$ be the orthogonal
complement of $K$ in $H^q (E)$ with respect to this inner product.

\begin{lemma} \label{lem.rstarinv}
The subspaces $K, K^\perp \subset H^q (E)$ are both
$\oR^\ast$-invariant.
\end{lemma}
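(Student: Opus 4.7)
\textbf{Proof plan for Lemma \ref{lem.rstarinv}.} The plan proceeds in two steps, corresponding to the two subspaces. First, I will show $K$ is $\oR^\ast$-invariant by exploiting the fact that $\oR$ is fiber-preserving over the chosen fiber $F = \{0\}\times F \subset E$. Second, I will show that $\oR^\ast$ acts as an isometry on $H^q(E)$ with respect to the Hodge inner product, and then deduce invariance of $K^\perp$ from the invariance of $K$ by a standard linear-algebra argument.

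For the first step: by construction $p \circ \oR = p$, since $p(\oR[t,x]) = p[t,R(x)] = t = p[t,x]$. In particular, $\oR$ restricts to a self-map of the fiber over $[0]\in S^1$, and that restriction is exactly $R: F\to F$. Writing $i: F\hookrightarrow E$ for the fiber inclusion, the equality $\oR \circ i = i \circ R$ gives $i^\ast \oR^\ast = R^\ast i^\ast$ on cohomology. Hence if $\alpha \in K = \ker i^\ast$, then $i^\ast(\oR^\ast \alpha) = R^\ast(i^\ast \alpha) = 0$, so $\oR^\ast \alpha \in K$.

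For the second step I will use that $\oR$ is an isometry of $(E,g_E)$. Isometries commute with the Hodge star and with the Laplacian $\Delta$, so $\oR^\ast$ preserves $\Harm^q(E)$ and acts on it via the identification $\Harm^q(E) \cong H^q(E)$; moreover, since $\oR$ is an isometry, $\int_E \oR^\ast \omega \wedge \ast \oR^\ast \eta = \int_E \oR^\ast (\omega \wedge \ast \eta) = \int_E \omega \wedge \ast \eta$, so $\oR^\ast$ is an isometry of $\Omega^q(E)$ and hence of $H^q(E)$ in the Hodge inner product. Now if $\alpha \in K^\perp$ and $\beta \in K$, then
\[ \langle \oR^\ast \alpha, \beta\rangle = \langle \alpha, (\oR^{-1})^\ast \beta\rangle. \]
Since $\oR^{-1}$ is again a fiber-preserving isometry restricting to $R^{-1}$ on the fiber $F$, the argument of the first step applied to $\oR^{-1}$ shows that $(\oR^{-1})^\ast \beta \in K$, so the pairing vanishes and $\oR^\ast \alpha \in K^\perp$.

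There is no substantive obstacle here; the only point requiring any care is to make sure the \emph{same} fiber is used to define $K$ as is preserved by $\oR$, which is built into the definition $\oR[t,x]=[t,R(x)]$, and to recall that an isometry of a closed oriented Riemannian manifold induces a Hodge-isometry on cohomology. Both points are immediate from the setup.
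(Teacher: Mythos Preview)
Your proof is correct and follows essentially the same route as the paper's: invariance of $K$ from the commuting square $\oR\circ i = i\circ R$, orthogonality of $\oR^\ast$ on $H^q(E)$ via the Hodge inner product, and invariance of $K^\perp$ from invariance of $K$ under $(\oR^{-1})^\ast$. One small point worth tightening: the identities $\oR^\ast\circ\ast = \ast\circ\oR^\ast$ and $\int_E \oR^\ast(\omega\wedge\ast\eta)=\int_E \omega\wedge\ast\eta$ require $\oR$ to be \emph{orientation-preserving}, not merely an isometry; this holds here because $R$ is orientation-preserving by hypothesis, but you should state it explicitly.
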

\begin{proof}
Let $i:F \hookrightarrow E$ be the inclusion of the fiber over the base-point.
By the definition of $\oR$, the square
\[ \xymatrix{
F \ar[r]^R \ar@{^{(}->}[d]_i & F \ar@{^{(}->}[d]_i \\
E \ar[r]^{\oR} & E
} \]
commutes. On cohomology, it induces the commutative diagram
\[ \xymatrix{
H^q (F) & H^q (F) \ar[l]_{R^\ast} \\
H^q (E) \ar[u]^{i^\ast} & H^q (E). \ar[l]^{\oR^\ast} \ar[u]_{i^\ast}
} \] 
If $v\in K = \ker i^\ast$, then
\[ i^\ast \oR^\ast (v) = R^\ast i^\ast (v)=0. \]
Thus $\oR^\ast (v)\in K$ and $K$ is $\oR^\ast$-invariant.

In fact, $K$ is invariant under $(\oR^\ast)^{-1}$ as well: The inverse
of $\oR$ is given by $\oR^{-1} [t,x] = [t, R^{-1} (x)].$ Thus
\[ \xymatrix{
F \ar[r]^{R^{-1}} \ar@{^{(}->}[d]_i & F \ar@{^{(}->}[d]_i \\
E \ar[r]^{\oR^{-1}} & E
} \]
commutes, inducing
\[ \xymatrix{
H^q (F) & H^q (F) \ar[l]_{(R^\ast)^{-1}} \\
H^q (E) \ar[u]^{i^\ast} & H^q (E). \ar[l]^{(\oR^\ast)^{-1}} \ar[u]_{i^\ast}
} \] 
If $v\in K$, then
\[ i^\ast (\oR^\ast)^{-1} (v) = (R^\ast)^{-1} i^\ast (v)=0, \]
proving the  $(\oR^\ast)^{-1}$-invariance of $K$.

Since $\oR$ is an orientation preserving isometry, its induced map commutes with the
Hodge star, $\oR^\ast \circ \ast = \ast \circ \oR^\ast$. Thus, for harmonic forms
$\omega, \eta \in \Harm^q (E) \cong H^q (E),$
\begin{eqnarray*}
\langle \oR^\ast \omega, \oR^\ast \eta \rangle 
& = & \int_E \oR^\ast \omega \wedge \ast (\oR^\ast \eta) =
   \int_E \oR^\ast \omega \wedge \oR^\ast (\ast \eta) \\
& = & \int_E \oR^\ast (\omega \wedge \ast \eta) =
  \int_E \omega \wedge \ast \eta \\ 
& = & \langle \omega, \eta \rangle.
\end{eqnarray*}
Consequently, $\oR^\ast$ is an orthogonal transformation on $H^q (E)$, and so is
$(\oR^\ast)^{-1}$. For $v\in K$ and $w\in K^\perp,$
\[ \langle \oR^\ast w, v \rangle =
 \langle (\oR^\ast)^{-1} \oR^\ast (w), (\oR^\ast)^{-1} (v) \rangle =
  \langle w, (\oR^\ast)^{-1} (v) \rangle =0, \]
since $(\oR^\ast)^{-1} (v)\in K.$ Hence $\oR^\ast w \in K^\perp$ and
$K^\perp$ is $\oR^\ast$-invariant.
\end{proof}

By Lemma \ref{lem.rstarinv}, $\oR^\ast$ restricts to maps
\[ R^\ast_K: K\longrightarrow K,~ R^\ast_\perp: K^\perp \longrightarrow K^\perp. \]
It follows that $\oR^\ast: H^q (E)\to H^q (E)$ splits as an orthogonal sum
\begin{equation} \label{equ.rbarisrkplusrperp}
\oR^\ast = R^\ast_\perp  \oplus R^\ast_K.
\end{equation}
Orthogonal projection defines a map $H^q (E)\to K^\perp$. There is a unique
isomorphism $\tau: K^\perp \to H^q (E)/K$ such that the diagram
\[ \xymatrix{
0 \ar[r] & K \ar[r] \ar@{=}[d] & H^q (E) \ar[r] \ar@{=}[d] & K^\perp \ar[r] 
\ar@{..>}[d]^{\tau} &
  0 \\
0 \ar[r] & K \ar[r] & H^q (E) \ar[r] & H^q (E)/K \ar[r] & 0,
} \]
with exact rows, commutes. Since $K$ is $\oR^\ast$-invariant,
$\oR^\ast$ induces a map
$\oR^\ast_Q: H^q (E)/K \to H^q (E)/K.$

\begin{lemma} \label{lem.taurperprqcomm}
The diagram
\[ \xymatrix{
K^\perp \ar[r]^<<<<{\cong}_<<<<{\tau} \ar[d]_{R^\ast_\perp} &
H^q (E)/K \ar[d]^{\oR^\ast_Q} \\
K^\perp \ar[r]^<<<<{\cong}_<<<<{\tau} & H^q (E)/K
} \]
commutes.
\end{lemma}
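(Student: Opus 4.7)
The plan is to unwind the definitions of $\tau$, $R^\ast_\perp$, and $\oR^\ast_Q$, and observe that the commutativity then reduces to the fact that $\oR^\ast$ is an endomorphism of $H^q(E)$ preserving \emph{both} summands of the orthogonal decomposition $H^q(E) = K \oplus K^\perp$ already established in Lemma \ref{lem.rstarinv}.

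First I would note that, by construction of $\tau$ via the commutative ladder with exact rows, $\tau$ is nothing but the composition of the inclusion $\iota: K^\perp \hookrightarrow H^q(E)$ with the canonical projection $\pi: H^q(E) \twoheadrightarrow H^q(E)/K$. That is, for $w \in K^\perp$, $\tau(w) = \pi(\iota(w)) = w + K$. Likewise, $R^\ast_\perp$ is by definition the restriction $\oR^\ast \circ \iota$ (with image landing back in $K^\perp$ by the lemma), and $\oR^\ast_Q$ is the unique map making $\pi \circ \oR^\ast = \oR^\ast_Q \circ \pi$.

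The verification is then a one-line chase. For $w \in K^\perp$,
\[
\oR^\ast_Q(\tau(w)) = \oR^\ast_Q(\pi(\iota(w))) = \pi(\oR^\ast(\iota(w))) = \pi(\iota(R^\ast_\perp(w))) = \tau(R^\ast_\perp(w)),
\]
where the second equality uses the defining property of $\oR^\ast_Q$, and the third uses $\oR^\ast \circ \iota = \iota \circ R^\ast_\perp$, i.e. the fact (Lemma \ref{lem.rstarinv}) that $\oR^\ast$ preserves $K^\perp$ and that $R^\ast_\perp$ is by definition its restriction to that subspace.

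There is really no obstacle here; the content of the lemma sits entirely in the preceding Lemma \ref{lem.rstarinv}, whose proof requires Hodge theory to establish the $\oR^\ast$-invariance of $K^\perp$. Once invariance of both $K$ and $K^\perp$ is in hand, the present lemma is a formal consequence of the naturality of the inclusion-projection pair, and the proof amounts to writing down the displayed equation above.
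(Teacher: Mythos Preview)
Your proof is correct and follows essentially the same approach as the paper: both arguments reduce to the observation that $\tau$ agrees with the quotient map on elements of $K^\perp$ and then perform the same diagram chase using the defining property of $\oR^\ast_Q$ and the $\oR^\ast$-invariance of $K^\perp$ from Lemma~\ref{lem.rstarinv}. The only cosmetic difference is that the paper routes through the orthogonal projection $H^q(E)\to K^\perp$ (using $\tau\circ\operatorname{proj}=\operatorname{quot}$), whereas you route through the inclusion $K^\perp\hookrightarrow H^q(E)$ (using $\tau=\pi\circ\iota$); these are the same identity read in two directions.
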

\begin{proof}
The diagram
\[ \xymatrix{
K^\perp \ar[rr]^{\tau} \ar[ddd]_{R^\ast_\perp} & & 
  H^q (E)/K \ar[ddd]^{\oR^\ast_Q} \\
& H^q (E) \ar[lu]^{\operatorname{proj}}
 \ar[ru]_{\operatorname{quot}} \ar[d]^{\oR^\ast} & \\
& H^q (E) \ar[ld]_{\operatorname{proj}} 
 \ar[rd]^{\operatorname{quot}} & \\
K^\perp \ar[rr]^{\tau} & & H^q (E)/K
} \]
commutes. Given $v\in K^\perp \subset H^q (E),$
we have $\operatorname{proj}(v)=v$ and thus
$\tau (v) = \operatorname{quot}(v).$ Therefore,
\begin{eqnarray*}
\oR^\ast_Q \tau (v) 
& = & \oR^\ast_Q \operatorname{quot} (v) =
 \operatorname{quot} \oR^\ast (v) =
 \tau \circ \operatorname{proj} \circ \oR^\ast (v) \\
& = & \tau \circ R^\ast_\perp \circ \operatorname{proj} (v)
 = \tau R^\ast_\perp (v).
\end{eqnarray*}
\end{proof}
Using (\ref{equ.rbarisrkplusrperp}) and Lemma \ref{lem.taurperprqcomm},
the diagram
\[ \xymatrix{
H^q (E) \ar@{=}[r] \ar[d]_{\oR^\ast} & K^\perp \oplus K
 \ar[r]^{\tau \oplus \operatorname{id}_K}_\cong 
 \ar[d]_{R^\ast_\perp \oplus R^\ast_K} &
\frac{H^q (E)}{K} \oplus K \ar[d]^{R^\ast_Q \oplus R^\ast_K} \\
H^q (E) \ar@{=}[r] & K^\perp \oplus K
 \ar[r]^{\tau \oplus \operatorname{id}_K}_\cong 
 & \frac{H^q (E)}{K} \oplus K
} \]
commutes. We have constructed an explicit isomorphism
\[ \beta = \tau \oplus \operatorname{id}_K: H^q (E) 
\stackrel{\cong}{\longrightarrow} (H^q (E)/K) \oplus K \] such that
\[ \xymatrix{
H^q (E) \ar@{=}[r]^<<<<{\sim}_<<<<{\beta} \ar[d]_{\oR^\ast} & (H^q (E)/K)\oplus K
  \ar[d]^{R^\ast_Q \oplus R^\ast_K} \\
H^q (E) \ar@{=}[r]^<<<<{\sim}_<<<<{\beta} & (H^q (E)/K) \oplus K
} \]
commutes. Therefore, the action of $\oR^\ast$ on $H^q (E)$ is completely
determined by $R^\ast_Q$ and $R^\ast_K$. \\

The Leray-Serre spectral sequence $\{ (E^{\bullet, \bullet}_r, d_r) \}$ of a 
fibration $f: X\to B$ with fiber $F$ has the following properties. With
$B_p$ the $p$-skeleton of the CW-complex $B$,
$X_p = f^{-1} (B_p),$ and
\[ J^{p,q} = \ker (H^{p+q} (X)\longrightarrow H^{p+q} (X_{p-1})), \]
there is an isomorphism
\[ \alpha (f): J^{p,q} / J^{p+1,q-1} \cong E^{p,q}_\infty \]
and an isomorphism
\[ \gamma (f): H^p (B; \der^q (F)) \cong E^{p,q}_2, \]
where $\der^q (F)$ is the local system on $B$ induced by $f$ with 
group $H^q (F)$ over a point in $B$. Suppose
$\overline{f}: \oX \to B$ is another fibration having fiber, say, 
$\oF$, associated spectral sequence $\{ (\oE^{\bullet, \bullet}_r, 
\overline{d}_r) \}$ and filtration $\oJ^{\bullet, \bullet}$. Suppose
furthermore that $\phi: X\to \oX$ is a fiber preserving map. Then
$\phi (X_p)\subset \oX_p$ and on cohomology
$\phi^\ast (\oJ^{p,q})\subset J^{p,q}$ so that $\phi^\ast$ is
filtration preserving and induces a morphism $\phi^\ast:
\oE \to E$ of spectral sequences. A morphism of spectral sequences
induces a morphism $\phi^\ast_\infty:  \oE_\infty \to E_\infty$
such that the diagram with exact rows
\[ \xymatrix{
0 \ar[r] & \oJ^{p+1,q-1} \ar[r] \ar[d]_{\phi^\ast} &
  \oJ^{p,q} \ar[r] \ar[d]_{\phi^\ast} &
 \oE^{p,q}_\infty \ar[r] \ar[d]_{\phi^\ast_\infty} & 0 \\
0 \ar[r] & J^{p+1,q-1} \ar[r]  &
  J^{p,q} \ar[r]  & E^{p,q}_\infty \ar[r] & 0
} \]
commutes, see \cite[page 49]{mccleary}. Thus $\alpha (f)$ is natural
--- the square
\[ \xymatrix{
\oJ^{p,q} / \oJ^{p+1,q-1} \ar[r]^<<<<{\cong}_<<<<{\alpha (\overline{f})}
 \ar[d]_{\phi^\ast} & \oE^{p,q}_\infty \ar[d]^{\phi^\ast_\infty} \\
J^{p,q} / J^{p+1,q-1} \ar[r]^<<<<{\cong}_<<<<{\alpha (f)}
 & E^{p,q}_\infty 
} \]
commutes. The map $\phi$ induces furthermore a homomorphism
$\phi: \der^q (\oF)\to \der^q (F)$ of local systems, which in turn induces
a map $\phi^\ast: H^p (B; \der^q (\oF))\to H^p (B; \der^q (F)).$
By \cite[Chapter XIII, Theorem 4.9 (6)]{gwwhitehead}, $\gamma$ is
natural: The diagram
\[ \xymatrix{
H^p (B; \der^q (\oF)) \ar[r]^<<<<{\cong}_<<<<{\gamma (\overline{f})}
 \ar[d]_{\phi^\ast} & \oE^{p,q}_2 \ar[d]^{\phi^\ast_2} \\
H^p (B; \der^q (F)) \ar[r]^<<<<{\cong}_<<<<{\gamma (f)}
 & E^{p,q}_2
} \]
commutes.

Let us specialize to our bundle $E$ over the circle $B=S^1$ and the
fiber preserving map $\phi = \oR: E\to E.$ We give the circle its minimal
cell structure. Then the filtration $J$ consists of two possibly nontrivial
pieces,
\[ J^{0,q} = H^q (E),~ J^{1,q-1} =K,~ (J^{2,q-2}=0), \]
and we have
\[ E^{0,q}_\infty \cong \frac{J^{0,q}}{J^{1,q-1}} = \frac{H^q (E)}{K},~
E^{1,q-1}_\infty \cong \frac{J^{1,q-1}}{J^{2,q-2}} = K. \]
Since the base is one-dimensional, only the $p=0$ and $p=1$ columns
can contain nonzero entries. Thus the spectral sequence collapses
at $E_2$ (\emph{not} using our main Theorem \ref{thm.main1})
and $E^{p,q}_2 = E^{p,q}_\infty,$ $\phi_2 = \phi_\infty$
(i.e. $\oR_2 = \oR_\infty$). Putting the above isomorphisms
$\gamma, \alpha, \beta$ together, we obtain a commutative diagram
\[ \xymatrix{
H^0 (S^1; \der^q (F))\oplus H^1 (S^1; \der^{q-1}(F))
  \ar[r]^{\oR^\ast \oplus \oR^\ast}
  \ar[d]_{\cong}^{\gamma \oplus \gamma} &
H^0 (S^1; \der^q (F))\oplus H^1 (S^1; \der^{q-1}(F))
  \ar[d]_{\cong}^{\gamma \oplus \gamma} \\
E^{0,q}_2 \oplus E^{1,q-1}_2 
 \ar[r]^{\oR^\ast_2 \oplus \oR^\ast_2}
 \ar@{=}[d] & 
E^{0,q}_2 \oplus E^{1,q-1}_2
 \ar@{=}[d] \\
E^{0,q}_\infty \oplus E^{1,q-1}_\infty 
 \ar[r]^{\oR^\ast_\infty \oplus \oR^\ast_\infty}
  &  E^{0,q}_\infty \oplus E^{1,q-1}_\infty \\
\frac{H^q (E)}{K} \oplus K \ar[u]^{\cong}_{\alpha \oplus \alpha}
  \ar[r]^{R^\ast_Q \oplus R^\ast_K} &
\frac{H^q (E)}{K} \oplus K \ar[u]^{\cong}_{\alpha \oplus \alpha} \\
H^q (E) \ar[u]^{\cong}_{\beta} \ar[r]^{\oR^\ast} &
H^q (E). \ar[u]^{\cong}_{\beta}
} \]
Using the isomorphisms $\kappa^G$, for $G=\pi_1 (S^1)=\intg,$ and
$\theta$, we obtain a commutative diagram
\[ \xymatrix{
H^q (F)^\intg \oplus \frac{\Der_\intg}{\Princ_\intg} H^{q-1} (F)
\ar[r]^{\oR^\ast \oplus \oR^\ast}
 \ar[d]^{\kappa^\intg \oplus \theta}_{\cong} &
H^q (F)^\intg \oplus \frac{\Der_\intg}{\Princ_\intg} H^{q-1} (F)
 \ar[d]^{\kappa^\intg \oplus \theta}_{\cong} \\
H^0 (S^1; \der^q (F))\oplus H^1 (S^1; \der^{q-1}(F))
  \ar[r]^{\oR^\ast \oplus \oR^\ast} &
H^0 (S^1; \der^q (F))\oplus H^1 (S^1; \der^{q-1}(F)),
} \]
which computes the twisted cohomology terms. Appending the above
two diagrams, we have shown:

\begin{prop} \label{prop.hqe}
Let $F$ be a closed, oriented, path-connected, Riemannian manifold
and $R,S$ two commuting, orientation preserving isometries of $F$.
Let $E$ be the mapping torus of $S$ and $\oR: E\to E$ the fiber
preserving isometry induced by $R$, 
$\oR [t,x] = [t, R(x)],$ $t\in \real^1,$ $x\in F$. Then there is an
isomorphism
\[ H^q (E) \cong H^q (F)^\intg \oplus \frac{\Der_\intg}{\Princ_\intg} H^{q-1} (F), \]
which is independent of $R$, identifies $\Der_\intg H^{q-1} (F)/
\Princ_\intg H^{q-1}(F)$ with $\ker (H^q (E)\to H^q (F)),$ and makes the
diagram 
\[ \xymatrix{
H^q (E) \ar@{=}[r]^<<<<{\sim} \ar[d]_{\oR^\ast} &
H^q (F)^\intg \oplus \frac{\Der_\intg}{\Princ_\intg} H^{q-1} (F)
 \ar[d]^{\oR^\ast \oplus \oR^\ast} \\
H^q (E) \ar@{=}[r]^<<<<{\sim} &
H^q (F)^\intg \oplus \frac{\Der_\intg}{\Princ_\intg} H^{q-1} (F)
} \]
commute.
\end{prop}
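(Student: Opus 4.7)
The plan is to assemble the pieces developed in the paragraphs immediately preceding the statement. The construction of the isomorphism makes no reference to $R$: only $S$ enters into the definition of the mapping torus $E$ and of the bundle $p\colon E\to S^{1}$, so independence from $R$ is automatic once the isomorphism is constructed from spectral sequence data of $p$ alone. I will then verify the diagram commutes by tracking a fiber-preserving self-map through the same data.

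First I would apply the Leray--Serre spectral sequence of $p\colon E\to S^{1}$, giving the circle its minimal CW-structure. Since the base has dimension one, only the columns $p=0,1$ of $E_{2}^{p,q}$ can be nonzero; hence all differentials $d_{r}$ for $r\geq 2$ vanish trivially (without invoking Theorem \ref{thm.main1}), and $E_{2}=E_{\infty}$. The filtration $J^{p,q}$ has only the two possibly nontrivial pieces $J^{0,q}=H^{q}(E)$ and $J^{1,q-1}=K$, where $K=\ker(i^{\ast}\colon H^{q}(E)\to H^{q}(F))$, so $E_{\infty}^{0,q}\cong H^{q}(E)/K$ and $E_{\infty}^{1,q-1}\cong K$, via the isomorphism $\alpha(p)$. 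Composing with $\gamma(p)\colon H^{p}(S^{1};\der^{q}(F))\stackrel{\cong}{\to}E_{2}^{p,q}$ identifies $H^{q}(E)/K$ with $H^{0}(S^{1};\der^{q}(F))$ and $K$ with $H^{1}(S^{1};\der^{q-1}(F))$.

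Next I would invoke the isomorphism $\beta=\tau\oplus\mathrm{id}_{K}\colon H^{q}(E)\stackrel{\cong}{\to}H^{q}(E)/K\oplus K$ constructed in the text (using Hodge theory on the closed, oriented Riemannian manifold $E$ to split off $K^{\perp}$). Then, applying the isomorphisms $\kappa^{\intg}$ for $p=0$ and $\theta$ for $p=1$ from the generalities on low-degree local system cohomology, I obtain the chain
\[
H^{q}(E)\;\stackrel{\beta}{\cong}\;\tfrac{H^{q}(E)}{K}\oplus K\;\stackrel{\alpha\oplus\alpha}{\cong}\;E_{\infty}^{0,q}\oplus E_{\infty}^{1,q-1}\;\stackrel{\gamma\oplus\gamma}{\cong}\;H^{q}(F)^{\intg}\oplus\tfrac{\Der_{\intg}}{\Princ_{\intg}}H^{q-1}(F).
\]
By construction the summand $\Der_{\intg}H^{q-1}(F)/\Princ_{\intg}H^{q-1}(F)$ corresponds to $K$, yielding the asserted identification with $\ker(H^{q}(E)\to H^{q}(F))$. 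None of these identifications used $R$, confirming independence.

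For the equivariance diagram, apply the entire construction to the fiber-preserving isometry $\oR\colon E\to E$. Lemma \ref{lem.rstarinv} shows $\oR^{\ast}$ preserves both $K$ and $K^{\perp}$, so under $\beta$ the map $\oR^{\ast}$ becomes $R_{Q}^{\ast}\oplus R_{K}^{\ast}$ (Lemma \ref{lem.taurperprqcomm}). The naturality of $\alpha$ (filtration preservation of induced morphisms on spectral sequences) and the naturality of $\gamma$ (from \cite[Chapter XIII, Theorem 4.9 (6)]{gwwhitehead}) give commutativity of the intermediate squares. Finally, naturality of $\kappa^{G}$ and $\theta$ with respect to a morphism $\phi\colon\der\to\bG$ of local systems, verified in the text, translates the top square into the stated one, once one notes that the homomorphism $\oR\colon\der^{\bullet}(F)\to\der^{\bullet}(F)$ of local systems on $S^{1}$ is given over the base-point by $R^{\ast}\colon H^{\bullet}(F)\to H^{\bullet}(F)$. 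This is the only step requiring any real care: one must verify that the $\intg$-equivariant map on $\der^{q}(F)(b_{0})=H^{q}(F)$ induced by $\oR$ is indeed $R^{\ast}$, which follows immediately from the defining commutative square $i\circ R=\oR\circ i$ where $i\colon F\hookrightarrow E$ is the fiber inclusion over the base-point. Concatenating the commutative squares then gives the desired diagram, completing the proof.
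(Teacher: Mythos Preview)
Your proposal is correct and follows essentially the same route as the paper: you assemble the chain of isomorphisms $\beta$, $\alpha$, $\gamma$, $\kappa^{\intg}$, $\theta$ built up in the preceding paragraphs, use the collapse of the two-column spectral sequence over $S^{1}$, and then check equivariance via Lemmas~\ref{lem.rstarinv} and~\ref{lem.taurperprqcomm} together with the naturality of each link in the chain. The paper's own proof is precisely ``appending the above two diagrams, we have shown,'' which is exactly what you have spelled out.
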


With this proposition in hand, we are in a position to describe the 
recursive scheme calculating the equivariant cohomology of
isometric $\intg^n$-actions on $F$.
For notational convenience, we are content with describing the 
method for $n=3$ isometries.
It will be apparent to everyone how to proceed if $n$ is larger.
Let $F$ be a closed, oriented, path-connected, Riemannian manifold.
Let $R_1, R_2, R_3$ be commuting orientation preserving isometries of $F$, 
determining
a $\intg^3$-action on $F$. We wish to calculate the equivariant
cohomology $H^\bullet_{\intg^3} (F)$. Let $B_1$ be the Borel
construction of the $\intg$-action on $F$ by powers of $R_1$, that is,
$B_1$ is the mapping torus $B_1 = \real^1 \times_\intg F$.
The closed, oriented manifold $B_1$ is a fiber bundle over the circle
with fiber $F$. The isometry $R_2$ defines a fiber preserving isometry
$\oR_2: B_1 \to B_1$ by $\oR_2 [t,x] = [t, R_2 (x)].$
Since the monodromy of the mapping torus $B_1$ is $R_1$, the
monodromy of the induced local system $\der^q (F)$ is $R^\ast_1$.
Thus, assuming that the action of $R^\ast_1$ on $H^\bullet (F)$,
defining a $\intg$-action on $H^\bullet (F)$, as well as the action
of $R^\ast_2$ on $H^\bullet (F)$, are
known, the automorphism $\oR^\ast_2$ of $H^q (B_1)$ can be
computed, using Proposition \ref{prop.hqe}, via the diagram
\[ \xymatrix{
H^q (B_1) \ar@{=}[r]^<<<<{\sim} \ar[d]_{\oR^\ast_2} &
H^q (F)^\intg \oplus \frac{\Der_\intg}{\Princ_\intg} H^{q-1} (F)
 \ar[d]^{\oR^\ast_2 \oplus \oR^\ast_2} \\
H^q (B_1) \ar@{=}[r]^<<<<{\sim} &
H^q (F)^\intg \oplus \frac{\Der_\intg}{\Princ_\intg} H^{q-1} (F).
} \]
By the same method, the action of the automorphism induced
on $H^\bullet (B_1)$ by $[t,x] \mapsto [t, R_3 (x)]$ can be
determined.

Let $B_2$ be the Borel
construction of the $\intg$-action on $B_1$ by powers of $\oR_2$, that is,
$B_2$ is the mapping torus $B_2 = \real^1 \times_\intg B_1$.
The closed, oriented manifold $B_2$ is a fiber bundle over the circle
with fiber $B_1$. The isometry $R_3$ defines a fiber preserving isometry
$\oR_3: B_2 \to B_2$ by $\oR_3 [t_1, [t_2 ,x]] = [t_1, [t_2, R_3 (x)]].$
This is well-defined, since $R_3$ commutes with $R_1$ and $R_2$:
For integers $m$ and $n$, we have
\begin{eqnarray*}
\oR_3 [t_1 +n, \oR^n_2 [t_2 +m, R^m_1 (x)]]
& = & \oR_3 [t_1 +n, [t_2 +m, R^n_2 R^m_1 (x)]] \\
& = & [t_1 +n, [t_2 +m, R_3 R^n_2 R^m_1 (x)]] \\
& = & [t_1 +n, [t_2 +m, R^n_2 R^m_1 R_3 (x)]] \\
& = & [t_1 +n, \oR^n_2 [t_2 +m, R^m_1 R_3 (x)]] \\
& = & [t_1, [t_2, R_3 (x)]] \\
& = & \oR_3 [t_1, [t_2, x]].
\end{eqnarray*}
Using Proposition \ref{prop.hqe}, we compute
$\oR^\ast_3: H^\bullet (B_2)\to H^\bullet (B_2)$ by
\[ \xymatrix{
H^q (B_2) \ar@{=}[r]^<<<<{\sim} \ar[d]_{\oR^\ast_3} &
H^q (B_1)^\intg \oplus \frac{\Der_\intg}{\Princ_\intg} H^{q-1} (B_1)
 \ar[d]^{\oR^\ast_3 \oplus \oR^\ast_3} \\
H^q (B_2) \ar@{=}[r]^<<<<{\sim} &
H^q (B_1)^\intg \oplus \frac{\Der_\intg}{\Princ_\intg} H^{q-1} (B_1).
} \]
Here, the $\intg$-action on $H^\bullet (B_1)$ is given by powers of
$\oR^\ast_2: H^\bullet (B_1) \to H^\bullet (B_1),$ which we computed
in the previous step. Since the action of the automorphism induced
on $H^\bullet (B_1)$ by $[t,x] \mapsto [t, R_3 (x)]$ is also known from
the previous step, the right hand side of the diagram can indeed be computed.

Finally, let $B_3$ be the Borel
construction of the $\intg$-action on $B_2$ by powers of $\oR_3$, that is,
$B_3$ is the mapping torus $B_3 = \real^1 \times_\intg B_2$.

\begin{lemma} \label{lem.identifyborelconstr}
The manifold $B_3$ is diffeomorphic to the Borel construction
$F_{\intg^3} = E\intg^3 \times_{\intg^3} F = \real^3 \times_{\intg^3} F$
of the $\intg^3$-action on $F$.
\end{lemma}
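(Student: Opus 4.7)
My plan is to construct an explicit diffeomorphism by unwinding the iterated mapping torus construction and matching the resulting equivalence relation on $\real^3 \times F$ with the diagonal $\intg^3$-action. Define a smooth map $\Phi : \real^3 \times F \to B_3$ by
\[
\Phi(t_1, t_2, t_3, x) = [t_3, [t_2, [t_1, x]]].
\]
This is smooth because it is the composition of the three successive quotient maps defining $B_1, B_2, B_3$, each of which is the quotient by a smooth free $\intg$-action by covering translations.

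Next I would verify that $\Phi$ is invariant under the diagonal $\intg^3$-action
\[
(n_1, n_2, n_3) \cdot (t_1, t_2, t_3, x) = \bigl(t_1 + n_1,\, t_2 + n_2,\, t_3 + n_3,\, R_1^{n_1} R_2^{n_2} R_3^{n_3}(x)\bigr).
\]
Invariance under the $n_1$-generator is immediate from the definition of $B_1$. Invariance under the $n_2$-generator uses $\oR_2^{n_2}[t_1, x] = [t_1, R_2^{n_2}(x)]$, which holds because $R_1$ and $R_2$ commute. Invariance under the $n_3$-generator uses $\oR_3^{n_3}[t_2, [t_1, x]] = [t_2, [t_1, R_3^{n_3}(x)]]$, which is precisely the content of the displayed computation immediately preceding the lemma (and again relies on $R_3$ commuting with $R_1, R_2$). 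Hence $\Phi$ factors through a smooth map $\overline{\Phi} : \real^3 \times_{\intg^3} F \to B_3$.

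The crux is then to check that $\overline{\Phi}$ is a bijection, i.e.\ that the equivalence relation on $\real^3 \times F$ produced by iterating the three mapping torus quotients is precisely the one produced by the diagonal $\intg^3$-action. One inclusion is the invariance just verified. For the other, suppose $\Phi(t_1, t_2, t_3, x) = \Phi(t_1', t_2', t_3', x')$. Unwinding the outermost quotient first gives $t_3' = t_3 + n_3$ and $[t_2', [t_1', x']] = \oR_3^{n_3}[t_2, [t_1, x]] = [t_2, [t_1, R_3^{n_3}(x)]]$ for some $n_3 \in \intg$. Unwinding once more gives $t_2' = t_2 + n_2$ and $[t_1', x'] = \oR_2^{n_2}[t_1, R_3^{n_3}(x)] = [t_1, R_2^{n_2} R_3^{n_3}(x)]$, and a final application of the $B_1$-relation yields $t_1' = t_1 + n_1$ and $x' = R_1^{n_1} R_2^{n_2} R_3^{n_3}(x)$. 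Thus the two points lie in a common $\intg^3$-orbit.

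Finally, both source and target are smooth manifolds of the same dimension, and both quotient maps $\real^3 \times F \to \real^3 \times_{\intg^3} F$ and $\real^3 \times F \to B_3$ are smooth covering projections (the actions being free and properly discontinuous). Hence $\overline{\Phi}$, being a continuous bijection covered by $\id_{\real^3 \times F}$, is a local diffeomorphism and therefore a diffeomorphism. The only real obstacle is the bookkeeping in the previous paragraph; the commutativity of $R_1, R_2, R_3$ enters in exactly the places where successive $\oR_i^{n_i}$ need to be pushed past the inner brackets, and is what makes the three $\intg$-actions organize into a single $\intg^3$-action.
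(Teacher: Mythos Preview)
Your proof is correct and follows essentially the same approach as the paper. The paper writes down mutually inverse smooth maps $\phi: B_3 \to F_{\intg^3}$ and $\psi: F_{\intg^3}\to B_3$ directly on the quotients and checks well-definedness of each, whereas you lift everything to $\real^3 \times F$, descend a single map, and verify bijectivity plus the local-diffeomorphism property via the covering structure; the underlying computation (pushing $\oR_i^{n_i}$ past inner brackets using commutativity of the $R_i$) is identical, and your variable ordering differs from the paper's only by a harmless permutation of the $\real^3$ coordinates.
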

\begin{proof}
A smooth map $\phi: B_3 \to F_{\intg^3}$ is given by
\[ \phi [t_1, [t_2, [t_3, x]]] = [(t_1, t_2, t_3), x]. \]
This is well-defined because
\begin{eqnarray*}
\phi [t_1 + n, \oR^n_3 [t_2 + m, \oR^m_2 [t_3 +p, R^p_1 (x)]]]
& = & \phi  [t_1 + n, \oR^n_3 [t_2 + m, [t_3 +p, R^m_2 R^p_1 (x)]]] \\
& = & \phi  [t_1 + n, [t_2 + m, [t_3 +p, R^n_3 R^m_2 R^p_1 (x)]]] \\
& = & [(t_1 +n, t_2 +m, t_3 +p), R^n_3 R^m_2 R^p_1 (x)] \\
& = & [(t_1, t_2, t_3), x] \\
& = & \phi [t_1, [t_2, [t_3, x]]].
\end{eqnarray*}
A smooth map $\psi: F_{\intg^3} \to B_3$ is given by
\[ \psi [(t_1, t_2, t_3), x] = [t_1, [t_2, [t_3, x]]], \]
which is also well-defined as
\begin{eqnarray*}
\psi [(t_1 +n, t_2 +m, t_3 +p), R^n_3 R^m_2 R^p_1 (x)]
& = & [t_1 + n, [t_2 + m, [t_3 +p, R^n_3 R^m_2 R^p_1 (x)]]] \\
& = & [t_1 + n, \oR^n_3 [t_2 + m, \oR^m_2 [t_3 +p, R^p_1 (x)]]] \\
& = & [t_1, [t_2, [t_3, x]]] \\
& = & \psi [(t_1, t_2, t_3), x]. 
\end{eqnarray*}
The maps $\phi$ and $\psi$ are inverses of each other.
\end{proof}
Then the equivariant cohomology of $F$ is given by
\[ H^q_{\intg^3} (F) = H^q (F_{\intg^3}) \cong H^q (B_3) \cong
  H^q (B_2)^\intg \oplus \frac{\Der_\intg}{\Princ_\intg} H^{q-1} (B_2), \]
where the $\intg$-action on $H^\bullet (B_2)$ is given by powers
of $\oR^\ast_3$, computed in the previous step.

\providecommand{\bysame}{\leavevmode\hbox to3em{\hrulefill}\thinspace}
\providecommand{\MR}{\relax\ifhmode\unskip\space\fi MR }
\providecommand{\MRhref}[2]{%
  \href{http://www.ams.org/mathscinet-getitem?mr=#1}{#2}
}
\providecommand{\href}[2]{#2}

\end{document}